\documentclass{article}
\usepackage{graphicx} 
\usepackage{lipsum}
\usepackage{lscape}
\usepackage{amsthm}
\usepackage{amssymb}
\usepackage{MnSymbol}
\newtheorem{theorem}{Theorem}[section]
\usepackage{pst-all}
\usepackage{pstricks}    
\usepackage{graphicx} 
\usepackage{xfp}
\usepackage{tabularx}
\usepackage{multirow}
\usepackage{tikz}
\usepackage{rotating}
\usetikzlibrary{shapes.geometric}
\usepackage[utf8]{inputenc}
\usepackage{ragged2e}
\usepackage{calculator}
 \usepackage{calculus}
 \usepackage{hyperref}
 \usepackage{lscape}
\newtheorem{example}[theorem]{Example}
\newtheorem{prop}[theorem]{Proposition}
\theoremstyle{definition}
\theoremstyle{remark}
\newtheorem{remark}[theorem]{Remark}
\newtheorem{lemma}[theorem]{Lemma}
\newtheorem{definition}[theorem]{Definition}
\newtheorem{corollary}[theorem]{Corollary}
\usepackage{tkz-euclide}
\usepackage{pgfplots}

\DeclareMathOperator{\lcm}{lcm}
\usepgflibrary{shapes.geometric} 
\usepgflibrary[shapes.geometric] 
\usetikzlibrary{shapes.geometric} 
\usetikzlibrary[shapes.geometric] 
\usetikzlibrary{shapes,decorations} 
\usepackage{graphicx} 
\NewDocumentCommand{\showcalculation}{o m}{$
  \IfValueTF{#1}
    {#1}{#2} = \fpeval{#2}$}
  \renewenvironment{abstract}
 {\par\noindent\textbf{\abstractname.}\ \ignorespaces}
 {\par\medskip}
  \begin{document}
\title{Some determinants and relations in Heronian friezes}
\author{Anja \v{S}neperger}
\date{}
\newcommand{\Addresses}{{
  \bigskip
  \footnotesize
   \textsc{School of Mathematics \\ University of Leeds \\ Leeds, LS2 9JT \\ United Kingdom\\}\par\nopagebreak \noindent
  \textit{E-mail address}: \texttt{mmasn@leeds.ac.uk}}}
  \maketitle
\begin{abstract}
\noindent
   In this article, we give algebraic relations and determinant vanishing equalities that hold for the entries of a single Heronian diamond of a Heronian frieze arising from a cyclic $n$-gon. We also give algebraic relations that hold between entries of multiple adjacent diamonds of such a frieze. Furthermore, we define a plane Heronian frieze, and establish some more determinant vanishing equalities for the entries of a plane Heronian frieze arising from a cyclic $n$-gon, where $n$ is a positive integer divisible by $4$.
  \end{abstract}
\tableofcontents
\section*{Introduction} Frieze patterns of integers were introduced by Coxeter in \cite{key7}, and classified by Conway and Coxeter in \cite{key8} in terms of triangulations of polygons. Broline, Crowe and Isaacs explicitly computed the determinant of the matrix associated to a frieze in \cite{key9}. This result was generalized by Baur and Marsh \cite{key10} for friezes whose entries are cluster variables from a cluster algebra of type $A$. Half a century later, Sergey Fomin and Linus Setiabrata, motivated by computational geometry of point configurations in the Euclidean plane and by the theory of cluster algebras of type $A$, introduced Heronian friezes \cite{fs}. The main idea they used as a motivation is the fact that the measurements related to triangulations of a plane quadrilateral satisfy some equations arising from classical geometry results. One of them, which the friezes were named after, is a version of Heron's formula for the area $T$ of the triangle whose side lengths are $a,b,c$ -- namely $T = \sqrt{s(s-a)(s-b)(s-c)}$, where $s$ is the semiperimeter of the triangle. They prove that a generic Heronian frieze, just like the frieze patterns introduced by Coxeter, possesses glide symmetry. Furthermore, they establish a version of the Laurent phenomenon for these friezes. For more information on this phenomenon for cluster algebras, the reader is referred to \cite{key5}, and for similar results for Coxeter-Conway friezes, to \cite{key4}. \\
In this paper, we consider Heronian friezes arising from a polygon. We state and prove some algebraic relations, as well as determinant vanishing equalities for the entries of a Heronian frieze arising from a cyclic polygon. The paper is organized as follows. \\ In Section 1 we give an overview of some definitions and results concerning Heronian friezes from \cite{fs}. On top of that, we give a pattern of diamond labelings for a Heronian frieze arising from a polygon in Figure \ref{fig5}, and introduce a notion of a plane Heronian frieze  \eqref{def111}. \\ In Section 2 we focus on Heronian friezes arising from cyclic polygons. Using a result of Smith \cite[Section 1]{key2} we prove some algebraic relations that hold between the entries of such a frieze. Then, we use them to establish some determinant vanishing equalities for a single diamond of the frieze. For similar results on Coxeter-Conway and higher $SL_{k}$  tame frieze patterns, see \cite{key4} and  \cite{key6}. Next, we use a result by Gregorac \cite[Theorem 10]{key3} to establish further algebraic relations between entries of several adjacent diamonds of the frieze, as shown in Figure  \ref{figg,pattern}. The last result of the Section 2 states that, given a cyclic $n$-gon, where $n$ is a positive integer divisible by $4$, some $\frac{n}{2}$ x $\frac{n}{2}$ determinants of the corresponding plane Heronian frieze vanish as well. \\ For the main result of the Section 3, we use Theorem \ref{thm009}, which states that the alternating sum of the reciprocal products of certain chord lengths of a cyclic polygon equals zero.  Namely, we use the fact that the area $T$ of a triangle with side lengths $a,b,c$ and a radius of the circumscribed circle $R$ can be computed using the formula $T = \frac{abc}{4R}$, in order to transform the equality from Theorem \ref{thm009} into an equality involving entries of the Heronian frieze. We get that, given a cyclic $n$-gon, where $n>4$ is an even number, an alternating sum of the form  $\sum_{m=1}^{n}(-1)^{m+1}x(m)S(m)$ equals $0$, where $x(m)$ is a monomial in the squared lengths of the boundary edges of the polygon, and $S(m)$ is a monomial in $4$ times the signed areas of some of the triangles of the polygon. An example of the formula for $n=6$ is 
\begin{equation*}
\begin{split}
    & x_{12}x_{45}S_{234}S_{256}S_{356}S_{456} - x_{12}x_{45}S_{134}S_{156}S_{356}S_{456} + \\ & \qquad \qquad x_{12}x_{45}S_{124}S_{156}S_{256}S_{456} - x_{12}x_{45}S_{123}S_{156}S_{256}S_{356} + \\ & \qquad \qquad x_{45}x_{56}S_{123}S_{124}S_{126}S_{346} - x_{12}x_{56}S_{123}S_{145}S_{245}S_{345} = 0,
    \end{split}
\end{equation*}
where the $x_{ij}$ are squared distances between the vertices of the polygon, and $S_{ijk}$ are four times the signed areas of the corresponding triangles.
\section{Definition and some properties of Heronian friezes}
In this section, we give an overview of definitions and main results from \cite{fs}. We also give a pattern of diamond labelings for a Heronian frieze arising from a polygon, as well as define a plane polygonal Heronian frieze, which will be considered in one of the results of Section 2.
\begin{definition}\cite[Definition 2.3]{fs}\label{def:def100}
A \textit{Heronian diamond} is an ordered $10$-tuple of complex numbers $(a,b,c,d,e,f,p,q,r,s)$ satisfying the following equations:\begin{equation} p^2=H(b,c,e) \end{equation}
\begin{equation} q^2=H(a,d,e)\end{equation}
\begin{equation} r^2=H(a,f,b)\end{equation}
\begin{equation} s^2=H(c,f,d)\end{equation}
\begin{equation} r+s=p+q \end{equation}
\begin{equation} 4ef=(p+q)^2+(a-b+c-d)^2\end{equation}
\begin{equation} e(r-s)=p(a-d)+q(b-c),\end{equation} where $$
 H(x,y,z):= -x^2-y^2-z^2+2xy+2xz+2yz.$$ Instead of listing the components of a Heronian diamond as a row of $10$ numbers, we will typically arrange them in a diamond pattern as in \cite{fs} and shown in the Figure \ref{fig1}.
\begin{figure}
\begin{center}
\begin{tikzpicture}
 \node[draw=none](n1) at (0,0) {$a$};
 \node[draw=none](n2) at (1,-1) {$r$};
 \node[draw=none](n3) at (2,-2) {$f$};
 \node[draw=none](n4) at (1,-3) {$s$};
 \node[draw=none](n5) at (0,-4) {$c$};
 \node[draw=none](n6) at (-1,-3){$p$};
 \node[draw=none](n7) at (-2,-2){$e$};
 \node[draw=none](n8) at (-1,-1){$q$};
 \node[draw=none](n9) at (-2,-4){$b$};
 \node[draw=none](n10) at (2,-4){$d$};
 \node[draw=none](n11) at (-2,0){$d$};
 \node[draw=none](n12) at (2,0){$b$};
 
 \draw[-] (n1) to (n2);
 \draw[-] (n2) to (n3);
 \draw[-] (n3) to (n4);
 \draw[-] (n4) to (n5);
 \draw[-] (n5) to (n6);
 \draw[-] (n6) to (n7);
 \draw[-] (n7) to (n8);
 \draw[-] (n8) to (n1);
 \draw [ thick , densely dashed ] (n2) -- (n6);
 \draw[thick,densely dashed] (n4) -- (n8);
 \draw[thick,densely dashed] (n2) -- (n12);
 \draw[thick,densely dashed] (n6) -- (n9);
 \draw[thick,densely dashed] (n8) -- (n11);
 \draw[thick,densely dashed] (n4) -- (n10);
\end{tikzpicture}
\end{center}  
\caption {A Heronian diamond. Here, $b$ and $d$ are associated to the dashed lines extending the bimedians of the diamond. The remaining eight numbers are placed at the vertices of the diamond and at the midpoints of its sides. }
\label{fig1}
\end{figure}
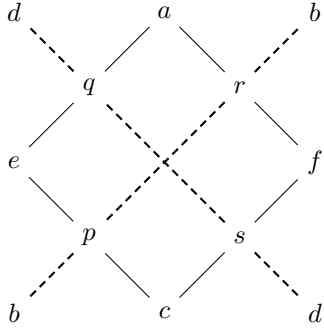
\label{fig,diamond}
\label{def}
\end{definition}
\noindent Motivated by Definition \ref{def}, in \cite{fs} the authors introduced the definition of a \textit{Heronian frieze}: roughly speaking, a Heronian frieze is a collection of complex numbers arranged in the below shown pattern, satisfying the Heronian diamond equations for all the diamonds in the pattern (plus some boundary conditions). 
\begin{center}

\includegraphics[scale=0.2]{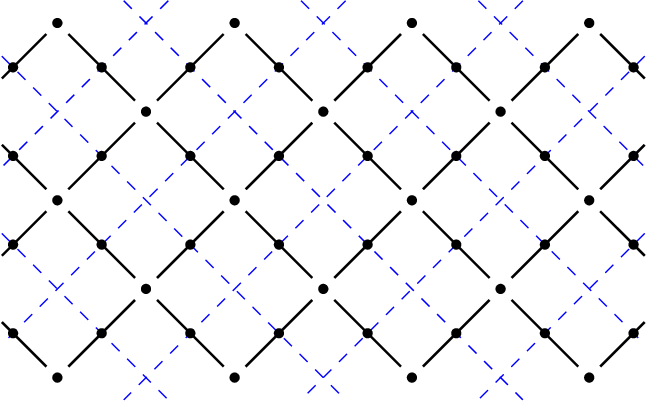}
\end{center}
Before passing on to a formal definition of Heronian frieze, let us first introduce a definition of a  labeled polygon, and some more motivation behind the actual notion of a Heronian diamond.
\begin{definition} Let $n \geq 3$ be an integer. Then a labeled \textit{polygon} (specifically an $n$-gon) in the complex plane $A$ is an ordered $n$-tuple of vertices $P = (A_1, A_2, ..., A_n) \in A^n$.
    \end{definition}
    \noindent
 Let $(A_1, A_2, ..., A_n)$ be a  labeled polygon in the complex plane. \\ Then let $x_{ij}$ denote the squared distance between points $A_i$ and $A_j$, and $S_{ijk}$ denote four times the signed area of the triangle $A_iA_jA_k$, where $i,j, k$ $\in\{1,2,...,n\}$. \\ Namely, if $A_m = (x_m,y_m)$, for $m \in \{i,j,k\}$, then (as defined in \cite{fs})
\begin{equation}
x_{ij}: = (x_j - x_i)^2 + (y_j - y_i)^2, \label{eqx}
\end{equation} 
and 
\begin{equation}S_{ijk} := 2[(x_j - x_i)(y_k - y_i) - (y_j - y_i)(x_k - x_i)]\label{eqs}.
\end{equation}
If  $i \leq 0$, $j \leq 0$, or $k \leq 0$,  we adopt a convention that $S_{ijk} = x_{ij} = 1$.
\begin{remark}
When the vertices $A_i, A_j, A_k$ are ordered anticlockwise, $S_{ijk}$ is positive, i.e. equal to four times the actual area of the triangle. Moreover, the following equalities hold in addition \cite{fs}: 
\begin{equation}
S_{ijk} = - S_{ikj} = - S_{jik} = S_{jki} = S_{kij} = - S_{kji}
\end{equation}
\label{rem13}
\end{remark}
\noindent As explained in \cite{fs}, the motivation for the definition of a Heronian diamond actually came from triangulating a quadrilateral. \\ Namely, a quadrilateral $(A_1, A_2, A_3, A_4)$ has two triangulations, involving diagonals $A_1A_3$ and $A_2A_4$, respectively. 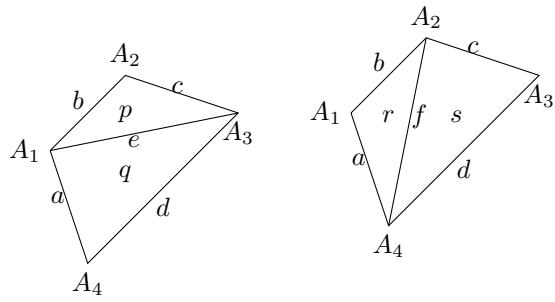
\begin{figure}
\begin{center}
\begin{tikzpicture}
 \coordinate[label=left:$A_1$] (p) at (0,0);
 \coordinate[label=above:$p$](y) at (1,0.25);
 \coordinate[label=above:$q$](h) at (1,-0.55);
 \coordinate[label=above:$A_2$] (q) at (1,1);
 \coordinate[label=below:$A_3$] (r) at (2.5,0.5);
 \coordinate[label=below:$A_4$] (s) at (0.5,-1.5);
 \tkzLabelSegment[above left = -2pt and -2 pt](p,q){$b$}
 \tkzLabelSegment[above left = -3pt and -4 pt](q,r){$c$}
 \tkzLabelSegment(r,s){$d$}
 \tkzLabelSegment[above left = -2pt and -2 pt](s,p){$a$}
 \tkzLabelSegment[below left= -2 pt and -2 pt](r,p){$e$}
 \draw (p)--(q)--(r)--(s)--(p);
 \draw (p)--(r);
\coordinate[label=left:$A_1$] (a) at (4,0.5);
 \coordinate[label=above:$A_2$] (b) at (5,1.5);
 \coordinate[label=below:$A_3$] (c) at (6.5,1);
 \coordinate[label=below:$A_4$] (d) at (4.5,-1);
 \tkzLabelSegment[above left=-2 pt and -2pt](a,b){$b$}
 \tkzLabelSegment[above left=-2 pt and -2 pt](b,c){$c$}
 \tkzLabelSegment(c,d){$d$}
 \tkzLabelSegment[above left=-2 pt and -2 pt](d,a){$a$}
 \tkzLabelSegment[above right=-2 pt and -2 pt](b,d){$f$}
 \coordinate[label=above:$r$](t) at (4.5,0.25);
 \coordinate[label=above:$s$](u) at (5.4,0.25);
 \draw (a)--(b)--(c)--(d)--(a);
 \draw (b)--(d);
\end{tikzpicture}
\end{center}
\caption{Two triangulations of a plane quadrilateral}
\label{fig2}
\end{figure} 
Figure \ref{fig2} shows these two triangulations, along with their respective measurement data:
\begin{equation} a = x_{14}, b = x_{12}, c = x_{23}, d = x_{34}, e = x_{13}, f = x_{24}, \end{equation}
\begin{equation} p = S_{123}, q = S_{134}, r = S_{124}, s = S_{234}.\end{equation}
Fomin and Setiabrata have shown \cite[Prop. 2.8]{fs} that the measurements (11) and (12) satisfy the equations (1) -- (7). Hence, the measurements related to the two triangulations of a $4$-gon give rise to a Heronian diamond, and represent a geometrical motivation for introducing this notion in general. Using this fact and previously introduced notation for squared distances and four times signed areas, a Heronian diamond for a quadruple of vertices $A_i, A_j, A_k, A_l$ can be represented as in Figure \ref{fig3}.
 
 \begin{figure}
\begin{center}
\begin{tikzpicture}
 \node[draw=none](n1) at (0,0) {$x_{il}$};
 \node[draw=none](n2) at (1,-1) {$S_{ijl}$};
 \node[draw=none](n3) at (2,-2) {$x_{jl}$};
 \node[draw=none](n4) at (1,-3) {$S_{jkl}$};
 \node[draw=none](n5) at (0,-4) {$x_{jk}$};
 \coordinate[label=${\mathbf{ijkl}}$] (c1) at (0,-2.4);
 \node[draw=none](n6) at (-1,-3){$S_{ijk}$};
 \node[draw=none](n7) at (-2,-2){$x_{ik}$};
 \node[draw=none](n8) at (-1,-1){$S_{ikl}$};
 \node[draw=none](n9) at (-2,-4){$x_{ij}$};
 \node[draw=none](n10) at (2,-4){$x_{kl}$};
 \node[draw=none](n11) at (-2,0){$x_{kl}$};
 \node[draw=none](n12) at (2,0){$x_{ij}$};
 
 \draw[-] (n1) to (n2);
 \draw[-] (n2) to (n3);
 \draw[-] (n3) to (n4);
 \draw[-] (n4) to (n5);
 \draw[-] (n5) to (n6);
 \draw[-] (n6) to (n7);
 \draw[-] (n7) to (n8);
 \draw[-] (n8) to (n1);
 \draw [densely dashed ] (n2) -- (n6);
 \draw[densely dashed] (n4) -- (n8);
 \draw[densely dashed] (n2) -- (n12);
 \draw[densely dashed] (n6) -- (n9);
 \draw[densely dashed] (n8) -- (n11);
 \draw[densely dashed] (n4) -- (n10);
\end{tikzpicture}
\end{center} 
\caption{A Heronian diamond for a quadruple of vertices $A_i, A_j, A_k, A_l$}
\label{fig3}
\end{figure}
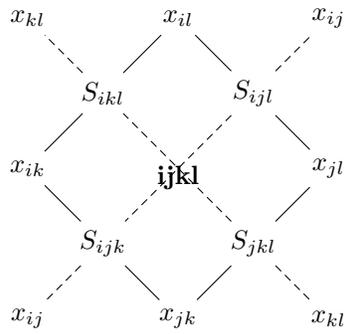 
\noindent Motivated by Definition ~\ref{def:def100}, Fomin and Setiabrata also introduce a notion of a \textit{Heronian frieze}. Here, we give a  slightly simplified version of the definition given in \cite{fs}.
\begin{definition}
A \textit{Heronian frieze of order $n$} is a collection of complex numbers arranged in a pattern shown in Figure \ref{friz}, such that every diamond of the pattern is Heronian. In addition, we impose the boundary conditions:
\begin{equation}
    z_{ii} = \tilde{z}_{iji} = \tilde{z}_{iij} = \tilde{z}_{ijj} = 0,
    \label{bound}
\end{equation}
for $i,j$ distinct elements of $\{1,2,...,n\}$.
\label{frizz}
 \end{definition}
 If all the entries associated to the dashed lines of the Heronian frieze of order $n$ are equal, i.e. $z_{12} = z_{23} = z_{34} = ... = z_{(n-1)n} = z_{n1}$, we say that the frieze is \textit{equilateral} \cite{fs}.
\begin{remark}Note that every entry of a Heronian frieze of order $n$ is in one of the forms $z_{ij}$, $\tilde{z}_{i,i+1,j}$, or $\tilde{z}_{i,j,j+1}$, where $i, j \in \{1,2,...,n\}$, and addition is modulo $n$.
\label{napomena}
\end{remark}
\begin{figure}
\begin{center}
\begin{tikzpicture}[xscale=1,yscale=1.3]
  \node[red,very thick](n1) at (4,4) {$z_{11}$};
   \node[blue,very thick](n2) at (4.5,3.5) {$\tilde{z}_{121}$};
   \node[draw=none] (n123098786787) at (2,3.5) {$\ddots$};
   \node[draw=none] (n51621712) at (5.2,4.5) {$z_{12}$};
    \draw[densely dashed] (n2) -- (n51621712);
   \node[draw=none] (n51621712) at (4.5,4.5) {$z_{12}$};

   \node[draw=none] (n5262722) at (7.5,4.5) {$\cdots$};
    \node[red,very thick](n3) at (5,3) {$z_{21}$};
    \node[blue,very thick](n5) at (5.5,2.5) {$\tilde{z}_{231}$};
    \node[draw=none] (n123787665) at (3,2.5) {$\ddots$};
    \node[red,very thick](n4) at (6,2) {$z_{31}$};
    \node[blue,very thick](n5) at (6.5,1.5) {$\tilde{z}_{341}$};
    \node[red,very thick](n9) at (8,0) {$z_{n1}$};
    \node[draw=none](n12) at (7,1) {$\ddots$};
    \node[draw=none](n314151) at (11,1.5) {$\ddots$};
    \node[red,very thick](n25151) at (9.5,2){$\cdots$};
    \node[red,very thick](n25151) at (11.5,0){$\cdots$};
    \node[blue,very thick](n10) at (7.5,0.5) {$\tilde{z}_{(n-1)n1}$};
    \node[draw=none] (n1230987) at (5,0.5) {$\ddots$};
    \node[blue,very thick](n7) at (8.5,-0.5) {$\tilde{z}_{n11}$};
    \node[draw=none] (n4155116) at (7.5, -1.5) {$z_{n1}$};
     \draw[densely dashed] (n4155116) -- (n7);
    \node[red,very thick](n8) at (9,-1) {$z_{11}$};
    \node[draw=none] (n14151) at (9.7,-1.5) {$z_{12}$};
   
    \node[red,very thick](n13) at (6,4) {$z_{22}$};
    \node[red,very thick](n805439) at (8,4) {$\cdots$};
    \node[blue,very thick](n8043879) at (8.5,3.5) {$\cdots$};
    \node[blue,very thick](n17) at (6.5,3.5) {$\tilde{z}_{232}$};

    \node[blue,very thick](n14) at (10,1) {$\ddots$};

    \node[red,very thick](n14) at (7,3) {$z_{32}$};
    \node[blue,very thick](n87) at (7.5,2.5) {$\tilde{z}_{342}$};
  
    \node[red,very thick](n8059) at (10,4) {$z_{nn}$};
    \node[red,very thick](n1) at (12,4) {$z_{11}$};
   \node[blue,very thick](n2) at (12.5,3.5) {$\tilde{z}_{121}$};
    \node[red,very thick](n3) at (13,3) {$z_{21}$};
    \node[blue,very thick](n5) at (13.5,2.5) {$\tilde{z}_{231}$};
    \node[red,very thick](n4) at (14,2) {$z_{31}$};
    \node[blue,very thick](n5) at (14.5,1.5) {$\tilde{z}_{341}$}; 
    \node[draw=none] (n1230987) at (15,3.5) {$\ddots$};
    \node[draw=none] (n1230988767) at (16,2.5) {$\ddots$};
    \node[draw=none] (n1230987) at (18,0.5) {$\ddots$};
    \node[blue,very thick](n8049) at (10.5,3.5) {$\tilde{z}_{\tiny n1n}$};
    \node[draw=none] (n151661) at (11.4,4.5) {$z_{n1}$};
     \draw[densely dashed] (n151661) -- (n8049);
    \node[draw=none] (n16161717) at (13.6,4.5) {$z_{12}$};
     \draw[densely dashed] (n16161717) -- (n2);
    \node[draw=none] (n151661) at (10.6,4.5) {$z_{n1}$};
    \node[blue,very thick] (n1230987) at (11.5,3.5) {$\tilde{z}_{1n1}$};
       \draw[densely dashed] (n151661) -- (n1230987);
    \node[blue,very thick] (n123098877) at (12.5,2.5) {$\tilde{z}_{2n1}$};
       \draw[densely dashed] (n1230987) -- (n123098877);
    \node[blue,very thick] (n1230976487) at (13.5,1.5) {$\tilde{z}_{3n1}$};
       \draw[densely dashed] (n123098877) -- (n1230976487);
    \node[draw=none] (n123098721) at (15,1) {$\ddots$};
    
    \node[red,very thick](n809) at (11,3) {$z_{1n}$};
    \node[blue,very thick](n80129) at (11.5,2.5) {$\tilde{z}_{12n}$};
     \draw[densely dashed] (n80129) -- (n2);
    \node[red,very thick](n805649) at (12,2) {$z_{2n}$};
    \node[blue,very thick](n555) at (12.5,1.5) {$\tilde{z}_{23n}$};
    \node[draw=none](n4532) at (13,1) {$\ddots$};
    \node[blue,very thick](n809) at (13.5,0.5) {$\tilde{z}_{(n-2)(n-1)n}$};
    \node[blue,very thick] (n1230987) at (15.5,0.5) {$\tilde{z}_{(n-1)n1}$};
    \node[red,very thick] (n1230987) at (16,0) {$z_{n1}$};
    \node[blue,very thick] (n1230987) at (16.5,-0.5) {$\tilde{z}_{n11}$};
    \node[red,very thick] (n1230987647) at (17,-1) {$z_{11}$};
    
    \node[red,very thick](n8077) at (14,0) {$z_{(n-1)n}$};
    \node[blue,very thick](n80769) at (14.5,-0.5) {$\tilde{z}_{(n-1)nn}$};
     \node[blue,very thick] (n1230976287) at (15.5,-0.5) {$\tilde{z}_{nn1}$};
    \node[draw=none] (n51516) at (16.4,-1.5)  {$z_{n1}$};
    \node[draw=none] (n15627171) at (18.5,-1.5) {$\cdots$};
\node[draw=none] (n51516543) at (15.7,-1.5)  {$z_{n1}$};
 \draw[densely dashed] (n1230987) -- (n51516543);
   \draw[densely dashed] (n1230976287) -- (n51516);
    \node[red,very thick](n809) at (15,-1) {$z_{nn}$};
    
    \node[red,very thick](n14) at (8,2) {$z_{42}$};
    \node[blue,very thick](n14) at (8.5,1.5) {$\tilde{z}_{452}$};
    \node[draw=none](n22) at (9,1) {$\ddots$};
    \node[blue,very thick](n24) at (9.5,0.5) {$\tilde{z}_{n12}$};
    \draw[densely dashed] (n7) -- (n24); 
    \node[red,very thick](n65) at (10,0) {$z_{12}$};
    \node[blue,very thick](n32) at (10.5,-0.5) {$\tilde{z}_{122}$};
     \draw[densely dashed] (n14151) -- (n32);
    \node[red,very thick](n90) at (11,-1) {$z_{22}$};
    \node[blue,very thick](n90) at (5.5,3.5) {$\tilde{z}_{212}$};
       \draw[densely dashed] (n51621712) -- (n90);
    \node[blue,very thick](n19) at (6.5,2.5) {$\tilde{z}_{312}$};
    \draw[densely dashed](n90)--(n19);
    \node[blue,very thick](n14) at (7.5,1.5) {$\tilde{z}_{412}$};
    \draw[densely dashed](n14)--(n19);
    \node[blue,very thick](n14) at (8,1) {$\ddots$};
    \node[blue,very thick](n18) at (8.5,0.5) {$\tilde{z}_{n12}$};
    \node[blue,very thick](n99) at (9.5,-0.5) {$\tilde{z}_{112}$};
    \draw[densely dashed](n18)--(n99);
    \node[draw=none] (n151616) at (10.4,-1.5) {$z_{12}$};
    \draw[densely dashed](n99)--(n151616);
    \node[draw=none] (n92592962) at (13,-1.5) {$\cdots$};
    \node[red,very thick](n14) at (7,3) {$z_{32}$};
\end{tikzpicture}
\end{center}
\caption{Heronian frieze of order $n$}
\label{friz}
\end{figure}
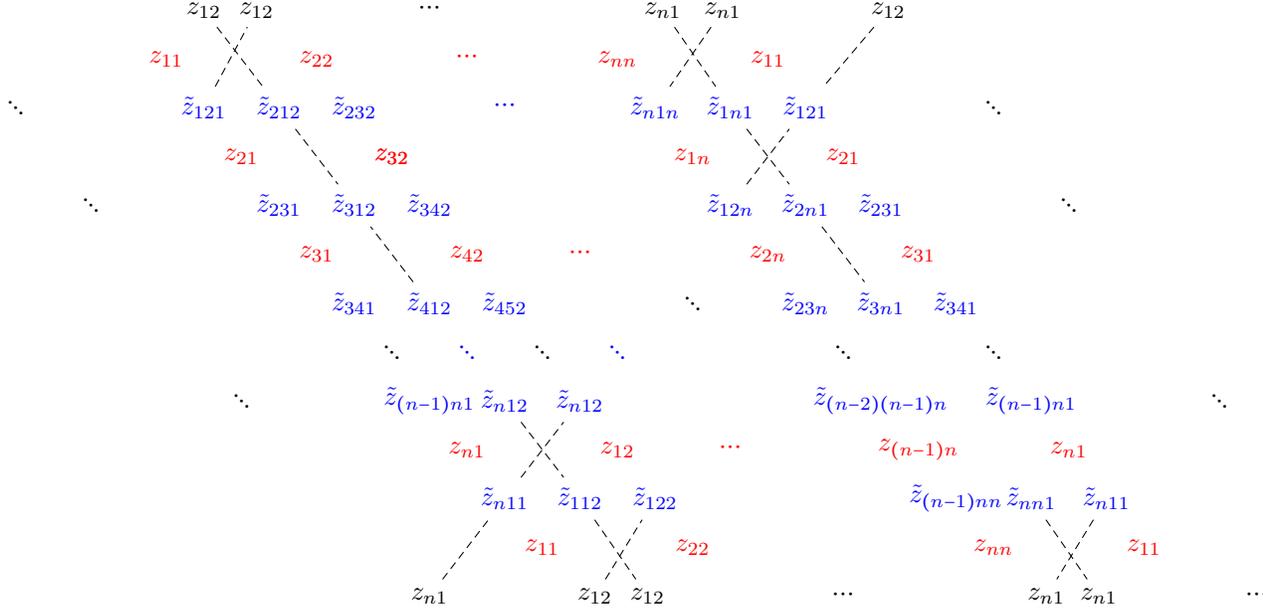

Using the fact that measurements related to triangulations of a quadrilateral give rise to a Heronian diamond, the authors of \cite{fs} also state the following.
\begin{prop}\cite{fs}
 Any $n$-gon $P$ in the complex plane gives rise to a Heronian frieze of order $n$, as given in the Figure \ref{friz}, in the following way:
\begin{equation*}
    z_{ij}:= x_{ij},
    \end{equation*}
    \begin{equation*}
    \tilde{z}_{i,i+1,j}:= S_{i,i+1,j},
\end{equation*}
\begin{equation*}
    \tilde{z}_{i,j,j+1}:= S_{i,j,j+1},
\end{equation*}
where $i, j \in \{1,2,...,n\}$, addition is modulo $n$, and the $x_{ij}$, $S_{i,i+1,j}$ and $S_{i,j,j+1}$ are as in \eqref{eqx} and \eqref{eqs}.\\
(Boundary conditions \eqref{bound} hold since the squared distance between a vertex and itself equals zero. Similarly, the signed area of a triangle with two equal vertices is zero.)
\label{prop15}
\end{prop}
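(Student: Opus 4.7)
The plan is to reduce the claim to two ingredients: (a) the already-established fact from \cite[Prop. 2.8]{fs} that the measurements (11)--(12) attached to any quadrilateral $(A_i,A_j,A_k,A_l)$ form a Heronian diamond, i.e.\ satisfy equations (1)--(7); and (b) the observation that the boundary conditions \eqref{bound} are forced by degenerate cases of the definitions \eqref{eqx} and \eqref{eqs}. Once these two pieces are in hand, verifying that the prescribed assignment defines a Heronian frieze amounts to a bookkeeping check comparing the labels in Figure \ref{friz} with those in the abstract diamond of Figure \ref{fig3}.

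First, I would make the correspondence between the diamonds of Figure \ref{friz} and quadruples of polygon vertices completely explicit. Reading off the pattern, each elementary diamond in the frieze is centered at some entry $z_{ik}$ and involves two neighboring chord-squared labels together with the four signed-area labels $\tilde z_{i,i+1,k}$, $\tilde z_{i,k,k+1}$, and the analogues for the shifted indices $i+1$, $k+1$. I would identify the quadruple $(A_i,A_{i+1},A_k,A_{k+1})$ as the vertices of this diamond, and match each of the ten positions $(a,b,c,d,e,f,p,q,r,s)$ of Definition \ref{def:def100} with the corresponding $x_{\bullet\bullet}$ or $S_{\bullet\bullet\bullet}$ under the convention that the two diagonals of the quadrilateral are $e$ and $f$, and that $p,q,r,s$ are the signed areas of the four triangles cut off by these diagonals. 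With this dictionary, equations (1)--(7) for the diamond are precisely the statement of \cite[Prop. 2.8]{fs} applied to the quadrilateral $(A_i,A_{i+1},A_k,A_{k+1})$, and therefore hold automatically.

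For the boundary conditions \eqref{bound}, I would simply observe that $x_{ii}=(x_i-x_i)^2+(y_i-y_i)^2=0$, and that each of $S_{iji}$, $S_{iij}$, $S_{ijj}$ has two equal vertices, hence vanishes by the antisymmetry relations recorded in Remark \ref{rem13}. This shows that $z_{ii}=\tilde z_{iji}=\tilde z_{iij}=\tilde z_{ijj}=0$, which are exactly the required boundary values.

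The main obstacle I expect is not mathematical content but notational: one has to make sure that the cyclic/modular indexing in Figure \ref{friz} (with addition modulo $n$) is consistent with the fixed role that each of $a,b,c,d,e,f,p,q,r,s$ plays in Definition \ref{def:def100}, and in particular that the sign conventions for signed areas in Remark \ref{rem13} match the orientations induced on the four sub-triangles of the quadrilateral used in \cite[Prop. 2.8]{fs}. Once this is checked once for a generic diamond, the same verification applies uniformly to every diamond of the frieze, and the proof is complete.
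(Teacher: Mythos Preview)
The paper does not give its own proof of this proposition: it is quoted from \cite{fs}, and the only justification appearing in the text is the parenthetical remark (already contained in the statement) that the boundary conditions \eqref{bound} hold because $x_{ii}=0$ and $S_{ijk}=0$ whenever two indices coincide. Your outline is correct and is exactly the natural verification one would supply---identify each diamond of the frieze with a quadruple $(A_i,A_{i+1},A_k,A_{k+1})$ (this is precisely what the paper records later in Remark \ref{rem110}), invoke \cite[Prop.~2.8]{fs} to get equations (1)--(7), and check the boundary conditions from the definitions---so there is nothing to compare against and no gap to flag.
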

\begin{definition}
 Given an $n$-gon $P = (A_1,A_2,...,A_n)$, we define a \textit{polygonal Heronian frieze of order $n$} to be a Heronian frieze as in Proposition \ref{prop15}. Such a frieze is shown in the Figure \ref{fig6}.
\label{def16}
\end{definition}
\begin{remark}
Every diamond of a polygonal Heronian frieze corresponds to a choice of a quadruple of, not necessarily distinct, vertices of a corresponding $n$-gon. The diamonds of the first and the last row of the frieze correspond to quadruples where two of the four vertices coincide. In accordance with Figure \ref{fig3}, we give the pattern of diamond labels of a polygonal Heronian frieze of order $n$ in  Figure \ref{fig5}. 
\end{remark}
\begin{sidewaysfigure}
    \centering
\begin{tikzpicture}
    \node[diamond,draw,minimum width = 3.5cm,
    minimum height = 4cm] (d1) at (1.25,-1.5) {\tiny $12n1$};
    \node[diamond,draw,minimum width = 3.5cm,
    minimum height = 4cm] (d2) at (3,-3.5) {\tiny $23n1$};
     \node[draw=none] (n12378732665) at (0.5,-4) {$\cdots$};
    \node[draw=none] (n123787665) at (5,-5.5) {$\ddots$};
    \node[draw=none] (n123787665) at (4.3,-4.7) {$\ddots$};
     \node[diamond,draw,minimum width = 3.5cm,
    minimum height = 4cm] (d3) at (6.5,-7.5) {\tiny $(n-2)(n-1)n1$};
     \node[draw=none] (n123787665) at (3,-8) {$\cdots$};
   \node[diamond,draw,minimum width = 3.5cm,
    minimum height = 4cm] (d4) at (8.25,-9.5) {\tiny $(n-1)nn1$}; 
   \node[diamond,draw,minimum width = 3.5cm,
    minimum height = 4cm] (d5) at (4.75,-1.5) {\tiny $2312$};  
     \node[draw=none] (n12378766576) at (7.75,-2) {$\cdots$};
    \node[diamond,draw,minimum width = 3.5cm,
    minimum height = 4cm] (d6) at (6.5,-3.5) {\tiny $3412$};
     \node[draw=none] (n123787665) at (9.3,-4) {$\cdots$};
     \node[draw=none] (n1237874665) at (8.5,-5.5) {$\ddots$};   
     
    \node[diamond,draw,minimum width = 3.5cm,
    minimum height = 4cm] (d8) at (10,-7.5) {\tiny $(n-1)n12$}; 
     \node[diamond,draw,minimum width = 3.5cm,
    minimum height = 4cm] (d9) at (11.75,-9.5) {\tiny $n112$}; 
  \node[draw=none] (n12378766578) at (14.75,-10) {$\cdots$};   
  \node[diamond,draw,minimum width = 3.5cm,
    minimum height = 4cm] (d10) at (10.25,-1) {\tiny $n1(n-1)n$};
    \node[diamond,draw,minimum width = 3.5cm,
    minimum height = 4cm] (d11) at (12,-3) {\tiny $12(n-1)n$};
    \node[draw=none] (n625627) at (14,-5.5) {$\ddots$};
    \node[diamond,draw,minimum width = 3.5cm,
    minimum height = 4cm] (d16) at (15.5,-7) {\tiny $(n-3)(n-2)(n-1)n$};
     \node[diamond,draw,minimum width = 3.5cm,
    minimum height = 4cm] (d19) at (17.25,-9) {\tiny $(n-2)(n-1)(n-1)n$};
    \node[diamond,draw,minimum width = 3.5cm,
    minimum height = 4cm] (d106) at (13.75,-1) {\tiny $12n1$};
    \node[diamond,draw,minimum width = 3.5cm,
    minimum height = 4cm] (d107) at (15.5,-3) {\tiny $23n1$};
    \node[draw=none] (n123787665788) at (18.3,-2.5) {$\cdots$};  
    \node[draw=none] (n123787466576) at (17.5,-5) {$\ddots$};   
   \node[diamond,draw,minimum width = 3.5cm,
    minimum height = 4cm] (d107) at (19,-7) {\tiny $(n-2)(n-1)n1$}; 
    \node[draw=none] (n1237876566578) at (22.3,-8) {$\cdots$};  
    \node[diamond,draw,minimum width = 3.5cm,
    minimum height = 4cm] (d1077) at (20.75,-9) {\tiny $(n-1)nn1$};
\end{tikzpicture}
\caption{Diamond labeling pattern for a polygonal Heronian frieze of order $n$}
\label{fig5}
\end{sidewaysfigure}
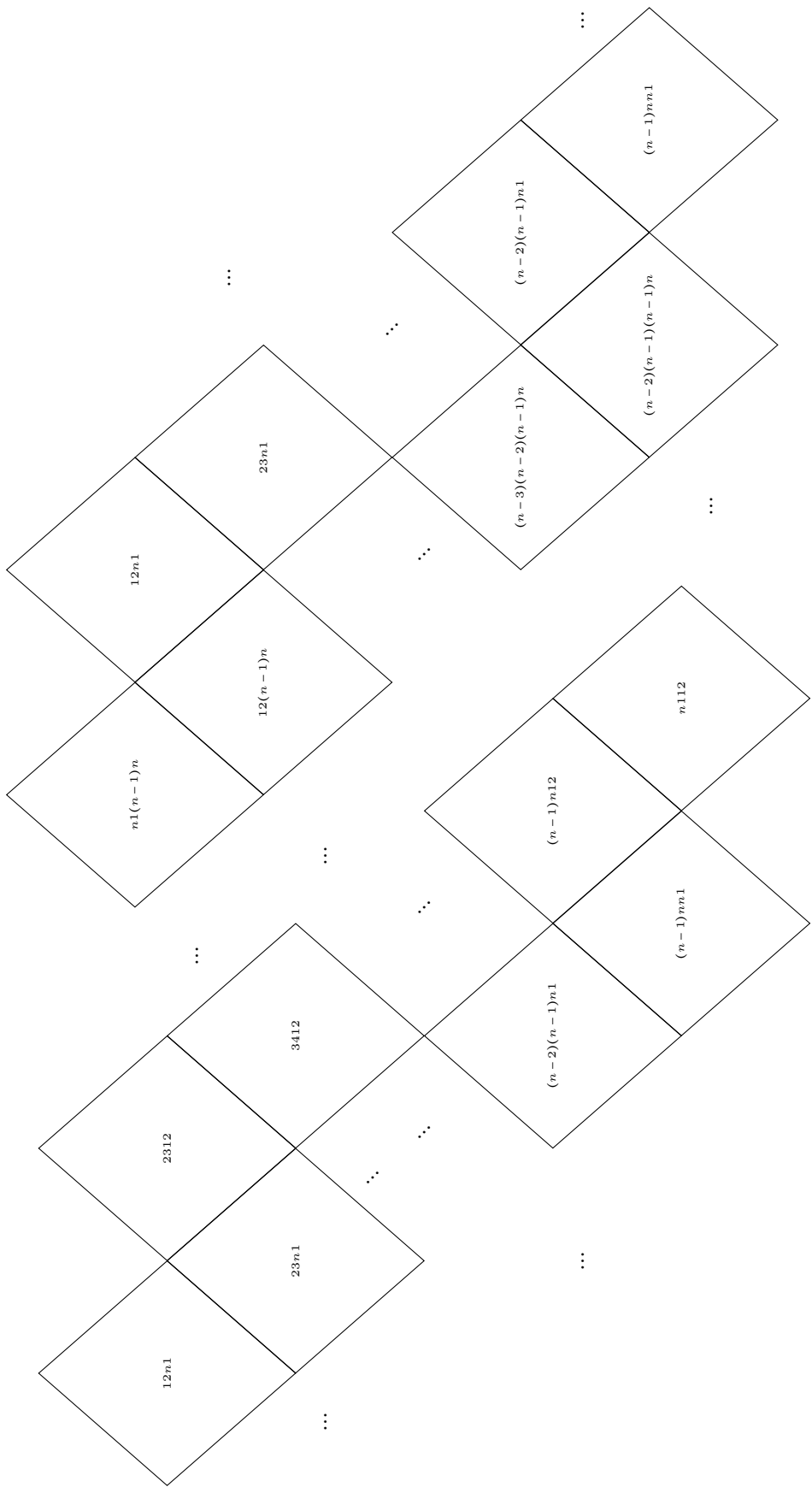
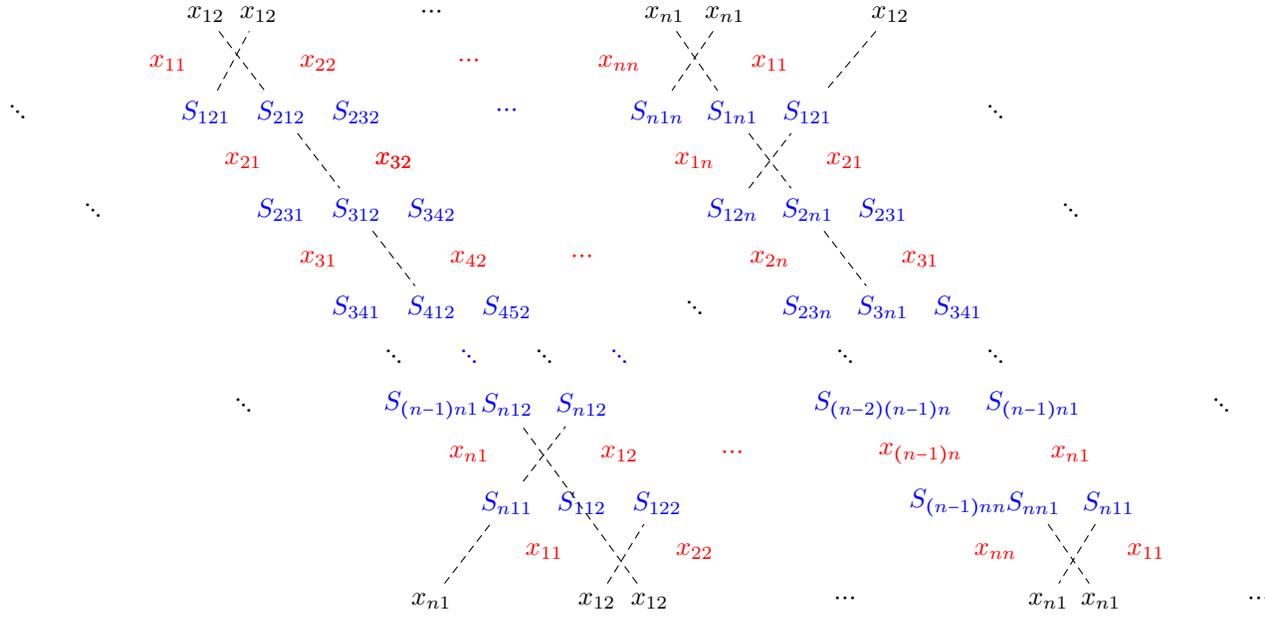
\begin{figure}
\begin{center}
\begin{tikzpicture}[xscale=1,yscale=1.3]
  \node[red,very thick](n1) at (4,4) {$x_{11}$};
   \node[blue,very thick](n2) at (4.5,3.5) {$S_{121}$};
   \node[draw=none] (n123098786787) at (2,3.5) {$\ddots$};
   \node[draw=none] (n51621712) at (5.2,4.5) {$x_{12}$};
    \draw[densely dashed] (n2) -- (n51621712);
   \node[draw=none] (n51621712) at (4.5,4.5) {$x_{12}$};

   \node[draw=none] (n5262722) at (7.5,4.5) {$\cdots$};
    \node[red,very thick](n3) at (5,3) {$x_{21}$};
    \node[blue,very thick](n5) at (5.5,2.5) {$S_{231}$};
    \node[draw=none] (n123787665) at (3,2.5) {$\ddots$};
    \node[red,very thick](n4) at (6,2) {$x_{31}$};
    \node[blue,very thick](n5) at (6.5,1.5) {$S_{341}$};
    \node[red,very thick](n9) at (8,0) {$x_{n1}$};
    \node[draw=none](n12) at (7,1) {$\ddots$};
    \node[draw=none](n314151) at (11,1.5) {$\ddots$};
    \node[red,very thick](n25151) at (9.5,2){$\cdots$};
    \node[red,very thick](n25151) at (11.5,0){$\cdots$};
    \node[blue,very thick](n10) at (7.5,0.5) {$S_{(n-1)n1}$};
    \node[draw=none] (n1230987) at (5,0.5) {$\ddots$};
    \node[blue,very thick](n7) at (8.5,-0.5) {$S_{n11}$};
    \node[draw=none] (n4155116) at (7.5, -1.5) {$x_{n1}$};
     \draw[densely dashed] (n4155116) -- (n7);
    \node[red,very thick](n8) at (9,-1) {$x_{11}$};

    \node[draw=none] (n14151) at (9.7,-1.5) {$x_{12}$};
   
    \node[red,very thick](n13) at (6,4) {$x_{22}$};
    \node[red,very thick](n805439) at (8,4) {$\cdots$};
    \node[blue,very thick](n8043879) at (8.5,3.5) {$\cdots$};
    \node[blue,very thick](n17) at (6.5,3.5) {$S_{232}$};

    \node[blue,very thick](n14) at (10,1) {$\ddots$};

    \node[red,very thick](n14) at (7,3) {$x_{32}$};
    \node[blue,very thick](n87) at (7.5,2.5) {$S_{342}$};
  
    \node[red,very thick](n8059) at (10,4) {$x_{nn}$};
    \node[red,very thick](n1) at (12,4) {$x_{11}$};
   \node[blue,very thick](n2) at (12.5,3.5) {$S_{121}$};
    \node[red,very thick](n3) at (13,3) {$x_{21}$};
    \node[blue,very thick](n5) at (13.5,2.5) {$S_{231}$};
    \node[red,very thick](n4) at (14,2) {$x_{31}$};
    \node[blue,very thick](n5) at (14.5,1.5) {$S_{341}$}; 
    \node[draw=none] (n1230987) at (15,3.5) {$\ddots$};
    \node[draw=none] (n1230988767) at (16,2.5) {$\ddots$};
    \node[draw=none] (n1230987) at (18,0.5) {$\ddots$};
    \node[blue,very thick](n8049) at (10.5,3.5) {$S_{\tiny n1n}$};
    \node[draw=none] (n151661) at (11.4,4.5) {$x_{n1}$};
     \draw[densely dashed] (n151661) -- (n8049);
    \node[draw=none] (n16161717) at (13.6,4.5) {$x_{12}$};
     \draw[densely dashed] (n16161717) -- (n2);
    \node[draw=none] (n151661) at (10.6,4.5) {$x_{n1}$};
    \node[blue,very thick] (n1230987) at (11.5,3.5) {$S_{1n1}$};
       \draw[densely dashed] (n151661) -- (n1230987);
    \node[blue,very thick] (n123098877) at (12.5,2.5) {$S_{2n1}$};
       \draw[densely dashed] (n1230987) -- (n123098877);
    \node[blue,very thick] (n1230976487) at (13.5,1.5) {$S_{3n1}$};
       \draw[densely dashed] (n123098877) -- (n1230976487);
    \node[draw=none] (n123098721) at (15,1) {$\ddots$};
    
    \node[red,very thick](n809) at (11,3) {$x_{1n}$};
    \node[blue,very thick](n80129) at (11.5,2.5) {$S_{12n}$};
     \draw[densely dashed] (n80129) -- (n2);
    \node[red,very thick](n805649) at (12,2) {$x_{2n}$};
    \node[blue,very thick](n555) at (12.5,1.5) {$S_{23n}$};
    \node[draw=none](n4532) at (13,1) {$\ddots$};
    \node[blue,very thick](n809) at (13.5,0.5) {$S_{(n-2)(n-1)n}$};
    \node[blue,very thick] (n1230987) at (15.5,0.5) {$S_{(n-1)n1}$};
    \node[red,very thick] (n1230987) at (16,0) {$x_{n1}$};
    \node[blue,very thick] (n1230987) at (16.5,-0.5) {$S_{n11}$};
    \node[red,very thick] (n1230987647) at (17,-1) {$x_{11}$};
    
    \node[red,very thick](n8077) at (14,0) {$x_{(n-1)n}$};
    \node[blue,very thick](n80769) at (14.5,-0.5) {$S_{(n-1)nn}$};
     \node[blue,very thick] (n1230976287) at (15.5,-0.5) {$S_{nn1}$};
    \node[draw=none] (n51516) at (16.4,-1.5)  {$x_{n1}$};
    \node[draw=none] (n15627171) at (18.5,-1.5) {$\cdots$};
\node[draw=none] (n51516543) at (15.7,-1.5)  {$x_{n1}$};
 \draw[densely dashed] (n1230987) -- (n51516543);
   \draw[densely dashed] (n1230976287) -- (n51516);
    \node[red,very thick](n809) at (15,-1) {$x_{nn}$};
    
    \node[red,very thick](n14) at (8,2) {$x_{42}$};
    \node[blue,very thick](n14) at (8.5,1.5) {$S_{452}$};
    \node[draw=none](n22) at (9,1) {$\ddots$};
    \node[blue,very thick](n24) at (9.5,0.5) {$S_{n12}$};
    \draw[densely dashed] (n7) -- (n24); 
    \node[red,very thick](n65) at (10,0) {$x_{12}$};
    \node[blue,very thick](n32) at (10.5,-0.5) {$S_{122}$};
     \draw[densely dashed] (n14151) -- (n32);
    \node[red,very thick](n90) at (11,-1) {$x_{22}$};
    \node[blue,very thick](n90) at (5.5,3.5) {$S_{212}$};
       \draw[densely dashed] (n51621712) -- (n90);
    \node[blue,very thick](n19) at (6.5,2.5) {$S_{312}$};
    \draw[densely dashed](n90)--(n19);
    \node[blue,very thick](n14) at (7.5,1.5) {$S_{412}$};
    \draw[densely dashed](n19)--(n14);
    \node[blue,very thick](n14) at (8,1) {$\ddots$};
    \node[blue,very thick](n18) at (8.5,0.5) {$S_{n12}$};
    \node[blue,very thick](n99) at (9.5,-0.5) {$S_{112}$};
    \node[draw=none] (n151616) at (10.4,-1.5) {$x_{12}$};
       \draw[densely dashed] (n151616) -- (n18);
    \node[draw=none] (n92592962) at (13,-1.5) {$\cdots$};
    \node[red,very thick](n14) at (7,3) {$x_{32}$};
\end{tikzpicture}
\end{center}
\caption{Polygonal Heronian frieze of order $n$}
\label{fig6}
\end{figure}
\begin{example}
In Figure \ref{fig7} we give a fragment of a polygonal Heronian frieze of order 5.
\end{example}
\begin{figure}
\begin{center}
\begin{tikzpicture}[xscale=1,yscale=1.3]
 \node[draw=none](n1) at (0,0) {$x_{12}$};
 \node[draw=none](n2) at (0.5,-0.5) {$S_{122}$};
 \node[draw=none](n24252) at (-0.5,-1.5) {$x_{12}$};   
 \draw[densely dashed] (n2) -- (n24252);
 \node[draw=none](n3) at (1,-1) {$x_{22}$};
 \node[draw=none](n1111) at (1,0) {\textbf{1223}};

 \node[draw=none](n7) at (-1,-1){$x_{11}$};
 \node[draw=none](n8) at (-0.5,-0.5){$S_{112}$};
 \node[draw=none](n14151) at (0.5, -1.5){$x_{12}$};
 \draw[densely dashed] (n14151) -- (n8);
 \draw[-] (n7) to (n8);
  \draw[-] (n8) to (n1);
   \draw[-] (n1) to (n2);
    \draw[-] (n2) to (n3);
 \node[draw=none](n9) at (0.5,0.5) {$S_{123}$};
 \node[draw=none](n10) at (1,1) {$x_{13}$};

 \node[draw=none](n11) at (1.5,0.5) {$S_{123}$};
 \node[draw=none](n12) at (2,0) {$x_{23}$};
 \node[draw=none](n13) at (1.5,-0.5){$S_{223}$};
 \node[draw=none](n415151) at (2.5,-1.5){$x_{23}$};
 \draw[densely dashed] (n13) -- (n415151);
 \node[draw=none](n14) at (1,-1){$x_{22}$};
  \node[draw=none](n565875) at (0,2) {$x_{53}$};
  
 \draw[densely dashed] (n9) -- (n13);

  \node[draw=none](n1908733) at (0.5,1.5) {$S_{513}$};
  \draw[-] (n565875) -- (n1908733);
  \node[draw=none](n67689) at (-1,3) {$x_{43}$};
  \node[draw=none](n6768911) at (0,4) {$x_{44}$};
  \node[draw=none](n1387445) at (10,4) {$x_{44}$};
  
  \node[draw=none](n1387445000) at (9.5,3.5) {$S_{434}$};
  \node[draw=none](n68484123) at (11,3) {$x_{54}$};
  \node[draw=none](n141616171) at (10,3) {\textbf{4534}};
  \node[draw=none](n51616) at (8,1) {\textbf{4512}};
  \node[draw=none](n68484) at (10.5,2.5) {$S_{534}$};
 \node[draw=none] (n15116) at (12,2) {$x_{14}$};
 \node[draw=none] (n19196199) at (12.5,1.5) {$S_{124}$};
 
 \draw[-] (n15116)--(n19196199);
 \node[draw=none] (n2682827) at (12.5,0.5) {$S_{234}$};
 \node[draw=none](n1519691681) at (13,1) {$x_{24}$};
 \draw[-] (n1519691681)--(n19196199);
 \draw[-] (n2682827)--(n1519691681);
 
 \node[draw=none](n251616) at (11,2) {\textbf{5134}};
 \node[draw=none](n5161616) at (11.5,2.5){$S_{514}$};
 \node[draw=none](n52622727) at (11.5,1.5){$S_{134}$};
 \draw[densely dashed] (n52622727)--(n2682827);
 \draw[densely dashed] (n68484)--(n52622727);
 \draw[-] (n68484123)--(n5161616);
 \draw[-](n5161616)--(n15116);
 \draw[-](n15116)--(n52622727);
  \node[draw=none](n6848444) at (10.5,3.5) {$S_{454}$};
  \node[draw=none](n5101961) at (10.5,4.5){$x_{45}$};
  \node[draw=none](n62062602) at (11.5,4.5){$x_{45}$};
  \draw[densely dashed] (n6848444)--(n62062602);
  \draw[-] (n6848444)--(n68484123);
  \draw[-](n1387445)--(n6848444);
   \node[draw=none](n676891123) at (-0.5,3.5) {$S_{434}$}; 
   \draw[-] (n68484123)--(n68484);
   
   \draw[-] (n6848444)--(n68484123);
   \node[draw=none] (n22526272) at (8,3) {\textbf{3423}};
   \node[draw=none] (n581671671) at (7,2) {\textbf{3412}};
   \node[draw=none] (n26829727) at (6,1) {\textbf{3451}};
   \draw[-] (n6768911)--(n676891123);
    \draw[-] (n67689)--(n676891123);
    \node[draw=none](n676891199) at (0.5,3.5) {$S_{454}$};
    
    \node[draw=none](n51519691) at (0.5,4.5){$x_{45}$};
    \node[draw=none] (n156196119) at (1.5,4.5){$x_{45}$};
    \draw[densely dashed] (n676891199) -- (n156196119);
     \draw[-] (n6768911)--(n676891199);
  \node[draw=none](n19087337) at (1,3) {$x_{54}$};
  \draw[-] (n19087337)--(n676891199);
  \node[draw=none](n1908739) at (-0.5,2.5) {$S_{453}$};
  \draw[densely dashed] (n676891199)--(n1908739);

  \draw[-] (n67689)--(n1908739);
  \draw[-] (n1908739)--(n565875);

  \node[draw=none](n190873376) at (0.5,2.5) {$S_{534}$};
  \draw[densely dashed] (n68484)--(n1387445000);
  \draw[-] (n1387445000)--(n1387445);
 
  \draw[densely dashed] (n676891123)--(n190873376);
   \draw[-] (n19087337) to (n190873376);
   \node[draw=none](n1908733996) at (1.5,2.5) {$S_{514}$};
    \draw[-] (n565875) -- (n190873376);
    \draw[densely dashed] (n1908733) -- (n1908733996);
  \draw[-] (n1) to (n9);
   \draw[-] (n9) to (n10);
   \draw[-] (n19087337)--(n1908733996);
   \draw[-] (n1908733)--(n10);
    \draw[-] (n1) to (n9);
    \draw[-] (n10) to (n11);
     \draw[-] (n11) to (n12);
      \draw[-] (n12) to (n13);
       \draw[-] (n13) to (n14);

\draw[densely dashed] (n2) -- (n11);
\draw[densely dashed] (n9) -- (n13);

\node[draw=none](n15) at (1.5,1.5) {$S_{134}$};
 \draw[densely dashed] (n190873376) -- (n15);
 \node[draw=none](n16) at (2,2) {$x_{14}$};
 \draw[-] (n16) to (n1908733996);
 \node[draw=none](n17) at (2.5,1.5) {$S_{124}$};
 \node[draw=none](n18) at (3,1) {$x_{24}$};
 \node[draw=none](n19) at (2.5,0.5){$S_{234}$};
 \node[draw=none](n131) at (2,1) {\textbf{1234}};
  \draw[-] (n10) to (n15);
   \draw[-] (n15) to (n16);
    \draw[-] (n16) to (n17);
     \draw[-] (n17) to (n18);
      \draw[-] (n18) to (n19);
\node[draw=none](n131) at (2,4) {$x_{55}$};    
\node[draw=none](n131890) at (1.5,3.5) {$S_{545}$};
\draw[densely dashed] (n131890) -- (n51519691);
\draw[-] (n131) to (n131890);
\draw[-] (n19087337) to (n131890);
\node[draw=none](n138761) at (2.5,3.5) {$S_{515}$};
\draw[-] (n131)--(n138761);
\draw[densely dashed] (n1908733996)--(n138761);
\node[draw=none](n13876100) at (2,3) {\textbf{5145}};
\node[draw=none](n13876101) at (4,1) {\textbf{2345}};
\node[draw=none](n13876102) at (1,2) {\textbf{5134}};
\node[draw=none](n13876109) at (5,2) {\textbf{2351}};
\draw[-] (n12) to (n19);
      \draw[densely dashed] (n11) -- (n17);
\draw[densely dashed] (n15) -- (n19);
 \node[draw=none](n20) at (2.5,2.5) {$S_{145}$};
 \draw[densely dashed] (n131890)--(n20);
 \node[draw=none](n21) at (3,3) {$x_{15}$};
 \draw[-] (n138761)--(n21);
 \node[draw=none](n22) at (3.5,2.5) {$S_{125}$};
 \node[draw=none](n23) at (4,2) {$x_{25}$};
 \node[draw=none](n24) at (3.5,1.5){$S_{245}$};
 \node[draw=none] (n242) at (3,2){\textbf{1245}};
 \node[draw=none](n138761908) at (0,3) {\textbf{4534}};

 \draw[densely dashed] (n20) -- (n24);

 \draw[densely dashed] (n17) -- (n22);
  \draw[-] (n16) to (n20);
   \draw[-] (n20) to (n21);
    \draw[-] (n21) to (n22);
    \draw[-] (n22) to (n23); 
     \draw[-] (n23) to (n24);
      \draw[-] (n24) to (n18);

 \node[draw=none](n25) at (3.5,3.5) {$S_{151}$};
 \node[draw=none](n26) at (4,4) {$x_{11}$};
 \node[draw=none](n27) at (4.5,3.5) {$S_{121}$};
 \node[draw=none](n1261979290) at (4.5,4.5){$x_{12}$};
 \node[draw=none](n12619792) at (5.5,4.5){$x_{12}$};
 \draw[densely dashed] (n27) -- (n12619792);
 \node[draw=none](n151661) at (3.5,4.5){$x_{51}$};
 \node[draw=none](n15161981) at (2.5,4.5){$x_{51}$};
  \draw[densely dashed] (n25) -- (n15161981);
  \draw[densely dashed] (n151661) -- (n138761);
 \node[draw=none](n28) at (5,3) {$x_{21}$};
 \node[draw=none](n29) at (4.5,2.5){$S_{251}$};
 \node[draw=none] (n141) at (4,3) {\textbf{1251}};
  \draw[-] (n21) to (n25);
   \draw[-] (n25) to (n26);
    \draw[-] (n26) to (n27);
     \draw[-] (n27) to (n28);
      \draw[-] (n28) to (n29);
       \draw[-] (n29) to (n23);

\draw[densely dashed] (n27) -- (n22);
\draw[densely dashed] (n25) -- (n29);

 \node[draw=none](n30) at (3.5,0.5) {$S_{234}$};
 \node[draw=none](n31) at (4,0) {$x_{34}$};
 \node[draw=none](n32) at (3.5,-0.5) {$S_{334}$};
 \node[draw=none](n411616) at (4.5,-1.5){$x_{34}$};
 \draw[densely dashed] (n32) -- (n411616);
\node[draw=none](n33) at (3,-1) {$x_{33}$};

  \node[draw=none](n34) at (2.5,-0.5) {$S_{233}$};
  \node[draw=none] (n61611) at (1.5,-1.5){$x_{23}$};
   \draw[densely dashed] (n34) -- (n61611);
  \node[draw=none] (n242) at (3,0) {\textbf{2334}};
  \draw[-] (n12) to (n34);
    \draw[-] (n34) to (n33);
     \draw[-] (n18) to (n19);
      \draw[-] (n18) to (n30);
       \draw[-] (n30) to (n31);
        \draw[-] (n31) to (n32);
         \draw[-] (n32) to (n33);
         \draw[densely dashed] (n34) -- (n30);
\draw[densely dashed] (n19) -- (n32);

 \node[draw=none](n37) at (4.5,1.5) {$S_{235}$};
 \draw[densely dashed] (n34) -- (n37);

 \node[draw=none](n38) at (5.0,1) {$x_{35}$};
 \node[draw=none](n39) at (4.5,0.5) {$S_{345}$};
  \draw[-] (n23) to (n37);
   \draw[-] (n37) to (n38);
    \draw[-] (n38) to (n39);
   \draw[-] (n31) to (n39); 
\draw[densely dashed] (n24) -- (n39);
  \node[draw=none](n40) at (4.5,-0.5) {$S_{344}$};
  \node[draw=none](n56106191) at (3.5,-1.5){$x_{34}$};
  \draw[densely dashed] (n40) -- (n56106191);
  \node[draw=none](n41) at (5,-1) {$x_{44}$};
  \node[draw=none] (n14141) at (5,0) {\textbf{3445}};
 \draw[-] (n41) to (n40);

  \node[draw=none](n44) at (5.5,2.5) {$S_{231}$};
  \node[draw=none](n45) at (6,2) {$x_{31}$};
 \node[draw=none](n46) at (5.5,1.5) {$S_{351}$};
  \draw[-] (n28) to (n44);
   \draw[-] (n44) to (n45);
    \draw[-] (n45) to (n46);
     \draw[-] (n46) to (n38);
\draw[densely dashed] (n44) -- (n37);
\draw[densely dashed] (n29) -- (n46);

\node[draw=none](n47) at (5.5,0.5) {$S_{345}$};
\node[draw=none](n48) at (6,0) {$x_{45}$};
\node[draw=none](n49) at (5.5,-0.5) {$S_{445}$};
\node[draw=none](n4101951) at (6.5,-1.5){$x_{45}$};
\draw[densely dashed] (n49) -- (n4101951);
\node[draw=none](n05191061) at (5.5,-1.5){$x_{45}$};
 \draw[-] (n38) to (n47);
 \draw[-] (n47) to (n48);
  \draw[-] (n48) to (n49);
   \draw[-] (n49) to (n41);
   \draw[-] (n31) to (n40);
\draw[densely dashed] (n40) -- (n47);
\draw[densely dashed] (n39) -- (n49);

\node[draw=none](n50) at (5.5,3.5) {$S_{212}$};
\draw[densely dashed] (n50) -- (n1261979290);
\node[draw=none](n51) at (6,4) {$x_{22}$};
\node[draw=none](n52) at (6.5,3.5) {$S_{232}$};
\node[draw=none](n51616109) at (6.5,4.5){$x_{23}$};
\node[draw=none](n0596020620) at (7.5,4.5){$x_{23}$};
\node[draw=none](n552929062) at (8.5,4.5){$x_{34}$};
\draw[densely dashed] (n1387445000)--(n552929062);
\node[draw=none](n529296206) at (9.5,4.5){$x_{34}$};
\draw[densely dashed] (n52)-- (n0596020620);
\node[draw=none](n53) at (7,3) {$x_{32}$};
\node[draw=none](n100) at (8,4) {$x_{33}$};
\node[draw=none](n101) at (7.5,3.5) {$S_{323}$};
\draw[densely dashed] (n101) -- (n51616109);
\node[draw=none](n141) at (6,3) {\textbf{2312}};
\draw[-] (n53) to (n101);
\draw[-] (n100) to (n101);
\draw[densely dashed] (n44) -- (n52);

\node[draw=none](n102) at (8.5,3.5) {$S_{343}$};
\draw[densely dashed] (n102)--(n529296206);
\draw[-] (n100) to (n102);
\node[draw=none](n103) at (9,3) {$x_{43}$};
\draw[-] (n102) to (n103);

\node[draw=none](n104) at (8.5,2.5) {$S_{423}$};
\draw[-] (n103) to (n104);
\node[draw=none](n105) at (9.5,2.5) {$S_{453}$};
\node[draw=none](n106) at (10,2) {$x_{53}$};
\node[draw=none](n106) at (10,2) {$x_{53}$};
\node[draw=none](n108) at (10.5,1.5) {$S_{513}$};

\node[draw=none](n109) at (11,1) {$x_{13}$};
\draw[-](n52622727)--(n109);
\node[draw=none](n110) at (10.5,0.5) {$S_{123}$};
\node[draw=none](n14151) at (9,2) {\textbf{4523}};
\node[draw=none](n111) at (10,0) {$x_{12}$};
\node[draw=none](n112) at (9.5,0.5) {$S_{512}$};
\draw[densely dashed] (n101) -- (n104);
\draw[densely dashed] (n105)--(n6848444);
\node[draw=none](n107) at (9.5,1.5) {$S_{523}$};
\draw[-] (n105) to (n103);
\draw[-] (n105) to (n106);
\draw[-] (n106) to (n107);
\node[draw=none](n54) at (6.5,2.5) {$S_{312}$};
\draw[densely dashed] (n104) -- (n107);
 \draw[densely dashed] (n50) -- (n54);
\draw[-] (n106) to (n108);
\draw[densely dashed] (n107) -- (n110);
\draw[-] (n108) to (n109);
\draw[-] (n110) to (n109);
\draw[-] (n110) to (n111);
\draw[-] (n111) to (n112);

 \node[draw=none](n55) at (6.5,1.5) {$S_{341}$};
 \draw[densely dashed] (n47) -- (n55);
 \node[draw=none](n56) at (7,1) {$x_{41}$};
  \node[draw=none](n57) at (6.5,0.5) {$S_{451}$};
  \draw[-] (n28) to (n50);
  \draw[-] (n50) to (n51);
  \draw[-] (n51) to (n52);
  \draw[-] (n52) to (n53);
  \draw[-] (n53) to (n54);
  \draw[-] (n54) to (n45);
  \draw[-] (n45) to (n55);
  \draw[-] (n55) to (n56);
   \draw[-] (n56) to (n57);
 \draw[densely dashed] (n46) -- (n57);  

\node[draw=none](n58) at (7.5,2.5) {$S_{342}$};
 \node[draw=none](n59) at (8,2) {$x_{42}$};
 \draw[-] (n59) to (n104);
  \node[draw=none](n60) at (7.5,1.5) {$S_{412}$};
  \draw[densely dashed] (n54) -- (n60);
   
\draw[densely dashed] (n58) -- (n55);
\draw[densely dashed] (n58) -- (n102);
  
\node[draw=none](n61) at (7.5,0.5) {$S_{451}$};
 \node[draw=none](n62) at (8,0) {$x_{51}$};
  \node[draw=none](n63) at (7.5,-0.5) {$S_{551}$};
  \node[draw=none](n50626921) at (8.5,-1.5){$x_{51}$};
  \node[draw=none](n5619196591) at (7.5,-1.5){$x_{51}$};
  \draw[densely dashed] (n63) -- (n50626921);
\node[draw=none](n64) at (7,-1) {$x_{55}$};
\node[draw=none](n141) at (7,0) {\textbf{4551}};
 \node[draw=none](n65) at (6.5,-0.5) {$S_{455}$};
 \draw[densely dashed] (n65) -- (n05191061);
  \draw[-] (n62) to (n61);
   \draw[-] (n56) to (n61);
   \draw[-] (n48) to (n65);
   \draw[-] (n64) to (n65);
  \draw[-] (n48) to (n57); 
  \draw[-] (n63) to (n64);
  \draw[-] (n62) to (n63);
\draw[densely dashed] (n61) -- (n65);
\draw[densely dashed] (n57) -- (n63);

 \draw[-] (n53) to (n58);
 \draw[-] (n58) to (n59);
 \draw[-] (n59) to (n60);
 \draw[-] (n60) to (n56);

\node[draw=none](n69) at (8.5,-0.5) {$S_{511}$};
\draw[densely dashed] (n5619196591)--(n69);
\node[draw=none](n70) at (9,-1) {$x_{11}$};
\node[draw=none](n141) at (9,0) {\textbf{5112}};

\node[draw=none](n73) at (8.5,1.5) {$S_{452}$};
\node[draw=none](n74) at (9,1) {$x_{52}$};
\node[draw=none](n14178) at (10,1) {\textbf{5123}};
\node[draw=none](n75) at (8.5,0.5) {$S_{512}$};
\draw[densely dashed] (n61) -- (n73);
\draw[densely dashed] (n73) -- (n105);
 \draw[-] (n59) to (n73);
  \draw[-] (n73) to (n74);
   \draw[-] (n74) to (n112);
    \draw[-] (n75) to (n62);
    \draw[-] (n107) to (n74);
\draw[-] (n74) to (n75);
\draw[densely dashed] (n60) -- (n75);

\node[draw=none](n200) at (9.5,-0.5) {$S_{112}$};
\node[draw=none](n69196919619) at (10.5,-1.5){$x_{12}$};
\draw[densely dashed] (n200)--(n69196919619);
\node[draw=none](n1595159961) at (9.5,-1.5){$x_{12}$};
\node[draw=none](n202) at (12,0) {$x_{23}$};
\node[draw=none](n201) at (11.5,0.5) {$S_{123}$};
\node[draw=none](n203) at (11.5,-0.5) {$S_{223}$};
\node[draw=none](n9519519) at (12.5,-1.5){$x_{23}$};
\draw[densely dashed] (n203)--(n9519519);
\node[draw=none](n519515195119) at (11.5,-1.5){$x_{23}$};
\node[draw=none](n204) at (11,-1) {$x_{22}$};
\node[draw=none](n205) at (10.5,-0.5) {$S_{122}$};
\draw[densely dashed] (n1595159961)--(n205);
\node[draw=none](n210) at (13,-1) {$x_{33}$};
\node[draw=none](n211) at (12.5,-0.5) {$S_{233}$};
\draw[densely dashed] (n211)--(n519515195119);
\draw[densely dashed] (n75) -- (n200);
\draw[-] (n202) to (n211);
\draw[-] (n2682827)--(n202);
\draw[densely dashed] (n203) -- (n110);
\draw[-] (n111) to (n200);
\draw[-] (n200) to (n70);
\draw[-] (n69) to (n70);
\draw[-] (n62) to (n69);
\draw[-] (n109) to (n201);
\draw[-] (n201) to (n202);
\draw[-] (n202) to (n203);
\draw[-] (n203) to (n204);
\draw[-] (n204) to (n205);
\draw[-] (n205) to (n111);
\draw[-] (n210) to (n211);
 \draw[-] (n106)--(n68484);
\draw[densely dashed] (n69) -- (n112);
\draw[densely dashed] (n108) -- (n112);
\draw[densely dashed] (n201) -- (n205);
 \draw[-] (n1387445000)--(n103);
 \draw[densely dashed] (n19196199)--(n201);
 \node[draw=none](n25161616) at (11,0) {\textbf{1223}};
\node[draw=none](n25161616) at (12,1) {\textbf{1234}};
\end{tikzpicture}
\end{center}
\caption{Fragment of a polygonal Heronian frieze of order 5}
\label{fig7}
\end{figure}
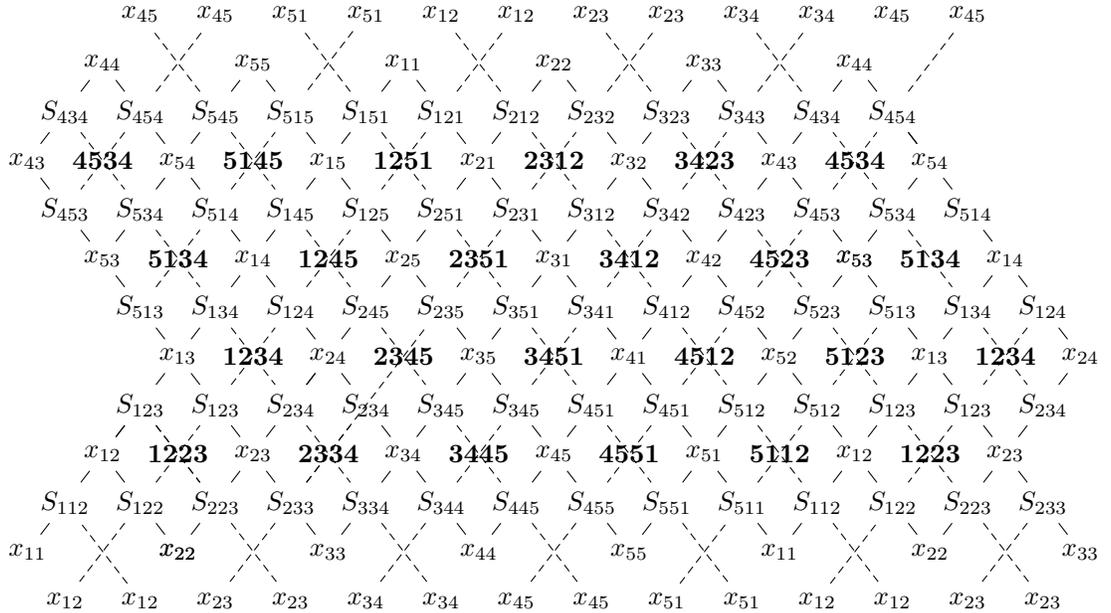
\begin{remark}
From Figure \ref{fig5}, it follows that all the diamonds of the polygonal Heronian frieze of order $n$ correspond to the quadruples of vertices of the form $(A_a,A_{a+1}, A_{b}, A_{b+1})$, as presented in Figure \ref{fig8},  where $a, b$ are distinct elements of $\{1,2...,n\}$, and addition is modulo $n$. 
\\ 
Note as well that the diamonds in the top and the bottom row of the polygonal frieze of order $n$ correspond to the quadruples $(A_a,A_{a+1},A_b,A_{b+1})$, where $b = a-1$ and $b = a+1$, respectively, and subtraction and addition are modulo $n$. In other words, the diamonds in the top and the bottom row of the polygonal Heronian frieze of order $n$ correspond to the quadruples of vertices of the form $(A_a,A_{a+1},A_{a-1},A_{a})$ and $(A_{a},A_{a+1},A_{a+1},A_{a+2})$, respectively, where $a \in \{1,2,...,n\}$.
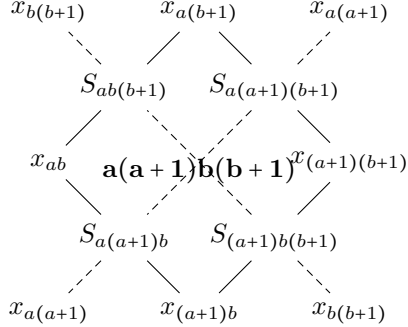
\begin{figure}
\begin{center}
\begin{tikzpicture}
 \node[draw=none](n1) at (0,0) {$x_{a(b+1)}$};
 \node[draw=none](n2) at (1,-1) {$S_{a(a+1)(b+1)}$};
 \node[draw=none](n3) at (2,-2) {$x_{(a+1)(b+1)}$};
 \node[draw=none](n4) at (1,-3) {$S_{(a+1)b(b+1)}$};
 \node[draw=none](n5) at (0,-4) {$x_{(a+1)b}$};
 \coordinate[label=${\mathbf{a(a+1)b(b+1)}}$] (c1) at (0,-2.4);
 \node[draw=none](n6) at (-1,-3){$S_{a(a+1)b}$};
 \node[draw=none](n7) at (-2,-2){$x_{ab}$};
 \node[draw=none](n8) at (-1,-1){$S_{ab(b+1)}$};
 \node[draw=none](n9) at (-2,-4){$x_{a(a+1)}$};
 \node[draw=none](n10) at (2,-4){$x_{b(b+1)}$};
 \node[draw=none](n11) at (-2,0){$x_{b(b+1)}$};
 \node[draw=none](n12) at (2,0){$x_{a(a+1)}$};
 
 \draw[-] (n1) to (n2);
 \draw[-] (n2) to (n3);
 \draw[-] (n3) to (n4);
 \draw[-] (n4) to (n5);
 \draw[-] (n5) to (n6);
 \draw[-] (n6) to (n7);
 \draw[-] (n7) to (n8);
 \draw[-] (n8) to (n1);
 \draw [densely dashed ] (n2) -- (n6);
 \draw[densely dashed] (n4) -- (n8);
 \draw[densely dashed] (n2) -- (n12);
 \draw[densely dashed] (n6) -- (n9);
 \draw[densely dashed] (n8) -- (n11);
 \draw[densely dashed] (n4) -- (n10);
\end{tikzpicture}
\end{center} 
\caption{Heronian diamond for a quadruple of vertices $A_a, A_{a+1}, A_b, A_{b+1}$}
\label{fig8}
\end{figure}
\label{rem110}
\end{remark}
\newpage
\noindent Let us now introduce the concept of a plane polygonal Heronian frieze, that will be needed for stating some results in Section 2.
\\ \\
 In order to define a plane polygonal Heronian frieze of order $n$, we are using a new type of diamonds for the purpose of  "gluing" together the copies of the original frieze. Namely, those are the diamonds corresponding to the quadruples of vertices of the form $(A_a,A_{a+1},A_a,A_{a+1})$, where $a \in \{1,2,...,n\}$, and addition is modulo $n$, i.e. the diamonds as in Figure \ref{fig8}, where $a = b$. It is easy to check that these are Heronian diaomonds as well. Hence the following definition.
\begin{definition}
 A \textit{plane polygonal Heronian frieze of order $n$} is a collection of numbers arranged as in Figure \ref{fig9}, arising from a polygon $(A_1,A_2,...,A_n)$ in the complex plane with $n \geq 3$, and satisfying a condition that all the diamonds constituting the frieze are Heronian. \\ 
The diamonds used for gluing the copies of the original frieze, as explained above, are drawn in red in Figure \ref{fig9}, and we will call them the \textit{gluing diamonds}.
\begin{remark}
The frieze from Definition \ref{def111} covers the whole plane, hence the name.    
\end{remark}
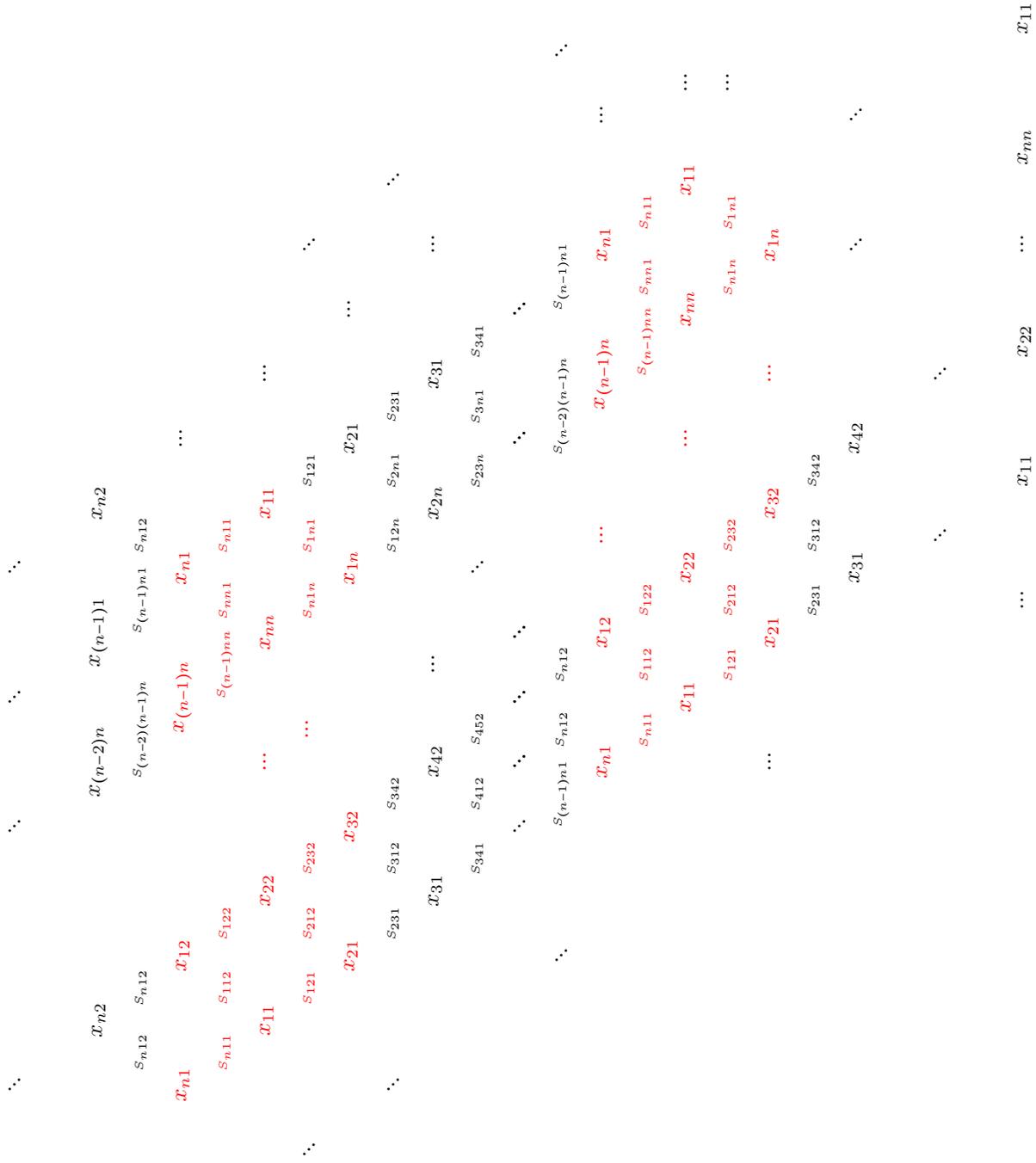
\begin{sidewaysfigure}
\begin{center}
\begin{tikzpicture}[xscale=1,yscale=1.3]
  \node[red](n1) at (4,4) {$x_{11}$};
   \node[red](n627728) at (4.5,4.5) {\tiny $S_{112}$};
   \node[red](n2) at (4.5,3.5) {\tiny $S_{121}$};
   \node[draw=none] (n123098786787) at (2,3.5) {$\ddots$};
   
    \node[red](n3) at (5,3) {$x_{21}$};
    \node[very thick](n5) at (5.5,2.5) {\tiny $S_{231}$};
    \node[draw=none] (n123787665) at (3,2.5) {$\ddots$};
    \node[draw=none](n4) at (6,2) {$x_{31}$};
    \node[very thick](n5) at (6.5,1.5) {\tiny $S_{341}$};
    \node[red](n9) at (8,0) {$x_{n1}$};
    \node[draw=none](n12) at (7,1) {$\ddots$};
    \node[draw=none](n314151) at (11,1.5) {$\ddots$};
    \node[very thick](n25151) at (9.5,2){$\cdots$};
    \node[red](n25151) at (11.5,0){$\cdots$};
    \node[very thick](n10) at (7.5,0.5) { \tiny $S_{(n-1)n1}$};
    \node[draw=none] (n1230987) at (5,0.5) {$\ddots$};
    \node[red](n7) at (8.5,-0.5) {\tiny $S_{n11}$};
    
    \node[draw=none](n494098) at (12.5,-5) {$x_{11}$};
    \node[red](n8) at (9,-1) {$x_{11}$};
    \node[red](n8) at (9.5,-1.5) {\tiny $S_{121}$};
 \node[red](n89879) at (10,-2) {$x_{21}$};
  \node[very thick](n62772218) at (10.5,-2.5) {\tiny $S_{231}$};
 \node[red](n89976) at (10.5,-1.5) {\tiny $S_{212}$};
     \node[very thick](n900997654) at (11,-3) {$x_{31}$};
      \node[very thick](n62772887) at (11.5,-2.5) {\tiny $S_{312}$};
    \node[very thick](n8978666868) at (11.5,-4) {$\ddots$};
     
      \node[very thick](n1897647687) at (10.5,-5) {$\cdots$};
      \node[very thick](n231567547) at (14.5,-5) {$x_{22}$};
       \node[very thick](n1897647687989) at (16,-5) {$\cdots$};
       \node[very thick](n1897647687) at (17.5,-5) {$x_{nn}$};
       \node[very thick](n1897647687) at (19.5,-5) {$x_{11}$}; 
   \node[very thick] (n1591691) at (8, -2) {$\cdots$};
     \node[red](n137866) at (5,5) {$x_{12}$};
      \node[red](n627728) at (5.5,4.5) {\tiny $S_{122}$};
      \node[very thick](n136373) at (4,6) {$x_{n2}$};
      \node[red](n41991500) at (3,5){$x_{n1}$};
      \node[red](n5951519) at (3.5,4.5){\tiny $S_{n11}$};
      \node[draw=none](n5195101) at (3.5,5.5){\tiny $S_{n12}$};
       \node[very thick](n6277997528) at (4.5,5.5) {\tiny $S_{n12}$};
       \node[very thick](n1685393) at (3,7) {$\ddots$};

    \node[very thick](n17) at (6.5,3.5) {$ $};

    \node[very thick](n14) at (10,1) {$\ddots$};

    \node[very thick](n87) at (7.5,2.5) {$ $};

     \node[red](n628687728) at (10.5,4.5) {\tiny $S_{nn1}$};
    
     \node[red](n892478868) at (9,5) {$x_{(n-1)n}$};
      
      \node[red](n652527728) at (9.5,4.5) {\tiny $S_{(n-1)nn}$};
      \node[very thick](n8978123868) at (8,6) {$x_{(n-2)n}$};
       \node[very thick](n627989728) at (8.5,5.5) {\tiny $S_{(n-2)(n-1)n}$};
       \node[very thick](n8978868) at (7,7) {$\ddots$};
    \node[red](n900987) at (11,5) {$x_{n1}$};
     \node[red](n627687728) at (11.5,4.5) {\tiny $S_{n11}$};
     \node[very thick](n627767728) at (11.5,5.5) {\tiny $S_{n12}$};
     \node[very thick](n189764877687) at (13,5) {$\cdots$};
     \node[very thick](n8970898868) at (10,6) {$x_{(n-1)1}$};
      \node[very thick](n62427728) at (10.5,5.5) {\tiny $S_{(n-1)n1}$};
      \node[very thick](n897880065568) at (9,7) {$\ddots$};
    \node[very thick](n98900660) at (12,6) {$x_{n2}$};
     \node[very thick](n89778658868) at (11,7) {$\ddots$};
   
     \node[very thick](n1897678567) at (14,4) {$\cdots$};
   \node[very thick](n2) at (12.5,3.5) {$ $};
     \node[very thick](n1897654655) at (15,3) {$\cdots$};
    \node[very thick](n5) at (13.5,2.5) {$ $};
  
     \node[very thick](n189989075) at (16,2) {$\cdots$};
    \node[very thick](n5) at (14.5,1.5) {$ $}; 
    \node[very thick](n8049) at (10.5,3.5) {$ $};
    \node[very thick] (n1230987) at (11.5,3.5) {$ $};
    \node[very thick] (n123098877) at (12.5,2.5) {$ $};
    \node[very thick] (n1230976487) at (13.5,1.5) {$ $};
    \node[draw=none] (n123098721) at (15,1) {$\ddots$};
   
    \node[very thick](n80129) at (11.5,2.5) {$ $};
  
    \node[very thick](n555) at (12.5,1.5) {$ $};
    \node[draw=none](n4532) at (13,1) {$\ddots$};
    \node[very thick](n809) at (13.5,0.5) {$ $};
    \node[very thick] (n1230987) at (15.5,0.5) {$ $};
     \node[very thick](n189765478) at (18,0) {$\cdots$};
    \node[very thick] (n1230987) at (16.5,-0.5) {$ $};
    \node[very thick](n80769) at (14.5,-0.5) {$ $};
     \node[very thick] (n1230976287) at (15.5,-0.5) {$ $};
    \node[draw=none] (n15627171) at (18.5,-1) {$\cdots$};
     \node[red](n65427728) at (15.5,-1.5) {\tiny $S_{n1n}$};
   
    \node[very thick](n14) at (8.5,1.5) {$ $};
    \node[draw=none](n22) at (9,1) {$\ddots$};
    \node[very thick](n24) at (9.5,0.5) {$ $};
   
    \node[very thick](n32) at (10.5,-0.5) {$ $};
   \node[red](n627728) at (11.5,-1.5) {\tiny $S_{232}$};
    \node[red](n189745211) at (13,-1) {$\cdots$};
    \node[red](n9789670) at (12,-2) {$x_{32}$};
     \node[very thick](n627878728) at (12.5,-2.5) {\tiny $S_{342}$};
     \node[red](n18947383) at (14,-2) {$\cdots$};
    \node[very thick](n9090776) at (13,-3) {$x_{42}$};
     \node[very thick](n1897647687) at (16,-3) {$\ddots$};
      \node[very thick](n1897647687) at (18,-3) {$\ddots$};
    \node[very thick](n99908770) at (14,-4) {$\ddots$};
    \node[very thick](n90) at (5.5,3.5) {$ $};
    \node[very thick](n19) at (6.5,2.5) {$ $};
    \node[very thick](n14) at (7.5,1.5) {$ $};
    \node[very thick](n14) at (8,1) {$\ddots$};
    \node[red](n18) at (8.5,0.5) {$ $};
    \node[very thick](n99) at (9.5,-0.5) {$ $};
      
    \node[red](n13) at (6,4) {$x_{22}$};
    \node[red](n805439) at (8,4) {$\cdots$};
    \node[red](n8043879) at (8.5,3.5) {$\cdots$};
    \node[red](n17) at (6.5,3.5) {\tiny $S_{232}$};
    \node[very thick](n14) at (10,1) {$\ddots$};
    
    \node[very thick](n87) at (7.5,2.5) {\tiny $S_{342}$};
  
    \node[red](n8059) at (10,4) {$x_{nn}$};
    \node[red](n1) at (12,4) {$x_{11}$};
   \node[very thick](n2) at (12.5,3.5) {\tiny $S_{121}$};
    \node[draw=none](n3) at (13,3) {$x_{21}$};
    \node[very thick](n5) at (13.5,2.5) {\tiny $S_{231}$};
    \node[very thick](n4) at (14,2) {$x_{31}$};
    \node[very thick](n5) at (14.5,1.5) {\tiny $S_{341}$}; 
    \node[draw=none] (n1230987) at (16,3.5) {$\ddots$};
    \node[draw=none] (n1230988767) at (17,2.5) {$\ddots$};
    \node[draw=none] (n1230987) at (19,0.5) {$\ddots$};
    \node[red](n8049) at (10.5,3.5) {\tiny $  S_{ n1n}$};
    
    \node[red] (n1230987) at (11.5,3.5) {\tiny $S_{1n1}$};
    
    \node[very thick] (n123098877) at (12.5,2.5) {\tiny $S_{2n1}$};

    \node[very thick] (n1230976487) at (13.5,1.5) {\tiny $S_{3n1}$};
       
    \node[draw=none] (n123098721) at (15,1) {$\ddots$};
    
    \node[red](n809) at (11,3) {$x_{1n}$};
    \node[very thick](n80129) at (11.5,2.5) {\tiny $S_{12n}$};
     
    \node[draw=none](n805649) at (12,2) {$x_{2n}$};
    \node[very thick](n555) at (12.5,1.5) {\tiny $S_{23n}$};
    \node[draw=none](n4532) at (13,1) {$\ddots$};
    \node[very thick](n809) at (13.5,0.5) {\tiny $S_{(n-2)(n-1)n}$};
    \node[draw=none] (n1230987) at (15.5,0.5) {\tiny $S_{(n-1)n1}$};
    \node[red] (n1230987) at (16,0) {$x_{n1}$};
    \node[red] (n1230987) at (16.5,-0.5) {\tiny $S_{n11}$};
    \node[red] (n1230987647) at (17,-1) {$x_{11}$};
    
    \node[red](n8077) at (14,0) {$x_{(n-1)n}$};
    \node[red](n80769) at (14.5,-0.5) {\tiny $S_{(n-1)nn}$};
     \node[red] (n1230976287) at (15.5,-0.5) {\tiny $S_{nn1}$};
    
    \node[draw=none] (n15627171) at (18.5,-1.5) {$\cdots$};
\node[red] (n51516543) at (16,-2)  {$x_{1n}$};
  \node[red](n629897728) at (16.5,-1.5) {\tiny $S_{1n1}$};
    \node[red](n809) at (15,-1) {$x_{nn}$};
    
    \node[draw=none](n14) at (8,2) {$x_{42}$};
    \node[very thick](n14) at (8.5,1.5) {\tiny $S_{452}$};
    \node[draw=none](n22) at (9,1) {$\ddots$};
    \node[very thick](n24) at (9.5,0.5) {\tiny $S_{n12}$};

    \node[red](n65) at (10,0) {$x_{12}$};
    \node[red](n32) at (10.5,-0.5) {\tiny $S_{122}$};
    
    \node[red](n90) at (11,-1) {$x_{22}$};
    \node[red](n90) at (5.5,3.5) {\tiny $S_{212}$};
       
    \node[very thick](n19) at (6.5,2.5) {\tiny $S_{312}$};
    
    \node[very thick](n14) at (7.5,1.5) {\tiny $S_{412}$};
    
    \node[very thick](n14) at (8,1) {$\ddots$};
    \node[very thick](n18) at (8.5,0.5) {\tiny $S_{n12}$};
    \node[red](n99) at (9.5,-0.5) {\tiny $S_{112}$};

    \node[red](n14) at (7,3) {$x_{32}$};
\end{tikzpicture}
\end{center}
\caption{Plane polygonal Heronian frieze of order $n$ (the dashed lines are extended)}
\label{fig9}
\end{sidewaysfigure}
\label{def111}
\end{definition}
\section{On Determinants in Heronian friezes}
Before stating the results, we shall first introduce some geometrical prerequisites.
\begin{definition}
    A polygon $P = (A_1, A_2, ..., A_n)$ is cyclic if all of its vertices are distinct and it can be inscribed in a circle.
\end{definition}
\begin{definition}\cite{key2}
Let $P = (A_1, A_2, ..., A_n)$ be a cyclic polygon, and $|A_iA_j|$, where $i, j \in \{1, 2, ..., n\}$, the length of the chord $A_iA_j$. Then we define $c_r$ to be the reciprocal of the product of the chord lengths that start at $A_r$, for $r \in \{1, 2, ..., n\}$, i.e. \begin{equation}
 c_{r} := \frac{1}{\prod_{s\neq r} |A_rA_s|}.
\end{equation}
\end{definition}
\begin{theorem} \cite[Section 1]{key2}
 Let $P = (A_1, A_2, ..., A_n)$ be a cyclic polygon and $n$ an even integer. Then $c_1 - c_2 + c_3 - ... - c_n = 0$.
 \label{thm009}
\end{theorem}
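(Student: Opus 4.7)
The proof strategy I would follow is classical: encode the polygon as the roots of a polynomial on the unit circle and reduce the Smith identity to a residue-at-infinity computation, modulated by an explicit phase that the parity hypothesis will eventually kill.

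First, after scaling by $1/R$, I would assume the polygon is inscribed in the unit circle, so that $A_r$ corresponds to $z_r = e^{i\theta_r}$ with $0 \leq \theta_1 < \theta_2 < \cdots < \theta_n < 2\pi$. Set $g(z) = \prod_{k=1}^{n} (z - z_k)$. Using the identity $z_r - z_s = 2i\, e^{i(\theta_r+\theta_s)/2}\sin((\theta_r-\theta_s)/2)$, I would compute $g'(z_r) = \prod_{s\neq r}(z_r - z_s)$ by separating modulus and phase: with the vertices in cyclic order, $\sin((\theta_r-\theta_s)/2)$ is negative precisely when $s > r$, contributing a sign $(-1)^{n-r}$; the magnitudes multiply to $2^{n-1}/c_r$ since $|z_r - z_s| = |A_rA_s|$; and the phase factors combine to $e^{i[(n-2)\theta_r/2 + \Sigma/2]}$, where $\Sigma = \sum_k \theta_k$. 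Putting these pieces together gives
\[
\frac{1}{g'(z_r)} = (-i)^{n-1}(-1)^{n-r}\, c_r\, e^{-i[(n-2)\theta_r/2 + \Sigma/2]}.
\]

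Next, I would invoke the standard partial-fractions / residue-at-infinity identity: for the distinct roots $z_1, \dots, z_n$ of $g$ and any integer $k$ with $0 \leq k \leq n-2$,
\[
\sum_{r=1}^{n} \frac{z_r^{\,k}}{g'(z_r)} = 0,
\]
since $z^{k}/g(z) = O(1/z^{2})$ at infinity. Substituting the explicit form above and cancelling the common constant $(-i)^{n-1}e^{-i\Sigma/2}$ yields the family of identities
\[
\sum_{r=1}^{n} (-1)^{n-r}\, c_r\, e^{i(k - (n-2)/2)\theta_r} = 0, \qquad 0 \leq k \leq n-2.
\]

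Finally, the parity assumption enters decisively: for $n$ even, $(n-2)/2$ is an integer lying in the admissible range $[0,n-2]$, so taking $k = (n-2)/2$ collapses all the exponentials to $1$. The identity then reduces to $\sum_{r} (-1)^{n-r} c_r = 0$, which for $n$ even reads $-c_1 + c_2 - c_3 + \cdots + c_n = 0$, equivalent to $c_1 - c_2 + c_3 - \cdots - c_n = 0$. The most delicate step is the bookkeeping of signs in $\prod_{s\neq r}\sin((\theta_r-\theta_s)/2)$ and the careful extraction of the phase $(n-2)\theta_r/2 + \Sigma/2$ from the product; everything else is routine, and it is precisely the integrality of $(n-2)/2$ that makes the parity hypothesis indispensable.
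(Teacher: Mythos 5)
The paper does not actually prove this statement; it is imported verbatim from Smith \cite[Section 1]{key2}, so there is no internal proof to compare against. Judged on its own terms, your argument is complete and correct. The factorization $z_r-z_s=2i\,e^{i(\theta_r+\theta_s)/2}\sin((\theta_r-\theta_s)/2)$ is right, the sign count $(-1)^{n-r}$ is right because each half-difference lies in $(-\pi,\pi)$ where $\sin$ has the sign of its argument, the phase $e^{i[(n-2)\theta_r/2+\Sigma/2]}$ is the correct total, and the vanishing of $\sum_r z_r^k/g'(z_r)$ for $0\le k\le n-2$ is the standard partial-fractions/residue-at-infinity fact for a monic polynomial with simple roots. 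Choosing $k=(n-2)/2$, which is an admissible integer exactly when $n$ is even, kills the phases and gives $\sum_r(-1)^{n-r}c_r=0$, i.e.\ the alternating sum. Two small remarks. First, your argument silently uses that the labels $A_1,\dots,A_n$ follow the cyclic order on the circle (your normalization $\theta_1<\cdots<\theta_n$); the identity is false for an arbitrary labelling of concyclic points, so this hypothesis should be made explicit — it is consistent with how the paper applies the theorem (vertices ordered anticlockwise). Second, your method actually proves the stronger one-parameter family $\sum_r(-1)^{n-r}c_r e^{i(k-(n-2)/2)\theta_r}=0$ for all $0\le k\le n-2$, of which Smith's identity is the single real member; that is a genuine bonus over merely citing the result.
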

\begin{theorem} \cite[Theorem 10]{key3}
Let $P = (A_0, A_1, ..., A_n)$ be a cyclic  $(n+1)$-gon, where $n \geq 2$. If $\delta_{ij}$ is the length of the segment $A_iA_j$, then
\begin{equation}
    \frac{\delta_{0(n-1)}}{\delta_{0n}\delta_{n(n-1)} }=  \sum_{i=1}^{n - 1} \frac{\delta_{(i-1)i}}{\delta_{(i-1)n}\delta_{ni}}.
\end{equation}
\label{thm24}
\end{theorem}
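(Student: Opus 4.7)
The plan is to reduce the identity to a telescoping cotangent sum after parametrising the vertices of the polygon by angles on the circumscribed circle.

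First I would fix the circle of radius $R$ on which the polygon is inscribed and write each vertex as $A_k = (R\cos 2\theta_k, R\sin 2\theta_k)$, choosing the $\theta_k$ inside a common interval of length less than $\pi$ consistent with the cyclic ordering of the vertices. The chord-length formula then gives $\delta_{ij} = 2R|\sin(\theta_i - \theta_j)|$. With this ordering, every $\sin(\theta_i - \theta_j)$ appearing below has a predictable sign, and a direct check shows that these signs cancel pairwise on both sides of the claimed equality, so it suffices to prove the signed identity
\begin{equation*}
\frac{\sin(\theta_0 - \theta_{n-1})}{\sin(\theta_0 - \theta_n)\sin(\theta_n - \theta_{n-1})} = \sum_{i=1}^{n-1} \frac{\sin(\theta_{i-1} - \theta_i)}{\sin(\theta_{i-1} - \theta_n)\sin(\theta_n - \theta_i)}.
\end{equation*}

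Second, for each $i$ I would write $\sin(\theta_{i-1} - \theta_i) = \sin\bigl((\theta_{i-1} - \theta_n) - (\theta_i - \theta_n)\bigr)$ and expand with the sine subtraction formula; dividing by $\sin(\theta_{i-1} - \theta_n)\sin(\theta_n - \theta_i)$ collapses the $i$-th summand on the right to $\cot(\theta_{i-1} - \theta_n) - \cot(\theta_i - \theta_n)$. The sum then telescopes to $\cot(\theta_0 - \theta_n) - \cot(\theta_{n-1} - \theta_n)$. Applying the same manipulation to the left-hand side, writing $\theta_0 - \theta_{n-1} = (\theta_0 - \theta_n) - (\theta_{n-1} - \theta_n)$, produces the same expression, finishing the proof.

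The main obstacle will be the sign bookkeeping when passing from $|\sin|$ to $\sin$: one has to verify that every numerator-denominator pair carries matching signs under the chosen angular ordering, so that the absolute-value identity is really equivalent to the signed one. A purely geometric alternative would be induction on $n$: the base case $n=3$ is precisely Ptolemy's theorem for the cyclic quadrilateral $A_0A_1A_2A_3$, and in the inductive step Ptolemy applied to the cyclic quadrilateral $A_0A_{n-2}A_{n-1}A_n$ rewrites $\frac{\delta_{0(n-1)}}{\delta_{0n}\delta_{n(n-1)}} - \frac{\delta_{(n-2)(n-1)}}{\delta_{(n-2)n}\delta_{n(n-1)}}$ as $\frac{\delta_{0(n-2)}}{\delta_{0n}\delta_{n(n-2)}}$, reducing the claim to the theorem for the $n$-gon $A_0,A_1,\ldots,A_{n-2},A_n$. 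This route bypasses trigonometry entirely, at the cost of heavier polygon relabelling.
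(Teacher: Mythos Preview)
The paper does not prove this statement at all: it is quoted from Gregorac \cite[Theorem~10]{key3} and used as a black box in the proof of Theorem~\ref{thm28}. So there is nothing to compare against.

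Both of your approaches are correct. The cotangent telescoping is clean once the angular parametrisation is set up; the only delicate point, as you note, is checking that the signs of $\sin(\theta_i-\theta_j)$ cancel pairwise, which they do provided the $\theta_k$ are chosen in a common interval of length $\pi$ consistent with the cyclic order on the circle. Your inductive alternative via Ptolemy is also correct and perhaps more robust: the Ptolemy relation for $A_0A_{n-2}A_{n-1}A_n$ gives exactly $\frac{\delta_{0(n-1)}}{\delta_{0n}\delta_{n(n-1)}} - \frac{\delta_{(n-2)(n-1)}}{\delta_{(n-2)n}\delta_{n(n-1)}} = \frac{\delta_{0(n-2)}}{\delta_{0n}\delta_{n(n-2)}}$ after dividing by $\delta_{0n}\delta_{(n-1)n}\delta_{(n-2)n}$, and this peels off the last summand as you describe. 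One small remark: the natural base case for that induction is $n=2$, which is a tautology, rather than $n=3$; the case $n=3$ is indeed Ptolemy but is already covered by the inductive step applied to $n=3$ together with the trivial $n=2$ case.
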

\begin{theorem} \cite[Theorem 12]{key3}
Let $n \geq 2$. Suppose $P = (A_0, A_1,..., A_{2n-1})$ is a cyclic $2n$-gon. Let $Q$ be the $n$-gon with vertices $(A_0, A_2, ..., A_{2(n-1)})$. If $n$ is even, then the relation 
\begin{equation}
    det(|A_{2i-1}A_{2(j-1)}|^d)_{i,j=1} ^{n}= 0
    \label{detid}
\end{equation}
holds for $d =
0,2,...,n- 2$.
\label{thm25}
\end{theorem}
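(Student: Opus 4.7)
The plan is to show that the matrix $M_d := (|A_{2i-1}A_{2(j-1)}|^d)_{i,j=1}^n$ has rank strictly less than $n$ whenever $d$ is an even integer in $\{0,2,\ldots,n-2\}$, by factoring it as a product $M_d = UV^\top$ whose common inner dimension is $2k+1 < n$, where $d = 2k$. Cyclicity of $P$ is precisely what makes such a rank factorization available.

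Write $d = 2k$. Since $P$ is cyclic, I parametrize its $2n$ vertices on the circumscribed circle of radius $R$ as $A_t = R(\cos\theta_t,\sin\theta_t)$, which yields the standard identities
$$|A_sA_t|^2 = 4R^2\sin^2\bigl((\theta_s-\theta_t)/2\bigr), \qquad |A_sA_t|^{2k} = (4R^2)^k\sin^{2k}\bigl((\theta_s-\theta_t)/2\bigr).$$
The elementary power-reduction identity expresses $\sin^{2k}\alpha$ explicitly as a linear combination of $1, \cos 2\alpha, \cos 4\alpha, \ldots, \cos 2k\alpha$. Applying it with $\alpha = (\theta_s-\theta_t)/2$ writes $|A_sA_t|^{2k}$ as a linear combination of $1$ and of $\cos(m(\theta_s-\theta_t))$ for $m = 1, \ldots, k$. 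Each such cosine splits further, via the identity $\cos(m(\theta_s-\theta_t)) = \cos(m\theta_s)\cos(m\theta_t)+\sin(m\theta_s)\sin(m\theta_t)$, into two summands of separable form $f(\theta_s)\,g(\theta_t)$, and the constant contributes one more, giving in total at most $2k+1$ separable terms.

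Specializing $s = 2i-1$ and $t = 2j-2$ therefore presents $M_d$ as a matrix whose $(i,j)$-entry is a sum of at most $2k+1$ products of a function of $i$ alone and a function of $j$ alone. Equivalently $M_d = UV^\top$ with $U, V$ both of size $n \times (2k+1)$, so $\mathrm{rank}(M_d) \leq 2k+1$. The assumption $d = 2k \leq n-2$ then gives $2k+1 \leq n-1 < n$, and hence $\det(M_d) = 0$. The parity hypothesis on $n$ is used only to guarantee that $n-2$ itself is even and so belongs to the list $d = 0, 2, \ldots, n-2$; the argument itself is uniform across even $d$, and the reason we must restrict to even exponents is that for odd $d$ the entries $|A_sA_t|^d$ involve a square root and do not admit the Chebyshev-type separable expansion on which the argument relies.

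The main (and essentially only) obstacle is the bookkeeping needed to verify that the Chebyshev expansion of $\sin^{2k}$ delivers exactly the $2k+1$ separable summands claimed. No deeper geometric input beyond the angular parametrization is required; in particular Ptolemy-type relations among the chord lengths, which one might expect to invoke, are not needed because the rank drop is already forced at the level of the circle parametrization.
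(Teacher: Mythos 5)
The paper does not prove this statement at all: Theorem \ref{thm25} is imported verbatim from Gregorac \cite[Theorem 12]{key3} and used as a black box, so there is no internal proof to compare against. Your argument, however, is correct and gives a complete, self-contained justification. Writing $d=2k$, the chain $|A_sA_t|^{2}=4R^2\sin^2\bigl((\theta_s-\theta_t)/2\bigr)$, the power-reduction of $\sin^{2k}\alpha$ into a linear combination of $1,\cos 2\alpha,\dots,\cos 2k\alpha$, and the addition formula $\cos\bigl(m(\theta_s-\theta_t)\bigr)=\cos(m\theta_s)\cos(m\theta_t)+\sin(m\theta_s)\sin(m\theta_t)$ do exhibit each entry as a sum of $2k+1$ products of a function of the row index with a function of the column index, so $M_d=UV^\top$ with inner dimension $2k+1=d+1\le n-1<n$ and $\det M_d=0$. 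The bound is sharp in the sense that it uses only the circle parametrization, and your observation that the argument is insensitive to which vertices are selected (odd-indexed versus even-indexed plays no role beyond ensuring $s\neq t$) and to the parity of $n$ is also accurate; the evenness hypothesis merely makes the list $d=0,2,\dots,n-2$ well formed. This is a cleaner and more elementary route than one might fear from the geometric packaging of Gregorac's statement: as you note, no Ptolemy-type chord relations are needed, only the Gram-type rank collapse forced by the one-dimensional angular parametrization. The one piece of bookkeeping you flag as remaining -- that the Chebyshev expansion yields exactly $k+1$ cosine terms and hence $2k+1$ separable summands -- is standard and poses no obstacle.
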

Note that the identity \eqref{detid} involves distances between odd-numbered and even-numbered vertices only.
\begin{theorem}For four distinct points $A_i, A_j, A_k, A_l$, lying on a circle, in an anticlockwise order, it holds that 
\[
\begin{vmatrix}
     x_{kl} & S_{ikl} & S_{jkl}\\ 
     -x_{ij} & S_{ijk} & S_{ijl}\\
     0 & x_{ik} & x_{jl} 
\end{vmatrix} = 
\begin{vmatrix}
     x_{kl} & S_{ikl} & S_{jkl}\\ 
     x_{ij} & S_{ijl} & S_{ijk}\\
     0 & x_{il} & x_{jk} 
\end{vmatrix} = \begin{vmatrix}
     x_{jk} & S_{ijk} & S_{jkl}\\ 
     -x_{il} & S_{ikl} & S_{ijl}\\
     0 & x_{ik} & x_{jl} 
\end{vmatrix} = 0,\]
where the entries are as defined in~\eqref{eqx} and~\eqref{eqs}.
\label{thm26}
\end{theorem}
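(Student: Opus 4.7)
My plan is to reduce all three determinant identities to a single classical identity for a cyclic quadrilateral, namely the second (ratio) form of Ptolemy's theorem. Since $A_i, A_j, A_k, A_l$ lie on a common circle of radius $R$ in anticlockwise order, every triple among them is also anticlockwise, so the Heron--circumradius formula $T = \frac{abc}{4R}$ gives
\begin{equation*}
S_{pqr} \;=\; \frac{|A_pA_q|\,|A_qA_r|\,|A_pA_r|}{R}
\end{equation*}
for each triple $\{p,q,r\} \subset \{i,j,k,l\}$. Writing $d_{pq} = \sqrt{x_{pq}} = |A_pA_q|$, every entry of each matrix becomes an explicit monomial in the six chord lengths $d_{ij}, d_{ik}, d_{il}, d_{jk}, d_{jl}, d_{kl}$.

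I would then expand each determinant along its first column (which conveniently has a $0$ in the bottom slot each time), substitute the chord-length formulas for the $S$-entries, and factor out the common nonzero chord-length monomial. In each of the three cases I expect to be left with the same bracketed expression (up to a harmless global sign)
\begin{equation*}
d_{jl}\bigl(d_{ij}d_{il} + d_{jk}d_{kl}\bigr) \;-\; d_{ik}\bigl(d_{ij}d_{jk} + d_{kl}d_{il}\bigr),
\end{equation*}
where $d_{ij}, d_{jk}, d_{kl}, d_{il}$ are the four sides and $d_{ik}, d_{jl}$ the two diagonals of the cyclic quadrilateral $A_iA_jA_kA_l$. This bracket vanishes by the second form of Ptolemy's theorem,
\begin{equation*}
\frac{d_{ik}}{d_{jl}} \;=\; \frac{d_{ij}d_{il} + d_{jk}d_{kl}}{d_{ij}d_{jk} + d_{kl}d_{il}},
\end{equation*}
which itself follows from the classical Ptolemy identity $d_{ik}d_{jl} = d_{ij}d_{kl} + d_{jk}d_{il}$ combined with the similar-triangle relations given by the inscribed-angle theorem.

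The main obstacle is purely bookkeeping: I have to track carefully the different signs and index patterns in the leading column of the three matrices ($-x_{ij}$ in the first, $+x_{ij}$ in the second, $-x_{il}$ in the third) and verify that the three superficially different resulting brackets all collapse to the same second-Ptolemy expression displayed above, rather than to different-looking rearrangements. Apart from this, every step is a routine substitution and factorisation, so once the formula $S_{pqr} = d_{pq}d_{qr}d_{pr}/R$ is in place the three identities come out uniformly.
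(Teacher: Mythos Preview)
Your proposal is correct and is essentially the paper's own argument run in the opposite direction: the paper starts from Smith's alternating-sum identity for $n=4$, which after clearing denominators is exactly your four-term chord relation (equivalently Ptolemy's second theorem), then multiplies by suitable chord monomials and uses $T=abc/(4R)$ to produce the three determinant identities, whereas you expand the determinants first and factor down to that same relation. The underlying content---the circumradius/area formula together with the single chord identity $d_{jl}(d_{ij}d_{il}+d_{jk}d_{kl})=d_{ik}(d_{ij}d_{jk}+d_{kl}d_{il})$---is identical in both approaches.
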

\begin{proof}
Let $\delta_{pq}:= |A_pA_q|$, for $p, q \in \{i, j, k, l\}$. Then, from Theorem ~\ref{thm009}, it follows that \begin{equation*}
    \frac{1}{\delta_{ij}\delta_{ik}\delta_{il}} - \frac{1}{\delta_{ji}\delta_{jk}\delta_{jl}} + \frac{1}{\delta_{ki}\delta_{kj}\delta_{kl}} - \frac{1}{\delta_{li}\delta_{lj}\delta_{lk}} = 0,
\end{equation*}
which is equivalent to 
\begin{equation}
\delta_{jk}\delta_{jl}\delta_{kl} - \delta_{ik}\delta_{il}\delta_{kl} + \delta_{ij}\delta_{il}\delta_{jl} - \delta_{ij}\delta_{ik}\delta_{jk} = 0.
\label{eq17}
\end{equation}
\noindent
Multiplying equation ~\eqref{eq17} by $\frac{\delta_{ij}\delta_{ik}\delta_{jl}\delta_{kl}}{R}$, where $R$ is the radius of the circle, and using  the fact that four times signed area of a triangle is positive if its vertices are oriented anticlockwise, we get the following:
\begin{equation}
    S_{ijk}x_{jl}x_{kl} - S_{ijl}x_{ik}x_{kl} + S_{ikl}x_{ij}x_{jl} - S_{jkl}x_{ij}x_{ik} = 0.
    \label{eq18}
\end{equation}
Note that, here we also used the fact that the area $T$ of a triangle whose side lengths are $a, b, c$ and the radius of its circumscribed circle is $R$,  can be computed using the formula $T = \frac{abc}{4R
}$.\\ \\
Now, from ~\eqref{eq18}, it follows that 
\begin{equation}
    x_{kl}(S_{ijk}x_{jl} - S_{ijl}x_{ik}) + x_{ij}(S_{ikl}x_{jl} - S_{jkl}x_{ik}) = 0,
\end{equation}
which is equivalent to 
\[
\begin{vmatrix}
     x_{kl} & S_{ikl} & S_{jkl}\\ 
     -x_{ij} & S_{ijk} & S_{ijl}\\
     0 & x_{ik} & x_{jl} 
\end{vmatrix} = 0.\]
Similarly, by multiplying equation \eqref{eq17} by $\frac{\delta_{ij}\delta_{il}\delta_{jk}\delta_{kl}}{R}$ and $\frac{\delta_{ik}\delta_{il}\delta_{jk}\delta_{jl}}{R}$, respectively, we get that 
\[\begin{vmatrix}
     x_{kl} & S_{ikl} & S_{jkl}\\ 
     x_{ij} & S_{ijl} & S_{ijk}\\
     0 & x_{il} & x_{jk} 
\end{vmatrix} = 0,\] and
\[\begin{vmatrix}
     x_{jk} & S_{ijk} & S_{jkl}\\ 
     -x_{il} & S_{ikl} & S_{ijl}\\
     0 & x_{ik} & x_{jl} 
\end{vmatrix} = 0.\]
\end{proof}
\begin{corollary}
Let $P = (A_1, ..., A_n)$ be a cyclic $n$-gon, with anticlockwise ordering of vertices. Then some $3 \times 3$ determinants related to the diamonds of the corresponding polygonal Heronian frieze vanish - namely, 
for every diamond of the frieze, as shown in Figure \ref{fig8}, the following holds:
\[
\begin{vmatrix}
     x_{b(b+1)} & S_{ab(b+1)} & S_{(a+1)b(b+1)}\\ 
     -x_{a(a+1)} & S_{a(a+1)b} & S_{a(a+1)(b+1)}\\
     0 & x_{ab} & x_{(a+1)(b+1)} 
\end{vmatrix} = 0; \]
\[
\begin{vmatrix}
     x_{b(b+1)} & S_{ab(b+1)} & S_{(a+1)b(b+1)}\\ 
     x_{a(a+1)} & S_{a(a+1)(b+1)} & S_{a(a+1)b}\\
     0 & x_{a(b+1)} & x_{(a+1)b} 
\end{vmatrix} = 0;\]
\[\begin{vmatrix}
     x_{(a+1)b} & S_{a(a+1)b} & S_{(a+1)b(b+1)}\\ 
     -x_{a(b+1)} & S_{ab(b+1)} & S_{a(a+1)(b+1)}\\
     0 & x_{ab} & x_{(a+1)(b+1)} 
\end{vmatrix} = 0.\]
Furthermore, the corresponding determinants vanish for all the diamonds of the plane polygonal Heronian frieze as well.
\label{cor261}
\end{corollary}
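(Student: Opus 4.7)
The plan is to deduce the corollary as a direct specialization of Theorem~\ref{thm26} to the quadruples of vertices that index the diamonds, together with a short case check for degenerate diamonds. By Remark~\ref{rem110}, every diamond of a polygonal Heronian frieze corresponds to a quadruple $(A_a,A_{a+1},A_b,A_{b+1})$ of vertices of the polygon $P$, and Figure~\ref{fig8} shows that the eight non-dashed entries of that diamond are precisely the $x_{\ast\ast}$ and $S_{\ast\ast\ast}$ appearing as entries of the three $3\times 3$ matrices in the corollary. Hence, after choosing an anticlockwise labelling $(i,j,k,l)$ of the four cyclic points $A_a,A_{a+1},A_b,A_{b+1}$, the three determinants of the corollary coincide (up to the sign manipulations below) with the three determinants of Theorem~\ref{thm26} with $(i,j,k,l)=(a,a+1,b,b+1)$.

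The first step would be to handle the generic case in which the four vertices are distinct and already appear in anticlockwise order around the circle, which happens whenever $1\le a<b\le n$ and $b\notin\{a-1,a+1\}\pmod n$. In that case I would set $(i,j,k,l)=(a,a+1,b,b+1)$ in Theorem~\ref{thm26} and read off the three claimed vanishings verbatim. The second step is to treat the case where the four vertices are still distinct but the natural labelling is not already anticlockwise (for instance when $a>b$ or when the pair $\{a,b\}$ straddles the cyclic gap $n\!-\!1$). I would apply Theorem~\ref{thm26} to the relabelled anticlockwise tuple and then recover the corollary's matrices from those of Theorem~\ref{thm26} by row swaps, together with the identities $x_{ij}=x_{ji}$ and the symmetries of $S_{ijk}$ recalled in Remark~\ref{rem13}; each such operation changes the determinant only by a sign, so the vanishing is preserved.

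The third step is to verify the boundary diamonds at the top and bottom rows of the polygonal frieze, corresponding to $b=a-1$ and $b=a+1\pmod n$ respectively, in which two of the four vertices coincide. Here the boundary conditions \eqref{bound} kill several entries of the matrices. For example, in the top row $b+1\equiv a$, so $S_{ab(b+1)}=S_{a(a-1)a}=0$ and $S_{a(a+1)(b+1)}=S_{a(a+1)a}=0$; expanding the first determinant along the first column then yields
\[
x_{(a-1)a}\,S_{a(a+1)(a-1)}\,x_{(a+1)a}\;-\;S_{(a+1)(a-1)a}\,x_{a(a+1)}\,x_{a(a-1)},
\]
which vanishes after applying $x_{ij}=x_{ji}$ and the cyclic identity $S_{(a+1)(a-1)a}=S_{a(a+1)(a-1)}$ from Remark~\ref{rem13}. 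The remaining two determinants, as well as the bottom row case $b=a+1$, are disposed of analogously.

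Finally, for the plane polygonal Heronian frieze the only additional diamonds are the gluing diamonds of Figure~\ref{fig9}, which correspond to the quadruples $(A_a,A_{a+1},A_a,A_{a+1})$. Here only two distinct vertices appear, so even more entries vanish by \eqref{bound}, and direct substitution shows that each of the three matrices has either a zero row or two proportional rows, hence a vanishing determinant. I expect the main obstacle to be purely bookkeeping: tracking the various cyclic relabellings of $(a,a+1,b,b+1)$ and carefully verifying each of the three determinants in each of the degenerate configurations, rather than any genuinely new geometric input beyond Theorem~\ref{thm26}.
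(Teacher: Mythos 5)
Your proposal is correct and follows essentially the same route as the paper: generic diamonds are handled by direct appeal to Theorem~\ref{thm26}, and the top-row, bottom-row and gluing diamonds are checked by hand using the boundary conditions \eqref{bound} and the symmetries of Remark~\ref{rem13}. The only difference is that you explicitly worry about relabelling the quadruple $(A_a,A_{a+1},A_b,A_{b+1})$ into anticlockwise order, a point the paper leaves implicit (and which is automatic here, since such quadruples are always in anticlockwise cyclic order on the circle).
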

\begin{proof}
For all the diamonds of the frieze, except for those in the top row and the bottom row, the statement follows from Theorem \ref{thm26}. \\
The diamonds in the bottom row of the frieze, as stated in Remark \ref{rem110}, correspond to the quadruples of the form $(A_a,A_{a+1},A_b,A_{b+1})$, where $b = a+1$, so we have that 
\[
\begin{vmatrix}
     x_{b(b+1)} & S_{ab(b+1)} & S_{(a+1)b(b+1)}\\ 
     -x_{a(a+1)} & S_{a(a+1)b} & S_{a(a+1)(b+1)}\\
     0 & x_{ab} & x_{(a+1)(b+1)} 
\end{vmatrix} =
\begin{vmatrix} x_{a+1,a+2} & S_{a,a+1,a+2} & 0\\ 
     -x_{a,a+1} & 0 & S_{a,a+1,a+2}\\
     0 & x_{a,a+1} & x_{a+1,a+2} 
\end{vmatrix},
\]
and one easily checks (by expanding along the first row) that the latter determinant equals $0$. \\
 We also have that \[
\begin{vmatrix}
     x_{b(b+1)} & S_{ab(b+1)} & S_{(a+1)b(b+1)}\\ 
     x_{a(a+1)} & S_{a(a+1)(b+1)} & S_{a(a+1)b}\\
     0 & x_{a(b+1)} & x_{(a+1)b} 
\end{vmatrix} = \begin{vmatrix}
     x_{a+1,a+2} & S_{a,a+1,a+2} & 0\\ 
     x_{a,a+1} & S_{a,a+1,a+2} & 0\\
     0 & x_{a,a+2} & 0
\end{vmatrix} = 0,\]
 and
 \[\begin{vmatrix}
     x_{(a+1)b} & S_{a(a+1)b} & S_{(a+1)b(b+1)}\\ 
     -x_{a(b+1)} & S_{ab(b+1)} & S_{a(a+1)(b+1)}\\
     0 & x_{ab} & x_{(a+1)(b+1)} 
\end{vmatrix} = \begin{vmatrix}
     0 & 0 & 0\\ 
     -x_{a,a+2} & S_{a,a+1,a+2} & S_{a,a+1,a+2}\\
     0 & x_{a,a+1} & x_{a+1,a+2} 
\end{vmatrix} = 0. \]
As for the diamonds of the top row of the frieze, it has already been stated in Remark \ref{rem110}, that they correspond to the quadruples of the form $(A_a,A_{a+1},A_b,A_{b+1})$, where $b = a-1$. Checking that the determinants vanish in this case as well is analogous to checking that they vanish for the diamonds of the bottom row of the frieze. \\ In order to prove the statement for the plane polygonal Heronian frieze, we only  need to show that the determinants vanish for gluing diamonds as well. As already stated, they correspond to the quadruples of vertices of the form $(A_a,A_{a+1},A_b,A_{b+1})$, where $a = b$, so we have that
\[
\begin{split}
\begin{vmatrix}
     x_{b(b+1)} & S_{ab(b+1)} & S_{(a+1)b(b+1)}\\ 
     -x_{a(a+1)} & S_{a(a+1)b} & S_{a(a+1)(b+1)}\\
     0 & x_{ab} & x_{(a+1)(b+1)} 
\end{vmatrix} &= 
\begin{vmatrix}
     x_{a(a+1)} & S_{aa(a+1)} & S_{(a+1)a(a+1)}\\ 
     -x_{a(a+1)} & S_{a(a+1)a} & S_{a(a+1)(a+1)}\\
     0 & x_{aa} & x_{(a+1)(a+1)} 
\end{vmatrix}\\ & = 
\begin{vmatrix}
     x_{a(a+1)} & 0 & 0\\ 
     -x_{a(a+1)} & 0 & 0\\
     0 & 0 & 0 
\end{vmatrix} = 0.
\end{split}\]
It is an easy check that the second and the third determinant of the statement vanish in the case of gluing diamonds as well.
\end{proof}
\begin{remark}
Note that vanishing of the first determinant from Corollary \ref{cor261} can also be interpreted in the following way:
\[
\begin{vmatrix}
     x_{b(b+1)} & S_{ab(b+1)} & S_{(a+1)b(b+1)}\\ 
     -x_{a(a+1)} & S_{a(a+1)b} & S_{a(a+1)(b+1)}\\
     x_{(a+1)b} & x_{ab} & x_{(a+1)(b+1)} 
\end{vmatrix} = 0,\]
for the diamonds of the bottom row of the frieze, or 
\[
\begin{vmatrix}
     x_{b(b+1)} & S_{ab(b+1)} & S_{(a+1)b(b+1)}\\ 
     -x_{a(a+1)} & S_{a(a+1)b} & S_{a(a+1)(b+1)}\\
     x_{a(b+1)} & x_{ab} & x_{(a+1)(b+1)} 
\end{vmatrix} = 0, \]
for the diamonds of the top row of the frieze. \\ \\
On the other hand, vanishing of the second determinant from Corollary \ref{cor261} can be interpreted as:
\[
\begin{vmatrix}
     x_{b(b+1)} & S_{ab(b+1)} & S_{(a+1)b(b+1)}\\ 
     x_{a(a+1)} & S_{a(a+1)(b+1)} & S_{a(a+1)b}\\
     x_{ab} & x_{a(b+1)} & x_{(a+1)b} 
\end{vmatrix} = 0,\]
or
\[
\begin{vmatrix}
     x_{b(b+1)} & S_{ab(b+1)} & S_{(a+1)b(b+1)}\\ 
     x_{a(a+1)} & S_{a(a+1)(b+1)} & S_{a(a+1)b}\\
     x_{(a+1)(b+1)} & x_{a(b+1)} & x_{(a+1)b} 
\end{vmatrix} = 0,\]
for the gluing diamonds of the corresponding plane polygonal Heronian frieze. 
\end{remark}
\begin{theorem}
Let $P = (A_1, A_2, ..., A_{m+1})$ be a cyclic $(m+1) - gon$, with anticlockwise ordering of the vertices, where $m \geq 2$ is an integer. Then 
\begin {equation}
S_{1,m,m+1}\prod_{i=2}^{m-1} x_{i,m+1} = \sum_{j=1}^{m-1} S_{j,j+1,m+1} \frac{\prod_{k=1}^{m} x_{k,m+1}}{x_{j,m+1}x_{j+1,m+1}},
\end{equation}
where the entries $S_{***}$ and $x_{**}$ are the entries of the corresponding polygonal Heronian frieze.
\label{thm28}
\end{theorem}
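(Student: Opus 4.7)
The plan is to deduce the desired identity from Theorem \ref{thm24} (Gregorac's relation) by clearing denominators and translating lengths of chords into the frieze entries $x_{ij}$ and $S_{ijk}$ via the classical formula $T = \frac{abc}{4R}$, where $R$ is the circumradius of $P$.

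Setting $d_{ij} := |A_iA_j|$ and relabeling so that Gregorac's indexing (which starts at $0$) is applied to $(A_1, A_2, \ldots, A_{m+1})$, Theorem \ref{thm24} yields
\[
\frac{d_{1,m}}{d_{1,m+1}\,d_{m,m+1}} = \sum_{j=1}^{m-1} \frac{d_{j,j+1}}{d_{j,m+1}\,d_{j+1,m+1}}.
\]
I then multiply both sides by $\prod_{k=1}^{m} d_{k,m+1}^2$. On the left, two of these factors are consumed by the denominator $d_{1,m+1}d_{m,m+1}$, producing $d_{1,m}\,d_{1,m+1}\,d_{m,m+1} \cdot \prod_{k=2}^{m-1} d_{k,m+1}^2$; on the right, the $j$-th summand becomes $d_{j,j+1}\,d_{j,m+1}\,d_{j+1,m+1} \cdot \frac{\prod_{k=1}^{m} d_{k,m+1}^2}{d_{j,m+1}^2\, d_{j+1,m+1}^2}$.

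Because the vertices $A_1, \ldots, A_{m+1}$ are ordered anticlockwise on the circumcircle, every triangle $A_iA_jA_k$ with $i<j<k$ inherits this orientation, so the formula $T=\frac{abc}{4R}$ combined with Remark \ref{rem13} yields $S_{ijk} = \frac{d_{ij}\,d_{ik}\,d_{jk}}{R}$ with positive sign. Substituting $d_{1,m}d_{1,m+1}d_{m,m+1} = R\,S_{1,m,m+1}$ and $d_{j,j+1}d_{j,m+1}d_{j+1,m+1} = R\,S_{j,j+1,m+1}$, together with $x_{i,m+1} = d_{i,m+1}^2$, transforms both sides of the equation into exactly the target expression multiplied by $R$; cancelling the common factor $R$ gives the claim.

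The argument is thus essentially a bookkeeping exercise once the correct multiplier $\prod_{k=1}^{m} d_{k,m+1}^2$ is identified. The only point requiring care is verifying that the formula $S_{ijk} = d_{ij}d_{ik}d_{jk}/R$ carries a positive sign for every triangle that appears in the identity; this follows at once from the global anticlockwise ordering of the vertices of $P$ and is the sole geometric input beyond Theorem \ref{thm24} itself.
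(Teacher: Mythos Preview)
Your proof is correct and follows essentially the same approach as the paper: both start from Gregorac's relation (Theorem~\ref{thm24}), multiply through by the product of squared chord lengths $\prod_{k=1}^{m} d_{k,m+1}^2$ (the paper does this in two steps, first clearing denominators and then multiplying by $\frac{1}{R}\prod_{l=1}^{m}\delta_{l,m+1}$, which amounts to the same thing), and then invoke $S_{ijk} = d_{ij}d_{ik}d_{jk}/R$ together with $x_{i,m+1}=d_{i,m+1}^2$ to convert everything into frieze entries. Your explicit remark on the sign of $S_{ijk}$ via the anticlockwise ordering matches the paper's implicit use of the same fact.
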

\begin{proof}
Using Theorem \ref{thm24}, we have that 
\begin{equation*}
    \frac{\delta_{1,m}}{\delta_{1,m+1}\delta_{m,m+1}} = \frac{\delta_{12}}{\delta_{1,m+1}\delta_{2,m+1}} + \frac{\delta_{23}}{\delta_{2,m+1}\delta_{3,m+1}} + \cdots + \frac{\delta_{m-1,m}}{\delta_{m-1,m+1}\delta_{m,m+1}},
\end{equation*}
which is equivalent to 
\begin{equation}
    \delta_{1m}\prod_{i=2}^{m-1} \delta_{i,m+1} = \sum_{j=1}^{m-1} \delta_{j,j+1}\frac{\prod_{k=1}^{m} \delta_{k,m+1}}{\delta_{j,m+1}\delta_{j+1,m+1}}.
    \label{eq21}
\end{equation}
Multiplying equation \eqref{eq21} by $\frac{\prod_{l=1}^{m} \delta_{l,m+1}}{R}$, where $R$ is the radius of the circumscirbed circle of the $(m+1)$-gon, we get
\begin{equation}
\frac{\delta_{1m}\prod_{i=2}^{m-1} \delta_{i,m+1} \prod_{l=1}^{m} \delta_{l,m+1}}{R} = \sum_{j=1}^{m-1} \frac{\delta_{j,j+1}\prod_{k=1}^{m} \delta_{k,m+1}\prod_{l=1}^{m} \delta_{l,m+1}}{\delta_{j,m+1}\delta_{j+1,m+1}R}.
\label{eq22}
\end{equation}
On the right hand side of \eqref{eq22}, note that the $j$-th summand, for $j \in \{1,2,..,m-1\}$, is equal to 
\begin{equation*}
\begin{split}
    & \frac{\delta_{j,j+1}\left(\delta_{1,m+1}\delta_{2,m+1}\cdots\delta_{j,m+1}\delta_{j+1,m+1}\cdots \delta_{m,m+1}\right)\left(\delta_{1,m+1}\delta_{2,m+1} \cdots \delta_{j,m+1}\delta_{j+1,m+1} \cdots \delta_{m,m+1}\right)}{\delta_{j,m+1}\delta_{j+1,m+1}R} \\
   & \qquad \qquad \qquad= \frac{\delta_{j,j+1}\delta_{j,m+1}\delta_{j+1,m+1}}{R} \prod_{\substack{k=1 \\ k \neq j,j+1}}^m \delta_{k,m+1}^2 \\ & \qquad \qquad \qquad= S_{j,j+1,m+1}\frac{\prod_{k=1}^{m}x_{k,m+1}}{x_{j,m+1}x_{j+1,m+1}}.
    \end{split}
    \end{equation*}
From here, it follows that the right hand side of the equality \eqref{eq22} is equal to $\sum_{j=1}^{m-1} S_{j,j+1,m+1} \frac{\prod_{k=1}^{m} x_{k,m+1}}{x_{j,m+1}x_{j+1,m+1}}$. \\ \\
As for the left hand side, note that 
\begin{equation*}
\begin{split}
    \frac{\delta_{1m}\prod_{i=2}^{m-1} \delta_{i,m+1} \prod_{l=1}^{m} \delta_{l,m+1}}{R} &= \frac{\delta_{1m}\delta_{1,m+1}\delta_{m,m+1}}{R}  (\delta_{2,m+1}\delta_{3,m+1}...\delta_{m-1,m+1})^2 \\ & = S_{1,m,m+1} \prod_{i=2}^{m-1} x_{i,m+1},
    \end{split}
\end{equation*}
and the statement follows.
\end{proof}

\begin{corollary}
 Let $P = (A_1, A_2, ..., A_n)$ be a cyclic $n$-gon, with anticlockwise ordering of the vertices, and let $F$ be the corresponding polygonal Heronian frieze. Let $m$ be an integer such that $2 \leq m \leq n-1$, and $q$ and $r$ integers such that $1 \leq q,r \leq n$, and $q, q+1, ..., q+m-2, r, r+1$ are distinct. Then 
 \begin{equation}
 \begin{split}
     S_{q,r,r+1}\prod_{k=q+1}^{q+m-2}x_{k,r+1}   &= S_{q,q+1,r+1}x_{q+2,r+1}x_{q+3,r+1}\cdots x_{q+m-2,r+1}x_{r,r+1}  \\ & \qquad \qquad + S_{q+1,q+2,r+1}x_{q,r+1}x_{q+3,r+1}\cdots x_{q+m-2,r+1}x_{r,r+1} \\  & \qquad \qquad \qquad \qquad \qquad \qquad \vdots  \\ & \qquad \qquad +  S_{q+m-3,q+m-2,r+1}x_{q,r+1}x_{q+1,r+1}\cdots x_{q+m-4,r+1}x_{r,r+1} \\  & \qquad \qquad + S_{q+m-2,r,r+1}x_{q,r+1}x_{q+1,r+1}\cdots x_{q+m-4,r+1}x_{q+m-3,r+1},
     \end{split}
     \label{eq23}
 \end{equation}
 where index addition is modulo $n$.
\begin{figure}

    \begin{tikzpicture}
 \node[red,very thick](n1) at (0,0) {$x_{q(r+1)}$};
 \node[red,very thick](n2) at (1,-1) {$S_{q(q+1)(r+1)}$};
 \node[red,very thick](n3) at (2,-2) {$x_{(q+1)(r+1)}$};
 \node[red,very thick] (n448) at (3,-3) {$S_{(q+1)(q+2)(r+1)}$};
 \draw[-] (n3) to (n448);
 \node[draw=none](n4) at (1,-3) {$S_{(q+1)r(r+1)}$};
 \node[draw=none] (n444) at (2,-6) {$x_{q+2,r}$};
\node[draw=none] (n4443) at (3,-7) {$S_{(q+2)(q+3)r}$}; 
\draw[-] (n444) to (n4443);
\node[draw=none] (n44439) at (4,-8) {$x_{q+3,r}$};
\draw[-] (n4443) to (n44439);
 \node[draw=none] (n445) at (3,-5) {$S_{(q+2)r(r+1)}$};
 \draw[densely dashed](n4)--(n445);
 \node[red,very thick] (n446) at (4,-4) {$x_{(q+2)(r+1)}$};
 
\node[draw=none] (n446677) at (5,-7) {$S_{(q+3)r(r+1)}$}; 
\draw[densely dashed](n445)--(n446677);
 \node[draw=none](n446688) at (6,-8) {$\ddots$};
  \node[draw=none](n14123) at (7,-9) {$\ddots$};
  \node[draw=none] (n10555) at (6.5,-10.5) {$x_{q+m-3,r}$};
  \node[draw=none] (n10556) at (7.5,-11.5) {$S_{q+m-3,q+m-2,r}$};
  \draw[-] (n10555) to (n10556);
 \node[draw=none](n1055778) at (8.5,-12.5) {$x_{q+m-2,r}$}; 
 \node[red,very thick] (n11224455) at (9.5,-11.5){$S_{q+m-2,r,r+1}$};
 \node[draw=none] (n14515) at (10.5,-12.5){$\ddots$};
 \node[red,very thick](n415151) at (11.5,-13.5){$x_{r,r+1}$};
 \draw[densely dashed](n14515)--(n415151);
 \draw[-] (n1055778) to (n11224455);
 \node[red,very thick](n1055799) at (10.5,-10.5) {$x_{q+m-2,r+1}$};
 \draw[-] (n11224455) to (n1055799);
 \draw[-] (n10556) to (n1055778);
   \node[draw=none](n10557) at (7.5,-9.5) {$S_{q+m-3,r,r+1}$};
   \draw[densely dashed](n11224455)--(n10557);
   \draw[-] (n10555) to (n10557);
  \node[red,very thick](n141516) at (8.5,-8.5) {$x_{q+m-3,r+1}$};  
  \node[red,very thick](n1055667) at (9.5,-9.5) {$S_{q+m-3,q+m-2,r+1}$};
  \draw[-] (n1055799) to (n1055667);
  \draw[-] (n141516) to (n1055667);
 \coordinate[label=\tiny ${\mathbf{q+m-3,q+m-2,r,r+1}}$] (c4) at (8.5,-10.9); 
  \draw[-] (n10557) to (n141516);
\draw[-] (n44439) to (n446677);
\coordinate[label=\tiny ${\mathbf{q+2,q+3,r,r+1}}$] (c3) at (4,-6.4);
 \node[red,very thick] (n888) at (5,-5) {$S_{(q+2)(q+3)(r+1)}$};
 \draw[-] (n446) to (n888);
 \node[red,very thick] (n889) at (6,-6) {$x_{(q+3)(r+1)}$};
 \draw[-] (n889) to (n446677);
 \draw[-] (n888) to (n889);
 \draw[-] (n444) to (n445);
 \node[draw=none](n5) at (0,-4) {$x_{(q+1)r}$};
 \draw[-] (n445) to (n446);
\node[draw=none] (n447) at (1,-5) {$S_{(q+1)(q+2)r}$};
\draw[-] (n444) to (n447);
\draw[-] (n5) to (n447);
\draw[-] (n446) to (n448);
\coordinate[label=\tiny $ {\mathbf{q(q+1)r(r+1)}}$] (c1) at (0,-2.4);
\coordinate[label=\tiny${\mathbf{q+1,q+2,r,r+1}}$] (c2) at (2,-4.4);
 \node[draw=none](n6) at (-1,-3){$S_{q(q+1)r}$};
 \node[draw=none](n7) at (-2,-2){$x_{qr}$};
 \node[red,very thick](n8) at (-1,-1){$S_{qr(r+1)}$};

 \node[draw=none](n11) at (-2,-0){$\ddots$};
 \node[red,very thick] (n14141) at (-3,1){$x_{r,r+1}$};
 \draw[densely dashed] (n11)--(n14141);
 
 \draw[-] (n1) to (n2);
 \draw[-] (n2) to (n3);
 \draw[-] (n3) to (n4);
 \draw[-] (n4) to (n5);
 \draw[-] (n5) to (n6);
 \draw[-] (n6) to (n7);
 \draw[-] (n7) to (n8);
 \draw[-] (n8) to (n1);
 \draw[densely dashed] (n4) -- (n8);
 \end{tikzpicture}
 \caption{}
\label{figg,pattern}
 \end{figure}
 \label{cor281}
 \end{corollary}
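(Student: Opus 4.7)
The plan is to apply Theorem \ref{thm28} directly to the $(m+1)$-gon
\[
P' := (A_q,\, A_{q+1},\, \ldots,\, A_{q+m-2},\, A_r,\, A_{r+1})
\]
obtained by selecting a subsequence of the vertices of $P$. The hypothesis that $q, q+1, \ldots, q+m-2, r, r+1$ are distinct modulo $n$ guarantees that $P'$ has $m+1$ distinct vertices, all lying on the circumscribed circle of $P$, so $P'$ is cyclic. Moreover, distinctness forces $r$ (and hence $r+1$) to lie in the cyclic arc strictly between $A_{q+m-2}$ and $A_q$ traversed anticlockwise, so the listed vertices of $P'$ appear in their natural anticlockwise cyclic order on the circle.

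Relabel the vertices of $P'$ as $B_1 := A_q$, $B_2 := A_{q+1}$, \ldots, $B_{m-1} := A_{q+m-2}$, $B_m := A_r$, $B_{m+1} := A_{r+1}$. Applying Theorem \ref{thm28} to $(B_1, \ldots, B_{m+1})$ gives
\[
\tilde{S}_{1,m,m+1}\prod_{i=2}^{m-1} \tilde{x}_{i,m+1} \;=\; \sum_{j=1}^{m-1} \tilde{S}_{j,j+1,m+1} \,\frac{\prod_{k=1}^{m} \tilde{x}_{k,m+1}}{\tilde{x}_{j,m+1}\,\tilde{x}_{j+1,m+1}},
\]
where the tildes denote squared distances and four times signed areas computed from the $B$-labelling. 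Because $x_{ij}$ and $S_{ijk}$ in \eqref{eqx} and \eqref{eqs} depend only on vertex coordinates, we have $\tilde{x}_{ij} = x_{\phi(i)\phi(j)}$ and $\tilde{S}_{ijk} = S_{\phi(i)\phi(j)\phi(k)}$, where $\phi(i) = q+i-1$ for $1 \le i \le m-1$, $\phi(m) = r$, and $\phi(m+1) = r+1$.

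Substituting via $\phi$, the left-hand side of the displayed identity becomes $S_{q,r,r+1}\prod_{k=q+1}^{q+m-2} x_{k,r+1}$, matching the left-hand side of \eqref{eq23}. For $1 \le j \le m-2$, the $j$-th summand on the right translates to $S_{q+j-1,q+j,r+1}$ multiplied by $x_{r,r+1}\prod_{k \in \{q,\ldots,q+m-2\}\setminus\{q+j-1,q+j\}} x_{k,r+1}$, while the summand with $j = m-1$ becomes $S_{q+m-2,r,r+1} \prod_{k=q}^{q+m-3} x_{k,r+1}$. Reading these off term by term reproduces the right-hand side of \eqref{eq23} exactly. The only subtlety lies in the index bookkeeping modulo $n$ and in confirming the anticlockwise order of $P'$ — both immediate from the distinctness hypothesis — so no genuine analytic obstacle arises.
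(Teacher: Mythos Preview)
Your proof is correct and follows precisely the paper's own approach: the paper's proof is the single sentence ``Applying Theorem \ref{thm28} to the distinct vertices $A_q, A_{q+1}, \ldots, A_{q+m-2}, A_r, A_{r+1}$ gives \eqref{eq23}.'' You have simply fleshed out the relabelling and the term-by-term translation, and also made explicit the verification that the selected sub-polygon is cyclic with anticlockwise ordering (needed for Theorem \ref{thm28} to apply), which the paper leaves implicit.
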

 \begin{proof}
 Applying Theorem \ref{thm28} to the distinct vertices $A_q, A_{q+1}, ..., A_{q+m-2}, A_r, A_{r+1}$ gives \eqref{eq23}.
 \end{proof}
 \begin{remark}
 Relation \eqref{eq23} involves entries of $m-2$ adjacent diamonds of $F$. As shown in Figure \ref{figg,pattern}, those are the diamonds corresponding to the quadruples of vertices $(A_q,A_{q+1},A_r,A_{r+1})$, $(A_{q+1},A_{q+2},A_r,A_{r+1})$, ... , $(A_{q+m-3},A_{q+m-2},A_r,A_{r+1})$, and the entries in red are specifically those that appear in the relation.
 \end{remark}
\begin{example}
Let $P = (A_1,A_2,...,A_{10})$ be a cyclic $10$-gon, with anticlockwise ordering of vertices. Let $m=5$, $q=3$, and $r=9$. Then, from Corollary \ref{cor281}, we have that
\begin{equation}
\begin{split}
    S_{3,9,10}x_{4,10}x_{5,10}x_{6,10} &= S_{3,4,10}x_{5,10}x_{6,10}x_{9,10} 
    + S_{4,5,10}x_{3,10}x_{6,10}x_{9,10}\\
    &+ S_{5,6,10}x_{3,10}x_{4,10}x_{9,10}
    + S_{6,9,10}x_{3,10}x_{4,10}x_{5,10}.
    \end{split}
    \label{eq25}
\end{equation}
Figure \ref{fig11} shows three adjacent diamonds whose entries are involved in \eqref{eq25}.
\begin{figure}
 \begin{tikzpicture}
 \node[red,very thick](n1) at (0,0) {$x_{3,10}$};
 \node[draw=none](n141) at (-2,-2) {$x_{39}$};
 \coordinate[label=${\mathbf{3,4,9,10}}$] (c1) at (0,-2.4);
 \node[draw=none](n114141) at (-1,-3) {$S_{349}$};
 \draw[-] (n141) to (n114141);
 \node[red,very thick](n3141) at (-1,-1) {$S_{3,9,10}$};
 \node[draw=none](n15156) at (-2,0){$\ddots$};
     \node[red,very thick](n595885) at (-3,1) {$x_{9,10}$};
     \draw[densely dashed](n15156)--(n595885);
 \draw[-](n1) to (n3141);
 \draw[-] (n141) to (n3141);
 \node[red,very thick](n2) at (1,-1) {$S_{3,4,10}$};
 \node[red,very thick](n3) at (2,-2) {$x_{4,10}$};
 \draw[-] (n2) to (n3);
 \draw[-] (n1) to (n2);
 \node[red,very thick] (n448) at (3,-3) {$S_{4,5,10}$};
 \draw[-] (n3) to (n448);
 \node[draw=none](n4) at (1,-3) {$S_{4,9,10}$};
 \draw[densely dashed] (n3141) -- (n4);
 \draw[-] (n4) to (n3);
 \node[draw=none] (n5261) at (0,-4) {$x_{49}$};
\coordinate[label=${\mathbf{4,5,9,10}}$] (c1) at (2,-4.4); 
 \draw[-] (n114141) to (n5261);
 \draw[-] (n4) to (n5261);
 \node[draw=none](n52141) at (1,-5) {$S_{459}$};
 \draw[-] (n5261) to (n52141);
 \node[draw=none] (n444) at (2,-6) {$x_{59}$};
 \coordinate[label=${\mathbf{5,6,9,10}}$] (c1) at (4,-6.4); 
 \draw[-](n52141) to (n444);
\node[draw=none] (n4443) at (3,-7) {$S_{569}$}; 
\draw[-] (n444) to (n4443);
\node[draw=none] (n44439) at (4,-8) {$x_{69}$};
\draw[-] (n4443) to (n44439);
 \node[draw=none] (n445) at (3,-5) {$S_{5,9,10}$};
 \draw[densely dashed] (n4) -- (n445);
 \draw[-] (n444) to (n445);
 \node[red,very thick] (n446) at (4,-4) {$x_{5,10}$};
 \draw[-] (n445) to (n446);
 \draw[-] (n448) to (n446);
 \node[red,very thick] (n1561) at (5,-5) {$S_{5,6,10}$};
 \draw[-] (n446) to (n1561);
 \node[red,very thick] (n50261) at (6,-6){$x_{6,10}$};
 \draw[-] (n1561) to (n50261);
\node[red,very thick] (n446677) at (5,-7) {$S_{6,9,10}$}; 
\node[draw=none](n18185195) at (6,-8){$\ddots$};
\node[red,very thick](n1481941) at (7,-9){$x_{9,10}$};
\draw[densely dashed](n18185195)--(n1481941);
\draw[densely dashed] (n445) -- (n446677);
\draw[-] (n44439) to (n446677);
\draw[-] (n446677) to (n50261);
\end{tikzpicture}
\caption{}
\label{fig11}
\end{figure}

\end{example}
\begin{theorem}
Let $n$ be a positive integer divisible by $4$, and let $P = (A_1, A_2, ..., A_n)$ be a cyclic $n$-gon in the complex plane. Then the following $\frac{n}{2} \times \frac{n}{2}$ determinants of the corresponding plane Heronian frieze vanish:
\[
\begin{vmatrix}
     x_{21} & x_{23} & x_{25} & \cdots & x_{2,n-1}\\ 
     x_{41} & x_{43} & x_{45} & \cdots & x_{4,n-1}\\
     \vdots & \vdots & \vdots & \vdots & \vdots\\
     
     x_{n1} & x_{n3} & x_{n5} & \cdots & x_{n,n-1}\\
\end{vmatrix} = 0.\]
\end{theorem}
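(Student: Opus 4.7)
The plan is to reduce the statement to Gregorac's determinant identity (Theorem \ref{thm25}) by a straightforward re-indexing of the vertices. The matrix in question has $(i,j)$ entry $x_{2i,2j-1} = |A_{2i}A_{2j-1}|^2$, that is, a squared distance between an even-indexed and an odd-indexed vertex. This is exactly the shape of matrix controlled by Theorem \ref{thm25}, up to a shift in the indexing convention (which runs from $0$ in that theorem but from $1$ in our polygon).

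First I would set $B_k := A_{k+1}$ for $k = 0, 1, \ldots, n-1$, so that $(B_0, B_1, \ldots, B_{n-1})$ is the same cyclic $n$-gon but labelled starting from $0$, matching the hypotheses of Theorem \ref{thm25}. Writing $n = 2m$ with $m = n/2$, the assumption $4 \mid n$ is precisely the condition that $m$ be even. With this shift, $B_{2i-1} = A_{2i}$ and $B_{2(j-1)} = A_{2j-1}$ for all $i,j \in \{1, \ldots, m\}$, so
$$|B_{2i-1}B_{2(j-1)}|^2 \;=\; |A_{2i}A_{2j-1}|^2 \;=\; x_{2i,\,2j-1},$$
which identifies the matrix in the statement with the $d=2$ instance of Gregorac's matrix.

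Next I would invoke Theorem \ref{thm25} for the cyclic $2m$-gon $(B_0, \ldots, B_{2m-1})$ at the exponent $d = 2$. Since $m$ is even, provided $m \geq 4$ (i.e.\ $n \geq 8$) the value $d = 2$ lies inside the admissible range $\{0, 2, \ldots, m-2\}$, so Theorem \ref{thm25} directly gives
$$\det\bigl(|B_{2i-1}B_{2(j-1)}|^2\bigr)_{i,j=1}^{m} \;=\; 0,$$
which is the desired vanishing after the identification above.

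I do not anticipate a real analytic obstacle: the entire argument is a recognition that the displayed matrix is a reparametrisation of Gregorac's. The only point that needs care is the indexing bookkeeping in the shift $A \mapsto B$ (to confirm that the parities of indices are preserved in the intended way) and the range check on $d$. The one boundary issue is the smallest case $n = 4$, in which $m = 2$ and the value $d = 2$ falls just outside the range $\{0, 2, \ldots, m-2\}$ allowed by Theorem \ref{thm25}; if that case is meant to be included it would need a separate ad hoc check (or, more naturally, the hypothesis tacitly assumes $n \geq 8$), and that is the only genuinely delicate point in the argument.
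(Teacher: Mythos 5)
Your proof is correct and is exactly the paper's argument: the paper's entire proof is the single sentence that the result follows from Theorem \ref{thm25} with $d=2$, and your re-indexing $B_k := A_{k+1}$ merely makes the identification of the two matrices explicit. Your caveat about $n=4$ (where $d=2$ falls outside the admissible range $\{0,2,\dots,\tfrac{n}{2}-2\}$ of Theorem \ref{thm25}) is a genuine observation that applies equally to the paper's own proof, since the statement as written admits $n=4$; strictly speaking both arguments need either the added hypothesis $n\ge 8$ or a separate treatment of that case.
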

\begin{proof}
The proof follows from Theorem \ref{thm25} by taking $d = 2$.
\end{proof}
\begin{example} Let $n=8$, and $P = (A_1, A_2, ..., A_8)$ be a cyclic $8$-gon in the complex plane. Then the following holds for the entries of the corresponding plane Heronian frieze:
\[
\begin{vmatrix}
     x_{21} & x_{23} & x_{25} &  x_{2,7}\\ 
     x_{41} & x_{43} & x_{45} & x_{4,7}\\
     
     x_{61} & x_{63} & x_{65} & x_{6,7}\\
     x_{81} & x_{83} & x_{85}  & x_{8,7}\\
\end{vmatrix} = 0.\]
The corresponding picture is given in Figure \ref{fig12}, where the entries highlighted in red are the ones appearing  in the determinant.
\begin{figure}
\begin{center}
\begin{tikzpicture}[xscale=1,yscale=1.3]
 \node[draw=none](n1) at (0,0) {$x_{12}$};
 \node[draw=none](n2) at (0.5,-0.5) {$S_{122}$};
    
 \draw[densely dashed] (n2) -- (n24252);
 \node[draw=none](n3) at (1,-1) {$x_{22}$};
 \node[draw=none](n2491) at (2,-2){$x_{32}$};
 \node[draw=none](n148181) at (2.5,-2.5){$S_{342}$};
 \draw[-](n2491) to (n148181);
 \node[red,very thick](n511511) at (0,-2) {$x_{21}$};
 \node[draw=none](n96191) at (-3,-2) {$\cdots$};
  \node[draw=none](n1131444) at (0.5,-2.5) {$S_{231}$};
  \draw[-](n511511) to (n1131444);
  \node[draw=none](n1431414) at (1,-3) {$x_{31}$};
   \node[draw=none](n14414141) at (1.5,-3.5) {$S_{341}$};
   \draw[-](n1431414) to (n14414141);
   \draw[-](n1431414) to (n1131444);
   \draw[-](n1431414) to (n14414141);
   \draw[densely dashed](n14414141) -- (n148181);
   \node[draw=none](n41151144) at (1.5,-2.5) {$S_{312}$};
    \draw[-](n41151144) to (n1431414); 
   \draw[-](n2491) to (n41151144);
 \node[draw=none](n144) at (-2,0) {$x_{81}$};
 \node[draw=none](n141) at (-1.5,-0.5) {$S_{811}$};
 \node[draw=none](n7) at (-1,-1){$x_{11}$};
 \node[draw=none](n401591510) at (-0.5,-1.5){$S_{121}$};
 \draw[-](n511511) to (n401591510) ;
 \draw[-](n7) to (n401591510);
 \node[draw=none](n8) at (-0.5,-0.5){$S_{112}$};
 \node[draw=none](n14151) at (0.5, -1.5){$S_{212}$};
 \draw[densely dashed] (n41151144) -- (n14151);
 \draw[-] (n511511) to (n14151);
 \draw[-](n3) to (n14151);
 \draw[densely dashed] (n14151) -- (n8);
 \draw[-] (n7) to (n8);
  \draw[-] (n8) to (n1);
   \draw[-] (n1) to (n2);
    \draw[-] (n2) to (n3);

 \node[draw=none](n9) at (0.5,0.5) {$S_{123}$};
 \node[draw=none](n10) at (1,1) {$x_{13}$};
 \node[draw=none](n1489) at (-1,1) {$x_{82}$};
  
 \node[draw=none](n1456) at (-0.5,0.5) {$S_{812}$};
 \node[draw=none](n14131) at (-1.5,0.5) {$S_{812}$};
\draw[-](n144) to (n14131);
 \draw[-] (n144) -- (n141);
 \draw[-] (n141) -- (n7);
 \draw[-] (n1489) -- (n14131);
 \draw[densely dashed] (n141) -- (n1456);
 \draw[densely dashed] (n14131) -- (n8);

 \draw[-] (n1489) -- (n1456);
 \draw[-] (n1456)--(n1);
 \node[draw=none](n11) at (1.5,0.5) {$S_{123}$};
 \node[red,very thick](n12) at (2,0) {$x_{23}$};
 \node[draw=none](n13) at (1.5,-0.5){$S_{223}$};
 \node[draw=none](n415151) at (2.5,-1.5){$S_{323}$};
 \draw[-](n2491) to (n415151);
 \draw[-](n33) to (n415151);
 \draw[densely dashed] (n13) -- (n415151);
 \node[draw=none](n14) at (1,-1){$x_{22}$};
  \node[draw=none](n565875) at (0,2) {$x_{83}$};
  \node[draw=none](n9990) at (-0.5,1.5) {$S_{823}$};
  
   \draw[densely dashed] (n9) -- (n9990);
   
  \draw[-] (n1489)--(n9990);
  \draw[-] (n9990)--(n565875);
 \draw[densely dashed] (n9) -- (n13);
 \node[draw=none](n1908789) at (-1.5,1.5) {$S_{782}$};
 \node[draw=none](n19087) at (-2,2) {$x_{72}$};
 
 \draw[-] (n1908789) to (n19087);
 \draw[-](n1908789)--(n1489);
  \node[draw=none](n1908733) at (0.5,1.5) {$S_{813}$};
  \draw[-] (n565875) -- (n1908733);
  \node[draw=none](n67689) at (-1,3) {$x_{73}$};
  \node[draw=none](n161891) at (-3,3) {$\cdots$};
  \node[draw=none](n6768911) at (0,4) {$x_{74}$};
  \node[draw=none](n519196) at (0.5,4.5){$S_{745}$};
  \draw[-](n6768911) to (n519196);
  \node[draw=none](n1951591) at (1,5){$x_{75}$};
  \draw[-](n519196) to (n1951591);
  \node[draw=none](n2171) at (1.5,4.5){$S_{785}$};
  \draw[-](n131) to (n2171);
  \node[draw=none](n1387445) at (10,4) {$x_{41}$};
  \draw[-](n1951591) to (n2171);
  \node[draw=none](n1387445000) at (9.5,3.5) {$S_{481}$};
  \node[draw=none](n68484123) at (11,3) {$x_{51}$};
  \node[draw=none](n11969) at (13,3){$\cdots$};
  \node[draw=none](n68484) at (10.5,2.5) {$S_{581}$};
 \node[draw=none] (n15116) at (12,2) {$x_{61}$};
 \node[draw=none] (n19196199) at (12.5,1.5) {$S_{671}$};
 
 \draw[-] (n15116)--(n19196199);
 \node[draw=none] (n2682827) at (12.5,0.5) {$S_{781}$};
 \node[draw=none](n1519691681) at (13,1) {$x_{71}$};
 \draw[-] (n1519691681)--(n19196199);
 \draw[-] (n2682827)--(n1519691681);

 \node[draw=none](n5161616) at (11.5,2.5){$S_{561}$};
 \draw[densely dashed] (n108) -- (n5161616);
 \node[draw=none](n52622727) at (11.5,1.5){$S_{681}$};
 \draw[densely dashed] (n52622727)--(n2682827);
 \draw[densely dashed] (n68484)--(n52622727);
 \draw[-] (n68484123)--(n5161616);
 \draw[-](n5161616)--(n15116);
 \draw[-](n15116)--(n52622727);
  \node[draw=none](n6848444) at (10.5,3.5) {$S_{451}$};
  \draw[densely dashed] (n6848444)--(n62062602);
  \draw[-] (n6848444)--(n68484123);
  \draw[-](n1387445)--(n6848444);
   \node[draw=none](n676891123) at (-0.5,3.5) {$S_{734}$}; 
   \draw[-] (n68484123)--(n68484);
   
   \draw[-] (n6848444)--(n68484123);
   
   \draw[-] (n6768911)--(n676891123);
    \draw[-] (n67689)--(n676891123);
    \node[draw=none](n676891199) at (0.5,3.5) {$S_{784}$};
    \draw[densely dashed] (n676891199) -- (n156196119);
     \draw[-] (n6768911)--(n676891199);
  \node[draw=none](n19087337) at (1,3) {$x_{84}$};
  \draw[-] (n19087337)--(n676891199);
  \node[draw=none](n1908739) at (-0.5,2.5) {$S_{783}$};
  \draw[densely dashed] (n676891199)--(n1908739);
  \draw[densely dashed] (n1908789)--(n1908739);
  \draw[-] (n67689)--(n1908739);
  \draw[-] (n1908739)--(n565875);
  \node[draw=none](n190865) at (-1.5,2.5) {$S_{723}$};
   \draw[-] (n19087) -- (n190865);
   \draw[-] (n67689) -- (n190865);
  \node[draw=none](n190873376) at (0.5,2.5) {$S_{834}$};
  \draw[densely dashed] (n68484)--(n1387445000);
  \draw[-] (n1387445000)--(n1387445);
 
  \draw[densely dashed] (n676891123)--(n190873376);
   \draw[-] (n19087337) to (n190873376);
   \node[draw=none](n1908733996) at (1.5,2.5) {$S_{814}$};
    \draw[-] (n565875) -- (n190873376);
    \draw[densely dashed] (n1908733) -- (n1908733996);
  \draw[-] (n1) to (n9);
   \draw[-] (n9) to (n10);
   \draw[-] (n19087337)--(n1908733996);
   \draw[-] (n1908733)--(n10);
    \draw[-] (n1) to (n9);
    \draw[-] (n10) to (n11);
     \draw[-] (n11) to (n12);
      \draw[-] (n12) to (n13);
       \draw[-] (n13) to (n14);
      \draw[densely dashed] (n9990)--(n190865); 
 
\draw[densely dashed] (n2) -- (n11);
\draw[densely dashed] (n9) -- (n13);

\node[draw=none](n15) at (1.5,1.5) {$S_{134}$};
 \draw[densely dashed] (n190873376) -- (n15);
 \node[draw=none](n16) at (2,2) {$x_{14}$};
 \draw[-] (n16) to (n1908733996);
 \node[draw=none](n17) at (2.5,1.5) {$S_{124}$};
 \node[draw=none](n18) at (3,1) {$x_{24}$};
 \node[draw=none](n19) at (2.5,0.5){$S_{234}$};
 
  \draw[-] (n10) to (n15);
   \draw[-] (n15) to (n16);
    \draw[-] (n16) to (n17);
     \draw[-] (n17) to (n18);
      \draw[-] (n18) to (n19);
\node[draw=none](n131) at (2,4) {$x_{85}$};    
\node[draw=none](n105161) at (2.5,4.5){$S_{856}$};
\draw[-](n131) to (n105161);
\node[draw=none](n195196) at (3,5){$x_{86}$};
\draw[-](n105161) to (n195196);
\node[draw=none](n195116) at (3.5,4.5){$S_{816}$};
\draw[-](n195196) to (n195116);
\node[draw=none](n131890) at (1.5,3.5) {$S_{845}$};
\draw[densely dashed] (n131890) -- (n51519691);
\draw[-] (n131) to (n131890);
\draw[-] (n19087337) to (n131890);
\node[draw=none](n138761) at (2.5,3.5) {$S_{815}$};
\draw[-] (n131)--(n138761);
\draw[densely dashed] (n1908733996)--(n138761);
\draw[-] (n12) to (n19);
      \draw[densely dashed] (n11) -- (n17);
\draw[densely dashed] (n15) -- (n19);
\draw[densely dashed] (n1456) -- (n1908733);

 \node[draw=none](n20) at (2.5,2.5) {$S_{145}$};
 \draw[densely dashed] (n131890)--(n20);
 \node[draw=none](n21) at (3,3) {$x_{15}$};
 \draw[-] (n138761)--(n21);
 \node[draw=none](n22) at (3.5,2.5) {$S_{125}$};
 \node[red,very thick](n23) at (4,2) {$x_{25}$};
 \node[draw=none](n24) at (3.5,1.5){$S_{245}$};
 \draw[densely dashed] (n20) -- (n24);

 \draw[densely dashed] (n17) -- (n22);
  \draw[-] (n16) to (n20);
   \draw[-] (n20) to (n21);
    \draw[-] (n21) to (n22);
    \draw[-] (n22) to (n23); 
     \draw[-] (n23) to (n24);
      \draw[-] (n24) to (n18);

 \node[draw=none](n25) at (3.5,3.5) {$S_{156}$};
 \node[draw=none](n26) at (4,4) {$x_{16}$};
 \draw[-](n195116) to (n26);
 \node[draw=none](n115191) at (4.5,4.5){$S_{167}$};
 \draw[-](n26) to (n115191);
 \node[draw=none](n51061010) at (5,5){$x_{17}$};
 \draw[-](n115191) to (n51061010);
 \node[draw=none] (n15961016) at (5,6){$\cdots$};
 \node[draw=none](n195196) at (3,6){$\cdots$};
 \node[draw=none](n19915916) at (5.5,4.5){$S_{127}$};
 \draw[-](n51) to (n19915916);
 \draw[-](n51061010) to (n19915916);
 \node[draw=none](n27) at (4.5,3.5) {$S_{126}$};
 \draw[densely dashed] (n27) -- (n12619792);
  \draw[densely dashed] (n25) -- (n15161981);
  \draw[densely dashed] (n151661) -- (n138761);
 \node[draw=none](n28) at (5,3) {$x_{26}$};
 \node[draw=none](n29) at (4.5,2.5){$S_{256}$};
 
  \draw[-] (n21) to (n25);
   \draw[-] (n25) to (n26);
    \draw[-] (n26) to (n27);
     \draw[-] (n27) to (n28);
      \draw[-] (n28) to (n29);
       \draw[-] (n29) to (n23);

\draw[densely dashed] (n27) -- (n22);
\draw[densely dashed] (n25) -- (n29);

 \node[draw=none](n30) at (3.5,0.5) {$S_{234}$};
 \node[draw=none](n31) at (4,0) {$x_{34}$};
 \node[draw=none](n32) at (3.5,-0.5) {$S_{334}$};
 \node[draw=none](n411616) at (4.5,-1.5){$S_{434}$};
 \draw[-](n41) to (n411616);
 \draw[densely dashed] (n32) -- (n411616);
\node[draw=none](n33) at (3,-1) {$x_{33}$};

  \node[draw=none](n34) at (2.5,-0.5) {$S_{233}$};
  \node[draw=none] (n61611) at (1.5,-1.5){$S_{232}$};
 \draw[densely dashed] (n1131444) -- (n61611);
  \draw[-](n3) to (n61611);
   \draw[densely dashed] (n34) -- (n61611);
\draw[-](n2491) to (n61611);
  \draw[-] (n12) to (n34);
    \draw[-] (n34) to (n33);
     \draw[-] (n18) to (n19);
      \draw[-] (n18) to (n30);
       \draw[-] (n30) to (n31);
        \draw[-] (n31) to (n32);
         \draw[-] (n32) to (n33);
         \draw[densely dashed] (n34) -- (n30);
\draw[densely dashed] (n19) -- (n32);

 \node[draw=none](n37) at (4.5,1.5) {$S_{235}$};
 \draw[densely dashed] (n34) -- (n37);

 \node[draw=none](n38) at (5.0,1) {$x_{35}$};
 \node[draw=none](n39) at (4.5,0.5) {$S_{345}$};
  \draw[-] (n23) to (n37);
   \draw[-] (n37) to (n38);
    \draw[-] (n38) to (n39);
   \draw[-] (n31) to (n39); 
\draw[densely dashed] (n24) -- (n39);
  \node[draw=none](n40) at (4.5,-0.5) {$S_{344}$};
  \node[draw=none](n56106191) at (3.5,-1.5){$S_{343}$};
  \draw[densely dashed](n148181)--(n56106191);
  \draw[-](n33) to (n56106191);
  \draw[densely dashed] (n40) -- (n56106191);
  \node[draw=none](n41) at (5,-1) {$x_{44}$};
  \node[draw=none](n14451919) at (6,-2){$x_{54}$};
  \draw[-](n4101951) to (n14451919);
  \draw[-](n05191061) to (n14451919);
  \node[draw=none](n14947218199) at (6.5,-2.5){$S_{564}$};
  \draw[-](n14451919) to (n14947218199);
  \draw[densely dashed](n5619196591) --(n14947218199) ;
  \node[red,very thick](n14919) at (4,-2){$x_{43}$};
  \node[draw=none](n11384919) at (4.5,-2.5){$S_{453}$};
  \draw[-](n14919) to (n11384919);
  \node[draw=none](n14472829919) at (5,-3){$x_{53}$};
  \draw[-](n11384919) to (n14472829919);
  \node[draw=none](n1149472819) at (5.5,-3.5){$S_{563}$};
  \draw[-](n14472829919) to (n1149472819);
  \draw[densely dashed](n14947218199) --(n1149472819) ;
  \node[draw=none](n11414919) at (5.5,-2.5){$S_{534}$};
    \draw[-](n11414919) to (n14472829919);
  \draw[-](n14451919) to (n11414919);
  \draw[densely dashed] (n411616)--(n11414919);
  \draw[-](n411616) to (n14919);
  \draw[-](n56106191) to (n14919);
 \draw[-] (n41) to (n40);
\node[draw=none](n141591) at (3,-3) {$x_{42}$};
\node[draw=none](n14947281129) at (3.5,-3.5){$S_{452}$};
\draw[-](n141591) to (n14947281129);
\draw[densely dashed](n11384919)--(n14947281129);
\node[draw=none](n14947342819) at (4,-4){$x_{52}$};
\draw[-](n14947281129) to (n14947342819) ;
\node[draw=none](n14947299) at (4.5,-4.5){$S_{562}$};
\draw[densely dashed](n1149472819) -- (n14947299);
\draw[-] (n14947342819) to (n14947299);
\node[draw=none](n098472819) at (4.5,-3.5){$S_{523}$};
\draw[-](n14947342819) to (n098472819) ;
draw[-](n098472819) to (n14947342819) ;
\draw[-](n14472829919) to (n098472819);
\draw[-](n148181) to (n141591);
\node[draw=none](n94191) at (3.5,-2.5){$S_{423}$};
\draw[densely dashed](n94191) -- (n098472819);
\draw[densely dashed](n415151)--(n94191);
\draw[-] (n141591) to (n94191);
\draw[-] (n14919) to (n94191);
\node[red,very thick](n251951591) at (2,-4){$x_{41}$};

\draw[-](n14414141) to (n251951591);
\node[draw=none](n1494728359) at (2.5,-4.5){$S_{451}$};
\node[draw=none](n19816169) at (-0.5,-4.5) {$\cdots$};
\draw[-](n251951591) to (n1494728359) ;
\draw[densely dashed] (n14947281129) -- (n1494728359);
\node[draw=none](n311149472819) at (3,-5){$x_{51}$};
\draw[-](n1494728359) to (n311149472819) ;
\node[draw=none](n779472819) at (3.5,-5.5){$S_{561}$};
\draw[-](n311149472819) to (n779472819) ;
\draw[densely dashed](n14947299) -- (n779472819);
\node[draw=none](n149472819) at (3.5,-4.5){$S_{512}$};
\draw[-](n14947342819) to (n149472819) ;
\draw[-](n311149472819) to (n149472819);
\node[draw=none](n19951951) at (2.5,-3.5){$S_{412}$};
\draw[-](n141591) to (n19951951);
\draw[-](n19951951) to (n251951591);
\draw[densely dashed] (n149472819) -- (n19951951);
\draw[densely dashed](n41151144)--(n19951951);
  \node[draw=none](n44) at (5.5,2.5) {$S_{236}$};
  \node[draw=none](n45) at (6,2) {$x_{36}$};
 \node[draw=none](n46) at (5.5,1.5) {$S_{356}$};
  \draw[-] (n28) to (n44);
   \draw[-] (n44) to (n45);
    \draw[-] (n45) to (n46);
     \draw[-] (n46) to (n38);
\draw[densely dashed] (n44) -- (n37);
\draw[densely dashed] (n29) -- (n46);

\node[draw=none](n47) at (5.5,0.5) {$S_{345}$};
\node[red,very thick](n48) at (6,0) {$x_{45}$};
\node[draw=none](n49) at (5.5,-0.5) {$S_{445}$};
\node[draw=none](n4101951) at (6.5,-1.5){$S_{545}$};
\draw[densely dashed] (n49) -- (n4101951);
\node[draw=none](n05191061) at (5.5,-1.5){$S_{454}$};
\draw[-](n41) to (n05191061);
\draw[densely dashed](n11384919)--(n05191061);
 \draw[-] (n38) to (n47);
 \draw[-] (n47) to (n48);
  \draw[-] (n48) to (n49);
   \draw[-] (n49) to (n41);
   \draw[-] (n31) to (n40);
\draw[densely dashed] (n40) -- (n47);
\draw[densely dashed] (n39) -- (n49);

\node[draw=none](n50) at (5.5,3.5) {$S_{267}$};
\draw[densely dashed] (n50) -- (n1261979290);
\node[red,very thick](n51) at (6,4) {$x_{27}$};
\node[draw=none](n511718) at (7,5) {$x_{28}$};
\node[draw=none](n511166) at (7.5,4.5) {$S_{238}$};
\draw[-](n511718) to (n511166);
\node[draw=none](n52) at (6.5,3.5) {$S_{237}$};
\node[draw=none](n51616109) at (6.5,4.5){$S_{278}$};
\draw[-](n511718) to (n51616109);
\draw[-](n51) to (n51616109);
\node[draw=none](n552929062) at (8.5,4.5){$S_{381}$};
\draw[densely dashed] (n1387445000)--(n552929062);
\draw[densely dashed] (n52)-- (n0596020620);
\node[draw=none](n53) at (7,3) {$x_{37}$};
\node[draw=none](n100) at (8,4) {$x_{38}$};
\draw[-] (n511166) to (n100);
\draw[-](n552929062) to (n100);
\node[draw=none](n587787671) at (9,5) {$x_{31}$};
\draw[-](n552929062) to (n587787671);
\node[draw=none](n5969619) at (9.5,4.5){$S_{341}$};
\draw[-] (n1387445) to (n5969619);
\draw[-](n587787671) to (n5969619);

\node[draw=none](n101) at (7.5,3.5) {$S_{378}$};
\draw[densely dashed] (n101) -- (n51616109);

\draw[-] (n53) to (n101);
\draw[-] (n100) to (n101);
\draw[densely dashed] (n44) -- (n52);

\node[draw=none](n102) at (8.5,3.5) {$S_{348}$};
\draw[densely dashed] (n102)--(n529296206);
\draw[-] (n100) to (n102);
\node[draw=none](n103) at (9,3) {$x_{48}$};
\draw[-] (n102) to (n103);

\node[draw=none](n104) at (8.5,2.5) {$S_{478}$};
\draw[-] (n103) to (n104);
\node[draw=none](n105) at (9.5,2.5) {$S_{458}$};
\node[draw=none](n106) at (10,2) {$x_{58}$};
\node[draw=none](n106) at (10,2) {$x_{53}$};
\node[draw=none](n108) at (10.5,1.5) {$S_{568}$};

\node[draw=none](n109) at (11,1) {$x_{68}$};
\draw[-](n52622727)--(n109);
\node[draw=none](n110) at (10.5,0.5) {$S_{678}$};

\node[red,very thick](n111) at (10,0) {$x_{67}$};
\node[draw=none](n112) at (9.5,0.5) {$S_{567}$};
\draw[densely dashed] (n101) -- (n104);
\draw[densely dashed] (n105)--(n6848444);
\node[draw=none](n107) at (9.5,1.5) {$S_{578}$};
\draw[-] (n105) to (n103);
\draw[-] (n105) to (n106);
\draw[-] (n106) to (n107);
\node[draw=none](n54) at (6.5,2.5) {$S_{367}$};
\draw[densely dashed] (n104) -- (n107);
 \draw[densely dashed] (n50) -- (n54);
\draw[-] (n106) to (n108);
\draw[densely dashed] (n107) -- (n110);
\draw[-] (n108) to (n109);
\draw[-] (n110) to (n109);
\draw[-] (n110) to (n111);
\draw[-] (n111) to (n112);

 \node[draw=none](n55) at (6.5,1.5) {$S_{346}$};
 \draw[densely dashed] (n47) -- (n55);
 \node[draw=none](n56) at (7,1) {$x_{46}$};
  \node[draw=none](n57) at (6.5,0.5) {$S_{456}$};
  \draw[-] (n28) to (n50);
  \draw[-] (n50) to (n51);
  \draw[-] (n51) to (n52);
  \draw[-] (n52) to (n53);
  \draw[-] (n53) to (n54);
  \draw[-] (n54) to (n45);
  \draw[-] (n45) to (n55);
  \draw[-] (n55) to (n56);
   \draw[-] (n56) to (n57);
 \draw[densely dashed] (n46) -- (n57);  

\node[draw=none](n58) at (7.5,2.5) {$S_{347}$};
 \node[red,very thick](n59) at (8,2) {$x_{47}$};
 \draw[-] (n59) to (n104);
  \node[draw=none](n60) at (7.5,1.5) {$S_{467}$};
  \draw[densely dashed] (n54) -- (n60);
   
\draw[densely dashed] (n58) -- (n55);
\draw[densely dashed] (n58) -- (n102);
  
\node[draw=none](n61) at (7.5,0.5) {$S_{456}$};
 \node[draw=none](n62) at (8,0) {$x_{56}$};
  \node[draw=none](n63) at (7.5,-0.5) {$S_{556}$};
  \node[draw=none](n50626921) at (8.5,-1.5){$S_{656}$};
  \draw[-](n70) to (n50626921);
  \node[draw=none](n5619196591) at (7.5,-1.5){$S_{565}$};
  \draw[densely dashed] (n63) -- (n50626921);
\node[draw=none](n64) at (7,-1) {$x_{55}$};
\draw[-](n5619196591) to (n64);
\draw[-](n4101951) to (n64);
 \node[draw=none](n65) at (6.5,-0.5) {$S_{455}$};
 \draw[densely dashed] (n65) -- (n05191061);
  \draw[-] (n62) to (n61);
   \draw[-] (n56) to (n61);
   \draw[-] (n48) to (n65);
   \draw[-] (n64) to (n65);
  \draw[-] (n48) to (n57); 
  \draw[-] (n63) to (n64);
  \draw[-] (n62) to (n63);
\draw[densely dashed] (n61) -- (n65);
\draw[densely dashed] (n57) -- (n63);

 \draw[-] (n53) to (n58);
 \draw[-] (n58) to (n59);
 \draw[-] (n59) to (n60);
 \draw[-] (n60) to (n56);

\node[draw=none](n69) at (8.5,-0.5) {$S_{566}$};
\draw[densely dashed] (n5619196591)--(n69);
\node[draw=none](n70) at (9,-1) {$x_{66}$};
\node[red,very thick](n419910) at (8,-2){$x_{65}$};
\draw[-](n5619196591) to (n419910);
\draw[-](n50626921) to (n419910);
\node[draw=none](n149478472819) at (8.5,-2.5){$S_{675}$};
\draw[-](n419910) to (n149478472819) ;
\node[draw=none](n529529) at (7,-3) {$x_{64}$};
\draw[-](n14947218199) to (n529529);
 \node[draw=none](n1451614) at (7.5,-3.5) {$S_{674}$};
 \draw[-](n529529) to (n1451614);
 \draw[densely dashed](n149478472819) -- (n1451614);
\node[draw=none](n14947281932) at (7.5,-2.5){$S_{645}$};
\draw[-](n419910) to (n14947281932) ;
\draw[-](n529529) to (n14947281932) ;
\draw[densely dashed](n4101951) -- (n14947281932); 
\node[red,very thick](n52952249) at (6,-4) {$x_{63}$};
\draw[-](n1149472819) to (n52952249);
 \node[draw=none](n515144) at (6.5,-4.5) {$S_{673}$};
\draw[-] (n52952249) to (n515144);
 \draw[densely dashed](n1451614) -- (n515144);
\node[draw=none](n149131472819) at (6.5,-3.5){$S_{634}$};
\draw[-](n52952249) to (n149131472819) ;
\draw[-](n529529) to (n149131472819);
\draw[densely dashed](n11414919) --(n149131472819) ;
\node[draw=none](n529521319) at (5,-5) {$x_{62}$};
\draw[-](n14947299) to (n529521319);
\node[draw=none](n56117) at (5.5,-5.5){$S_{672}$};
\draw[-](n529521319) to (n56117);
\draw[densely dashed](n515144) -- (n56117);
\node[draw=none](n149472810989) at (5.5,-4.5){$S_{623}$};
\draw[-](n529521319) to (n149472810989) ;
\draw[-](n52952249) to (n149472810989) ;
\draw[densely dashed](n098472819) -- (n149472810989) ;
\node[red,very thick](n529524229) at (4,-6) {$x_{61}$};
\node[draw=none](n19519619) at (4.5,-6.5){$S_{671}$};
\draw[-](n529524229) to (n19519619);
\draw[densely dashed](n56117)--(n19519619);
\draw[-](n779472819) to (n529524229);
 \node[draw=none](n141314) at (4.5,-5.5) {$S_{612}$};
 \draw[-](n529524229) to (n141314);
 \draw[-](n529521319) to (n141314);
 \draw[densely dashed](n149472819) -- (n141314);
\node[draw=none](n73) at (8.5,1.5) {$S_{457}$};
\node[draw=none](n74) at (9,1) {$x_{57}$};

\node[draw=none](n75) at (8.5,0.5) {$S_{567}$};
\draw[densely dashed] (n61) -- (n73);
\draw[densely dashed] (n73) -- (n105);
 \draw[-] (n59) to (n73);
  \draw[-] (n73) to (n74);
   \draw[-] (n74) to (n112);
    \draw[-] (n75) to (n62);
    \draw[-] (n107) to (n74);
\draw[-] (n74) to (n75);
\draw[densely dashed] (n60) -- (n75);

\node[draw=none](n200) at (9.5,-0.5) {$S_{667}$};
\node[draw=none](n69196919619) at (10.5,-1.5){$S_{767}$};
\draw[-](n204) to (n69196919619) ;
\node[draw=none](n14929518919) at (10,-2){$x_{76}$};
\draw[-](n69196919619) to (n14929518919) ;
\node[draw=none](n18972819) at (10.5,-2.5){$S_{786}$};
\draw[densely dashed](n519515195119) -- (n18972819);
\draw[-](n14929518919) to (n18972819) ;
\node[draw=none](n149472819) at (9,-3){$x_{75}$};
\draw[-](n149478472819) to (n149472819) ;
 \node[draw=none](n141414) at (9.5,-3.5) {$S_{785}$};
 \draw[-](n149472819) to (n141414);
 \draw[densely dashed](n18972819) -- (n141414);
\node[draw=none](n149472819) at (9.5,-2.5){$S_{756}$};
\draw[-](n14929518919) to (n149472819);
\draw[densely dashed](n50626921) -- (n149472819) ;
\node[draw=none](n149472834119) at (8,-4){$x_{74}$};
\draw[-] (n1451614) to (n149472834119);
\node[draw=none](n4919410) at (8.5,-4.5){$S_{784}$};
\draw[-](n149472834119) to (n4919410);
\draw[densely dashed](n141414) -- (n4919410);
 \node[draw=none](n148794) at (8.5,-3.5) {$S_{745}$};
 \draw[-](n149472834119)  to (n148794);
 \draw[densely dashed](n14947281932) -- (n148794);
\node[draw=none](n14944172819) at (7,-5){$x_{73}$};
\draw[-](n515144) to (n14944172819) ;
\node[draw=none](n159519) at (7.5,-5.5){$S_{783}$};

\draw[-](n14944172819) to (n159519);
\draw[densely dashed](n4919410) -- (n159519);
 \node[draw=none](n1414114) at (7.5,-4.5) {$S_{734}$};
 \draw[-](n149472834119) to (n1414114);
 \draw[-](n14944172819) to (n1414114);
 \draw[densely dashed](n149131472819) -- (n1414114);
\node[draw=none](n149472819112) at (6,-6){$x_{72}$};
\node[draw=none](n95191916) at (5,-7){$x_{71}$};
\node[draw=none](n59191) at (3,-7){$\cdots$};
\draw[-](n19519619) to (n95191916);
\node[draw=none](n1991691) at (5.5,-7.5){$S_{781}$};
\draw[-](n95191916) to (n1991691);
\node[draw=none](n5106116010) at (5.5,-6.5){$S_{712}$};
\draw[-](n95191916) to (n5106116010) ;
\draw[-](n149472819112) to (n5106116010) ;
\draw[densely dashed](n141314) -- (n5106116010);
\node[draw=none](n510601061) at (6.5,-6.5){$S_{782}$};

\draw[-](n149472819112) to (n510601061);
\draw[densely dashed](n1991691) --(n510601061) ;
\draw[densely dashed](n159519) -- (n510601061) ;
\draw[-](n56117) to (n149472819112);
\node[draw=none](n159191) at (6.5,-5.5){$S_{723}$};
\draw[-](n14944172819) to (n159191);
\draw[-](n149472819112) to (n159191);
\draw[densely dashed](n149472810989) -- (n159191);
\draw[densely dashed] (n200)--(n69196919619);
\node[draw=none](n1595159961) at (9.5,-1.5){$S_{676}$};
\draw[-](n14929518919) to (n1595159961) ;
\draw[-](n70) to (n1595159961);
\draw[densely dashed](n149478472819) --(n1595159961) ;
\node[draw=none](n202) at (12,0) {$x_{78}$};
\node[draw=none](n201) at (11.5,0.5) {$S_{678}$};
\node[draw=none](n203) at (11.5,-0.5) {$S_{778}$};
\node[draw=none](n9519519) at (12.5,-1.5){$S_{878}$};
\draw[densely dashed] (n203)--(n9519519);
\node[draw=none](n519515195119) at (11.5,-1.5){$S_{787}$};
\node[draw=none](n204) at (11,-1) {$x_{77}$};
\draw[-](n519515195119) to (n204);
\node[draw=none](n205) at (10.5,-0.5) {$S_{677}$};
\draw[densely dashed] (n1595159961)--(n205);
\node[draw=none](n210) at (13,-1) {$x_{88}$};
\draw[-](n210) to (n9519519) ;
\node[red,very thick](n52952219) at (12,-2) {$x_{87}$};
\draw[-](n9519519) to (n52952219);
\draw[-](n519515195119) to (n52952219) ;
\node[draw=none](n529521129) at (11,-3) {$x_{86}$};
\draw[-](n18972819) to (n529521129) ;
\node[draw=none](n90472819) at (11.5,-2.5){$S_{867}$};
\draw[-](n529521129) to (n90472819);
\draw[-](n52952219) to (n90472819);
\draw[densely dashed](n69196919619) -- (n90472819);
\node[red,very thick](n123529529) at (10,-4) {$x_{85}$};
\draw[-](n141414) to (n123529529) ;
 \node[draw=none](n144561) at (10.5,-3.5) {$S_{856}$};
 \draw[-](n123529529) to (n144561);
 \draw[-](n529521129) to (n144561);
 \draw[densely dashed](n149472819) -- (n144561);
\node[draw=none](n52951229) at (9,-5) {$x_{84}$};
\draw[-](n4919410) to (n52951229);
\node[draw=none](n2519619) at (9.5,-4.5){$S_{845}$};
\node[draw=none](n18618) at (11.5,-4.5){$\cdots$};
\draw[-](n52951229) to (n2519619);
\draw[-](n123529529) to (n2519619);
\draw[densely dashed](n148794) -- (n2519619);
\node[red,very thick](n512029529) at (8,-6) {$x_{83}$};
\draw[-](n512029529) to (n159519);
\node[draw=none](n51095196) at (8.5,-5.5) {$S_{834}$};
\draw[-](n512029529) to (n51095196);
\draw[-](n52951229) to (n51095196);
\draw[densely dashed](n1414114) -- (n51095196) ;
\node[draw=none](n5295291122) at (7,-7) {$x_{82}$};
\draw[-](n5295291122) to (n510601061);
\node[draw=none](n1951961) at (7.5,-6.5) {$S_{823}$};
\node[draw=none](n1969160) at (9.5,-6.5){$\cdots$};
\draw[-](n5295291122) to (n1951961);
\draw[-](n512029529) to (n1951961);
\draw[densely dashed](n159191)--(n1951961);
\node[red,very thick](n5292441529) at (6,-8) {$x_{81}$};
\draw[-](n1991691) to (n5292441529) ;
\node[draw=none](n95619691) at (6.5,-7.5){$S_{812}$};
\draw[-](n5292441529) to (n95619691) ;
\draw[-](n5295291122) to (n95619691);
\draw[densely dashed](n5106116010) -- (n95619691) ;
\node[draw=none](n211) at (12.5,-0.5) {$S_{788}$};
\draw[densely dashed] (n211)--(n519515195119);
\draw[densely dashed] (n75) -- (n200);
\draw[-] (n202) to (n211);
\draw[-] (n2682827)--(n202);
\draw[densely dashed] (n203) -- (n110);
\draw[-] (n111) to (n200);
\draw[-] (n200) to (n70);
\draw[-] (n69) to (n70);
\draw[-] (n62) to (n69);
\draw[-] (n109) to (n201);
\draw[-] (n201) to (n202);
\draw[-] (n202) to (n203);
\draw[-] (n203) to (n204);
\draw[-] (n204) to (n205);
\draw[-] (n205) to (n111);
\draw[-] (n210) to (n211);
 \draw[-] (n106)--(n68484);
\draw[densely dashed] (n69) -- (n112);
\draw[densely dashed] (n108) -- (n112);
\draw[densely dashed] (n201) -- (n205);
 \draw[-] (n1387445000)--(n103);
 \draw[densely dashed] (n19196199)--(n201);
\end{tikzpicture}
\end{center}
\caption{}
\label{fig12}
\end{figure}
\end{example}
\newpage
\section{Main theorem}
In this section we will state and prove a result that gives us an algebraic relation between entries of a polygonal Heronian frieze that arises from a cylic $n$-gon, where $n>4$.\\ Before stating the result, recall that if $i \leq 0$, $j \leq 0$, or $k \leq 0$ we have, by convention, that $S_{ijk} = 1$ and $x_{ij} = 1$. Additionally, we adopt a convention that $x_{i,j}^p = 1$ for $p < 0$ and $i,j \in \{1,2,...,n\}$.
\begin{theorem}
 Let $P = (A_1, A_2, ..., A_n)$ be a cyclic $n$-gon with vertices ordered anticlockwise, and $n>4$ is an even number. Let $F$ be the corresponding polygonal Heronian frieze, as in Definition \ref{def16}. Then the following holds for the entries of $F$: 

\begin{equation}
\sum_{m=1}^{n}(-1)^{m+1}x(m)S(m) = 0,
\end{equation}
where
\begin{equation*}
    x(m) = \begin{cases}
    x_{12}x_{34}^c \prod_{\substack{l=m \\ l \text{ even }}}^{n-2} x_{l,l+1}^{\frac{l-2}{2}} \prod_{\substack{l=5 \\ l \text{ odd}}}^{m-1} x_{l,l+1}^{\frac{l-3}{2}} , &\text{ for $m$ even;} \\ \\
    x_{12}^{a}\prod_{\substack{l=m-1 \\ l \text{ even}}}^{n-2}x_{l,l+1}^{\frac{l-2}{2}}\prod_{\substack{l=5 \\ l \text{ odd}}}^m x_{l,l+1}^{\frac{l-3}{2}}, &\text{for $m$ odd, $n=6$; } \\ \\
    x_{12}^{a}x_{34}^bx_{m-2,m-1}^{\frac{m-5}{2}}x_{m,m+1}^{\frac{m-3}{2}}  \\ \qquad \qquad \prod_{\substack{l=m-1 \\ l \text{ even}}}^{n-2}x_{l,l+1}^{\frac{l-2}{2}}\prod_{\substack{l=5 \\ l \text{ odd}}}^{m-4}x_{l,l+1}^{\frac{l-5}{2}}, &\text{for $m$ odd, $n>6$ ;}
      \end{cases}
\end{equation*}
\begin{equation*}
S(m) = \begin{cases}
\left(\prod_{k=1}^{m-1}\left(\prod_{\substack{l = k+1 \\ l \text{ even}}}^{m-2} S_{k,l,l+1} \prod_{\substack{l=m+1 \\ l \text{ odd}}}^{n-1}S_{k,l,l+1}\right)\right) \left(\prod_{k=m}^{n-3}\prod_{\substack{l=k+2 \\ l \text{ odd}}}^{n-1}S_{k+1,l,l+1}\right), &\text{for $m$ even;} \\ \\
  S_{m-2,m-1,m+1}\left(\prod_{\substack{l=m+2 \\ l \text{ odd}}}^{n-1}S_{m-1,l,l+1}S_{m-2,l,l+1}S_{m-3,l,l+1}\right)\left(\prod_{k=m}^{n-3}\prod_{\substack{l=k+2 \\ l \text{ odd}}}^{n-1}S_{k+1,l,l+1}\right) \\ \qquad \qquad  
\left[\prod_{k=1}^{m-4}\left((S_{k,k+1,k+2}S_{k,k+1,m-1}S_{k,k+1,m+1})^{(k \text{ mod 2})} \right. \right.  \\ \qquad \qquad  \left. \left. \prod_{\substack{l=k+2 \\ l \text{ even}}}^{m-2}S_{k,l,l+1} \prod_{\substack{l=m+2 \\ l \text{ odd}}}^{n-1}S_{k,l,l+1}\right)\right], &\text{ for $m$ odd},\end{cases}\end{equation*}
where
\\ \\
(i) $a = 1$ for $m \in \{1,3\}$, and $a = 0$ otherwise;\\
(ii) $b = 1$ for $m \in \{1,3,5\}$, and $b = 0$ otherwise; \\
(iii) $c = 0$ for $n = 6$, and $c = 1$ for $n \geq 8$.
\label{impthm}
\end{theorem}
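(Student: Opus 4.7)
The plan is to derive this identity from Theorem \ref{thm009} by the same mechanism used in the proof of Theorem \ref{thm26}. Set $\delta_{ij} := |A_iA_j|$ and let $R$ be the circumradius of $P$. Since $n$ is even, Theorem \ref{thm009} yields
\[
\sum_{m=1}^{n}(-1)^{m+1}\,c_m \;=\; 0, \qquad c_m \;=\; \prod_{s\neq m}\delta_{ms}^{-1}.
\]
I would multiply this equation by a single monomial $M$ in the $\delta_{ij}$'s and in $R^{-1}$, chosen so that $M\cdot c_m = x(m)\,S(m)$ for every $m\in\{1,\dots,n\}$. Since the polygon is oriented anticlockwise, each triple $S_{abc}$ appearing in the statement has $a<b<c$, so the identifications $S_{abc}=\delta_{ab}\delta_{bc}\delta_{ca}/R$ (from $T=\frac{abc}{4R}$, exactly as used in the proof of Theorem \ref{thm26}) and $x_{ij}=\delta_{ij}^2$ introduce no sign ambiguity. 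Once such an $M$ is exhibited, the identity follows at once.

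The problem therefore reduces to showing the existence of $M$, i.e.\ that the expression
\[
\mu(m) \;:=\; x(m)\,S(m)\,\prod_{s\neq m}\delta_{ms}
\]
is independent of $m$. Writing $\mu(m)=R^{-N(m)}\prod_{i<j}\delta_{ij}^{g_{ij}(m)}$, where $N(m)$ is the total number of $S$-factors in $S(m)$ and $g_{ij}(m)$ is the exponent of $\delta_{ij}$, it suffices to prove $N(m)$ and $g_{ij}(m)$ are constant in $m$. Each $S_{abc}$ contributes $1$ to $N(m)$ and $1$ to each of $g_{ab}(m)$, $g_{bc}(m)$, $g_{ac}(m)$; each $x_{kl}^e$ contributes $2e$ to $g_{kl}(m)$; and the prefactor $\prod_{s\neq m}\delta_{ms}$ contributes $1$ to $g_{ms}(m)$ for every $s\neq m$.

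The verification is then a finite case analysis on the three branches in the definitions of $x(m)$ and $S(m)$: $m$ even; $m$ odd with $n=6$; and $m$ odd with $n\geq 8$. For each branch I would tabulate the multiplicities of the $x_{l,l+1}$ in $x(m)$ and of the $S_{abc}$ in $S(m)$, add the boundary contribution from $\prod_{s\neq m}\delta_{ms}$, and check invariance. The cleanest route is an incremental comparison of $\mu(m)$ with $\mu(m+1)$: passing from $m$ to $m+1$ alters only a bounded set of edges incident to $A_{m-1},A_m,A_{m+1},A_{m+2}$, and the adjustments in the parity-indexed products over $l$ in $x(m)$ and $S(m)$ are designed to match these changes exactly. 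One base comparison (for example $\mu(1)=\mu(2)$, and separately $n=6$) then pins down $M$ explicitly.

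The main obstacle is precisely this bookkeeping: the exponents $\frac{l-2}{2}$, $\frac{l-3}{2}$, $\frac{l-5}{2}$, together with the various parity-indexed products, have been engineered so that the total exponents in $\mu(m)$ are $m$-independent, but verifying this requires disciplined case-by-case accounting with careful attention to the boundary corrections $a$, $b$, $c$ and to the $S_{m-2,m-1,m+1}$ term that appears only in the odd case. Working out the displayed $n=6$ example in the introduction provides a useful sanity check, both to exhibit $M$ concretely and to serve as the starting point of the incremental comparison in the general case.
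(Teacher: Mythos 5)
Your overall strategy is the same as the paper's: both start from Theorem \ref{thm009}, both use $T=\frac{abc}{4R}$ (together with the anticlockwise orientation, so that every $S_{abc}$ with $a<b<c$ equals $\delta_{ab}\delta_{bc}\delta_{ac}/R$) to convert chord-length monomials into frieze entries, and both clear denominators by multiplying the alternating identity by one fixed monomial. In the paper that monomial is assembled in two stages --- first the full product $\prod_{i<j}\delta_{ij}$, turning $c_m$ into $\delta(m)=\prod_{i,j\neq m}\delta_{ij}$, and then the least common multiple $L$ of the $X(m)$ from Propositions \ref{prop001} and \ref{lcm} --- whereas you propose to exhibit a single $M$ and \emph{verify} that $M c_m = x(m)S(m)$; these are the same computation read in opposite directions, and your reduction to showing that $\mu(m)=x(m)S(m)\prod_{s\neq m}\delta_{ms}$ is independent of $m$ is logically sound.

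The gap is that the proposal stops exactly where the proof begins. The content of this theorem is not the existence of some clearing monomial but the correctness of the specific exponents $\frac{l-2}{2}$, $\frac{l-3}{2}$, $\frac{l-5}{2}$ and the boundary constants $a,b,c$, and you defer that entirely to a tabulation you never perform; in the paper this verification occupies Lemmas \ref{lem32}--\ref{lem8} and Propositions \ref{prop38}--\ref{lcm}, i.e.\ essentially the whole of Section 3. Moreover, the one concrete device you offer to tame the bookkeeping is not right as stated: the ratio $\mu(m)/\mu(m+1)$ must absorb $\prod_{s\neq m,m+1}\delta_{m+1,s}/\delta_{m,s}$, which involves all $2(n-2)$ chords from $A_m$ and $A_{m+1}$ to the remaining vertices, not ``a bounded set of edges incident to $A_{m-1},A_m,A_{m+1},A_{m+2}$''; each incremental step therefore still requires manipulating the nested parity-restricted products over ranges of length $O(n)$ (the analogues of \eqref{eq111}, \eqref{eq1}, \eqref{eqa}), so the proposed shortcut does not reduce the work, and there is also a genuine structural discontinuity between the even-$m$ and odd-$m$ branches (the factor $S_{m-2,m-1,m+1}$ and the $k \bmod 2$ exponents) that an even-to-odd comparison must handle explicitly. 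Until that accounting is actually carried out --- for general even $n>4$, not just the $n=6$ sanity check --- the statement is unproved.
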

Before giving the proof of the theorem, we will prove a few lemmas and propositions.
\begin{lemma}
    Let $A, B_{l,l+1} \in \mathbb{R}$, where $l \in \{1,2,...,n-1\}$. If $a, b \in \mathbb{Z}$, where $a$ is even, $b$ is odd, and $a \leq b$, then the following hold:
    \begin{equation}
        \prod_{k=a}^{b}\prod_{\substack{l=k+1 \\ l \text{ even}}}^{b} \frac{A}{B_{l,l+1}} = \prod_{\substack{k=a \\k \text{ even}}}^{b}\prod_{\substack{l=k+1 \\ l \text{ even}}}^{b} \frac{A^2}{B_{l,l+1}^2},
        \label{eq111}
    \end{equation} 
    \begin{equation}
     \prod_{k=a}^{b}\prod_{\substack{l=k+2 \\ l \text{ odd }}}^{b+2}\frac{A}{B_{l,l+1}} = \prod_{\substack{k=a \\k \text{ even}}}^{b}\prod_{\substack{l=k+2 \\ l \text{ odd}}}^{b+2} \frac{A^2}{B_{l,l+1}^2}
     \label{eq112}
     \end{equation}
     \label{lem32}
\end{lemma}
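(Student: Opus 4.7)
Both identities are purely formal algebraic statements about products, and I intend to prove them by the same mechanism: pairing consecutive outer indices. Since $a$ is even and $b$ is odd, the range $\{a, a+1, \ldots, b\}$ partitions exactly into the consecutive pairs $(a, a+1), (a+2, a+3), \ldots, (b-1, b)$, and the pair leaders $\{a, a+2, \ldots, b-1\}$ are precisely the even values of $k$ indexing the outer product on the right-hand side. The key observation will be that within each such pair the two inner products are equal as lists of factors, so their combined contribution is the square of one of them, matching the inner factor on the right.

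For \eqref{eq111}, I would fix an even $k$ with $a \leq k \leq b-1$ and analyse the two inner products attached to the pair $(k, k+1)$. The inner product at $k$ runs over even $l$ with $k+1 \leq l \leq b$, and since $k+1$ is odd this equals the product over even $l$ with $k+2 \leq l \leq b$. The inner product at $k+1$ runs over even $l$ with $k+2 \leq l \leq b$. The two ranges coincide, so multiplying them gives $\prod_{\substack{l \text{ even} \\ k+2 \leq l \leq b}} (A/B_{l,l+1})^2$, which is exactly the inner factor at the even index $k$ on the right-hand side (rewriting $l \geq k+1$ with $l$ even as $l \geq k+2$ with $l$ even).

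For \eqref{eq112} the argument is identical after shifting the lower limit by one: for even $k$, the inner range $\{l \text{ odd}, k+2 \leq l \leq b+2\}$ is the same as $\{l \text{ odd}, k+3 \leq l \leq b+2\}$ because $k+2$ is even, and for the partner index $k+1$ (odd) the range is directly $\{l \text{ odd}, k+3 \leq l \leq b+2\}$. Again the two inner products are term-by-term identical, and pairing produces the required square.

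There is no genuine mathematical obstacle; the work is purely bookkeeping. The two points to verify carefully are that the pairing really exhausts $\{a, \ldots, b\}$ with no leftover index (which holds because $b-a$ is odd) and that empty inner products behave correctly (both sides equal $1$, for example when $k = b-1$ in \eqref{eq111}). Granted these, each identity reduces to the trivial observation $X \cdot X = X^2$ applied inside every pair, and the lemma follows.
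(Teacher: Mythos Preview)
Your proposal is correct and is essentially the same argument as the paper's: both pair the consecutive outer indices $(k,k+1)$ with $k$ even, observe that the two inner products in each pair coincide term by term (after the obvious parity shift of the lower limit), and conclude that the full product equals the product over even $k$ of the squared inner factor. Your treatment of the endpoint/empty-product case matches the paper's as well.
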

\begin{proof}
   Note that 
   \begin{equation*}
   \prod_{\substack{l=k+1 \\ l even}}^{b} \frac{A}{B_{l,l+1}} = \frac{A}{B_{a+2,a+3}} \frac{A}{B_{a+4,a+5}} \cdots \frac{A}{B_{b-1,b}},
   \end{equation*}
    \\for $k = a$ and $k = a+1$. \\ \\
   Similarly,
   \begin{equation*}
   \prod_{\substack{l=k+1 \\ l even}}^{b} \frac{A} {B_{l,l+1}} = \frac{A}{B_{a+4,a+5}}  \frac{A}{B_{a+6,a+7}} \cdots  \frac{A}{B_{b-1,b}},
   \end{equation*}
   \\ for $k = a+2$ and $k = a+3$. \\ \\
   The analogous conclusion can be made for any pair $k$ and $k+1$, where $a+4 \leq k \leq b-3 $ is an even number, i.e. \\ \\$\prod_{\substack{l=k+1 \\ l even}}^{b} \frac{A}{B_{l,l+1}}$ is the same for $k = a+4$   and $k = a+5$, $k = a+6$  and $k = a+7$, ..., $k = b-3$ and $k = b-2$. \\ \\ Since $\prod_{\substack{l=k+1 \\ l even}}^{b} \frac{A}{B_{l,l+1}}$ is an empty product for $k = b-1$ and $k = b$, and equals $1$, the equality \eqref{eq111} follows. The proof of the equality \eqref{eq112} is analogous.
\end{proof}

\begin{lemma} Let $B_{l,l+1} \in \mathbb{R}$,  where $l \in \{1,2,...,n-1\}$. If $a, b \in \mathbb{Z}$, where $a$ is even, $b$ is odd, and $a \leq b$, then
\begin{equation*}
    \prod_{\substack{k=a \\ k \text  { even}}}^{b}\prod_{\substack{l=k+2 \\ l \text{ even}}}^{b+2}\frac{1}{B_{l,l+1}} = \prod_{\substack{k=a+1 \\ k \text{ odd}}}^{b}\prod_{\substack{l=k+1 \\ l \text{ even}}}^{b+2}\frac{1}{B_{l,l+1}}.
\end{equation*}
\label{lem4}
\end{lemma}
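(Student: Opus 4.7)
The plan is to observe that both sides of the claimed equality are products of terms of the form $\frac{1}{B_{l,l+1}}$ with $l$ even, and to prove the equality by producing a bijection between the factors on the two sides that preserves the inner products. This mirrors the strategy used for Lemma \ref{lem32}: pair up summation indices differing by $1$ and verify that the corresponding inner products coincide termwise.

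First I would verify that the outer products on the two sides have the same number of factors. On the left, $k$ ranges over the even integers in $[a,b]$; since $a$ is even and $b$ is odd, these are $k = a, a+2, \ldots, b-1$, giving $\frac{b+1-a}{2}$ factors. On the right, $k$ ranges over the odd integers in $[a+1,b]$, namely $k = a+1, a+3, \ldots, b$, again giving $\frac{b+1-a}{2}$ factors. Thus the natural pairing is: for each $j \in \{0,1,\ldots,\frac{b-1-a}{2}\}$, match the left-hand outer factor indexed by $k_{\mathrm{L}} = a+2j$ with the right-hand outer factor indexed by $k_{\mathrm{R}} = a+2j+1$.

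Next I would check that the two inner products agree under this pairing. With $k_{\mathrm{L}} = a+2j$ even, the left-hand inner product ranges over even $l$ with $k_{\mathrm{L}}+2 \leq l \leq b+2$, i.e.\ $l \in \{a+2j+2,\, a+2j+4,\, \ldots,\, b+1\}$, since $b+2$ is odd so the largest even $l$ not exceeding $b+2$ is $b+1$. With $k_{\mathrm{R}} = a+2j+1$ odd, the right-hand inner product ranges over even $l$ with $k_{\mathrm{R}}+1 \leq l \leq b+2$, i.e.\ $l \in \{a+2j+2,\, a+2j+4,\, \ldots,\, b+1\}$. These index sets are identical, so the inner products agree factor-by-factor.

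Combining the pairings then yields the equality of the two outer products and proves the lemma. The only real obstacle is careful endpoint and parity bookkeeping at the boundaries $l = b+1$ and $k = a$ or $k = b$, which I would address simply by using that $a$ is even, $b$ is odd, and therefore $b+2$ is odd so that the largest even integer at most $b+2$ equals $b+1$ in all cases; no further case analysis is required.
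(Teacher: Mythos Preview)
Your proof is correct: the pairing $k_{\mathrm{L}}=a+2j \leftrightarrow k_{\mathrm{R}}=a+2j+1$ and the parity bookkeeping showing both inner products range over the same set $\{a+2j+2,\ldots,b+1\}$ is exactly what is needed. The paper's own proof is simply the one-line assertion ``It is an easy check that the left-hand side equals the right-hand side,'' so your argument is a fleshed-out version of precisely the direct verification the paper intends.
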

\begin{proof}
    It is an easy check that the left-hand side equals the right-hand side.
\end{proof}
\begin{lemma} Let $B_{l,l+1} \in \mathbb{R}$, where $l \in \{1,2,...,n-1\}$. If $a,b \in \mathbb{Z}$, where $a$ and $b$ are odd, and $a \leq b$, then 
\begin{equation*}
    \prod_{\substack{k=a \\ k \text{ odd}}}^{b}\prod_{\substack{l=k+3 \\ l \text{ odd}}}^{b+2}\frac{1}{B_{l,l+1}} =  \prod_{\substack{k=a \\ k \text{ even}}}^{b}\prod_{\substack{l=k+3 \\ l \text{ odd}}}^{b+2}\frac{1}{B_{l,l+1}}.
\end{equation*}
\label{lemtpr}
\end{lemma}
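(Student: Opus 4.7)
The plan is to prove the identity term by term by setting up a bijection between the outer indices of the two products. The key observation is that the inner product $\prod_{l=k+3,\,l\text{ odd}}^{b+2}\tfrac{1}{B_{l,l+1}}$ depends on $k$ only through the smallest odd index $l_0(k)\geq k+3$: when $k$ is odd, $k+3$ is even and so $l_0(k)=k+4$; when $k$ is even, $k+3$ is odd and so $l_0(k)=k+3$. In particular, $l_0(k)=l_0(k+1)$ whenever $k$ is odd, and since the upper bound $b+2$ is the same for every $k$, the inner product for an odd $k$ coincides with the inner product for $k+1$ (even).

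Using this, I would match the outer factors pairwise. Since $a$ and $b$ are odd, the outer index set on the LHS is $\{a,a+2,\dots,b\}$ and on the RHS is $\{a+1,a+3,\dots,b-1\}$. The shift $k\mapsto k+1$ restricts to a bijection $\{a,a+2,\dots,b-2\}\to\{a+1,a+3,\dots,b-1\}$, and by the observation above this bijection preserves the inner product. Hence every RHS factor is matched with an equal LHS factor.

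It remains to deal with the single unmatched LHS term, namely the one corresponding to $k=b$. Its inner product ranges over odd $l$ satisfying $b+3\leq l\leq b+2$, an empty range, so the factor equals $1$ and contributes nothing. Combining the pairwise equality with this trivial factor yields the claimed identity.

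The only thing requiring care is index bookkeeping: one should check that the bijection is well-defined (both sets have exactly $(b-a)/2$ elements), that the parity arguments for $l_0(k)$ are consistent at the boundary $k=a$ (which is odd), and that the lone unmatched term is indeed the one at $k=b$ rather than at the lower end. All of this is routine, which is why the cited statement style (as in Lemma~\ref{lem4}) might simply record it as an easy check.
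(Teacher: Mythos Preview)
Your argument is correct and is essentially the same as the paper's: the paper simply expands the left-hand side term by term (getting factors starting at $l=a+4,a+6,\dots$ for $k=a,a+2,\dots$) and observes this is exactly what the right-hand side expands to. Your bijection $k\mapsto k+1$ together with the observation $l_0(k)=l_0(k+1)=k+4$ for odd $k$, plus the empty factor at $k=b$, is just a slightly more formal packaging of that same expansion-and-comparison.
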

\begin{proof}
Note that 
\begin{equation*}
\begin{split}
     \prod_{\substack{k=a \\ k \text{ odd}}}^{b}\prod_{\substack{l=k+3 \\ l \text{ odd}}}^{b+2}\frac{1}{B_{l,l+1}} &= \frac{1}{B_{a+4,a+5}B_{a+6,a+7}\cdots B_{b+2,b+3}}  \frac{1}{B_{a+6,a+7}B_{a+8,a+9}\cdots B_{b+2,b+3}}\\ &
      \qquad \qquad \frac{1}{B_{a+8,a+9}B_{a+10,a+11}\cdots B_{b+2,b+3}} \cdots \frac{1}{B_{b+2,b+3}}  \\ &
     =  \prod_{\substack{k=a \\ k \text{ even}}}^{b}\prod_{\substack{l=k+3 \\ l \text{ odd}}}^{b+2}\frac{1}{B_{l,l+1}}.
     \end{split}
     \end{equation*}
\end{proof}
The following is easy to verify. \\
\begin{lemma} Let $B_{l,l+1} \in \mathbb{R}$, where $l \in \{1,2,...,n-1\}$. If $a, b \in \mathbb{Z}$, where $a$ and $b$ are even, and $a \leq b$, then
\begin{equation}
    \prod_{\substack{k=a \\ k \text{ even}}}^{b}\prod_{\substack{l=k+3 \\ l \text{ odd}}}^{b+3}B_{l,l+1} = \prod_{\substack{l=a+3 \\ l \text{ odd}}}^{b+3}B_{l,l+1}^{\frac{l-(a+1)}{2}}
  \label{eq1}  
,\end{equation}
and 
\begin{equation}
   \prod_{\substack{k=a \\ k \text{ even}}}^{b}\prod_{\substack{l=k+2 \\ l \text{ even}}}^{b+2}B_{l,l+1} = \prod_{\substack{l=a+2 \\ l \text{ even}}}^{b+2}B_{l,l+1}^{\frac{l-a}{2}}
   \label{eq2}.
\end{equation}
\label{lemspr}
\end{lemma}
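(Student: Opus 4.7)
The plan is to prove both identities by interchanging the order of multiplication and computing, for each fixed $l$ appearing on the right-hand side, how many times the factor $B_{l,l+1}$ occurs on the left-hand side. Since each double product on the left is finite and positive-indexed, we may freely reorder the factors, so the claim reduces to a purely combinatorial counting of admissible values of the outer index $k$.

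For the first identity, I would fix an odd $l$ with $a+3 \leq l \leq b+3$ and count the even integers $k$ with $a \leq k \leq b$ such that $k+3 \leq l \leq b+3$, i.e.\ $k \leq l-3$. Since $l$ is odd, $l-3$ is even, and the admissible values of $k$ are $a, a+2, \dots, l-3$, giving
\[
\frac{(l-3)-a}{2}+1 \;=\; \frac{l-(a+1)}{2}
\]
occurrences of $B_{l,l+1}$ on the left-hand side. This is precisely the exponent appearing on the right, proving \eqref{eq1}.

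For the second identity, I would proceed identically: fix an even $l$ with $a+2 \leq l \leq b+2$ and count the even integers $k$ with $a \leq k \leq b$ and $k+2 \leq l$, i.e.\ $k \leq l-2$. These are $a, a+2, \dots, l-2$, giving
\[
\frac{(l-2)-a}{2}+1 \;=\; \frac{l-a}{2}
\]
occurrences, which matches the exponent on the right of \eqref{eq2}.

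Finally I would note that the ranges match: on the left of \eqref{eq1} the smallest $l$ occurring is $l = a+3$ (attained at $k=a$) and the largest is $l = b+3$ (attained at $k=b$, and also at smaller $k$), so $l$ ranges over exactly the odd integers in $[a+3, b+3]$; analogously for \eqref{eq2}, $l$ ranges over the even integers in $[a+2, b+2]$. Since there is no obstacle here beyond bookkeeping, I do not expect any hard step; the only point requiring care is the parity check ensuring $l-3$ (resp.\ $l-2$) is even, so that the enumeration of admissible $k$ has the stated length.
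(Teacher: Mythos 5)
Your counting argument is correct: for each fixed $l$ on the right-hand side, the number of admissible even outer indices $k$ is exactly $\frac{l-(a+1)}{2}$ (resp.\ $\frac{l-a}{2}$), and the parity checks ensuring $l-3$ (resp.\ $l-2$) is even are exactly the points that need care. The paper leaves this lemma as ``easy to verify'' with no written proof, so your argument simply supplies the standard verification that the paper omits.
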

  \begin{lemma}
  Let $A\in \mathbb{R}$. If $a, b \in \mathbb{Z}$, where $a$ and $b$ are odd, and $a\leq b$, then
  \begin{equation}
      \prod_{k=a}^{b}\prod_{\substack{l=k+2 \\ l \text{ odd}}}^{b+2}A = A^{(\frac{b-a+2}{2})^2},
      \label{eqa}
  \end{equation}
  and
  \begin{equation}
    \prod_{k=a}^{b}\prod_{\substack{l=k+3 \\ l \text{ odd}}}^{b+2}A = A^{\frac{(b-a)(b-a+2)}{4}}. 
    \label{eqb}
  \end{equation}
  \label{lemgrp}
  \end{lemma}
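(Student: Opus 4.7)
Since $A$ does not depend on the summation indices, each double product on the left equals $A^N$, where $N$ is the cardinality of the index set $\{(k,l) : a \leq k \leq b,\ l \text{ odd},\ l \geq k+s,\ l \leq b+2\}$, with $s=2$ in the first identity and $s=3$ in the second. The plan is therefore to reduce both claims to elementary counting, splitting the outer range according to the parity of $k$ and applying the standard formula for an arithmetic progression.

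Concretely, write $m := (b-a)/2$. The odd values of $k$ in $[a,b]$ are $a, a+2, \ldots, b$ (there are $m+1$ of them), and the even values are $a+1, a+3, \ldots, b-1$ (there are $m$ of them). For each parity of $k$, the inner range consists of odd $l$ from a starting point that depends on whether $k+s$ is already odd (in which case the range starts at $k+s$) or even (in which case it starts at $k+s+1$), up to $b+2$; its cardinality is a linear function of $k$. After reparametrising via $k = a+2j$ in the odd case and $k = a+1+2j$ in the even case, the inner counts become linear in $j$, and the two resulting outer sums are telescoping arithmetic progressions of length $m+1$ and $m$ respectively.

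For the first identity I expect the odd-$k$ contribution to evaluate to $(m+1)(m+2)/2$ and the even-$k$ contribution to evaluate to $m(m+1)/2$, totalling $(m+1)^2 = ((b-a+2)/2)^2$. For the second identity both contributions should equal $m(m+1)/2$, totalling $m(m+1) = (b-a)(b-a+2)/4$. Matching these to the right-hand sides then closes both claims. I do not anticipate a real obstacle; the only points requiring care are correctly tracking, for each parity of $k$, whether the first admissible odd $l$ is $k+s$ or $k+s+1$, and handling the degenerate case $a=b$ (where in the second identity every inner product is empty and contributes trivially to the count). Both checks are immediate by direct substitution.
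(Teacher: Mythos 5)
Your proposal is correct: reducing both identities to counting the index set $\{(k,l)\}$, splitting the outer product by the parity of $k$, and summing the two resulting arithmetic progressions gives exactly the exponents you predict (including the empty inner products at $k=b$ in the second identity), and all intermediate values check out. The paper's proof is the same elementary counting argument organized slightly differently — it pairs each odd $k$ with the following even $k+1$, for which the inner counts coincide, and obtains \eqref{eqa} from \eqref{eqb} by factoring out one extra copy of $A$ for each odd $k$ — so there is no substantive difference in approach.
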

  \begin{proof}
  For equation \eqref{eqb}, note that $\prod_{\substack{l=k+3 \\ l \text{ odd}}}^{b+2}A = A^{\frac{b-a}{2}}$, for $k = a$ and $k = a+1$. Note as well that
  \begin{equation*}
     \prod_{\substack{l=k+3 \\ l \text{ odd}}}^{b+2}A =
     \begin{cases}
         \begin{alignedat}{2}
    & A^{\frac{b-a-2}{2}}, &\text{for $k = a+2$ and $k = a+3$;}
      \\
      & A^\frac{b-a-4}{2} , &\text{for $k = a+4$ and $k = a+5$;}
      \\
      &\text{ \quad \quad \quad \quad \quad... .... .... } 
      \\
     & A^2 , &\text{for $k = b-4$ and $k = b-3$;} \\
     &A , &\text{for $k = b-2$ and $k = b-1$.}
    \end{alignedat}
  \end{cases}
\end{equation*}
Then  
\begin{equation*}
\prod_{k=a}^{b}\prod_{\substack{l=k+3 \\ l \text{ odd}}}^{b+2}A = A^2 A^4 \cdots A^{b-a-4} A^{b-a-2} A^{b-a} = A^{\frac{(b-a)(b-a+2)}{4}}.
\end{equation*}
For equation \eqref{eqa}, we have that 
\begin{equation*}
  \prod_{k=a}^{b}\prod_{\substack{l=k+2 \\ l \text{ odd}}}^{b+2}A = A^{\frac{b-a+2}{2}} \prod_{k=a}^b\prod_{\substack{l=k+3 \\ l \text{ odd}}}^{b+2}A 
  = A^{\frac{b-a+2}{2}} A^{\frac{(b-a)(b-a+2)}{4}} 
= A^{(\frac{b-a+2}{2})^2}.
\end{equation*}
  \end{proof}
  \begin{lemma}
  Let $A \in \mathbb{R}$ and $a, b \in \mathbb{Z}$, where $a \leq b$. Then
  \begin{equation}
  \prod_{\substack{k=a \\ k\text{ odd}}}^{b}\prod_{\substack{l=k+1 \\ l \text{ even}}}^{b+2}A = A^ {\frac{(b-a+2)(b-a+4)}{8}},
  \label{eq10}
  \end{equation}
  if both $a$ and $b$ are odd, \\  \\ and 
  \begin{equation}
      \prod_{\substack{k=a \\ k \text{ even}}}^{b}\prod_{\substack{l=k+2 \\ l \text{ even}}}^{b+2}A = A^{\frac{(b-a+1)(b-a+3)}{8}},
      \label{eq11}
  \end{equation}
  if $a$ is even, and $b$ is odd.
  \label{lem8}
  \end{lemma}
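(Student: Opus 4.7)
Since the outer and inner products have the same base $A$, each equality reduces to counting the number of index pairs $(k,l)$ contributing a factor of $A$; call this total $N(a,b)$. The plan is therefore to evaluate $N(a,b)$ in each case by (i) counting the admissible values of $l$ for a fixed $k$, and (ii) summing the resulting arithmetic progression via the substitution $k = a + 2j$ and the triangular-number identity $\sum_{m=1}^{M} m = M(M+1)/2$.

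For \eqref{eq10}, both $a$ and $b$ are odd, so $k$ ranges over the odd integers $a, a+2, \ldots, b$. For a fixed such $k$, the inner product runs over even $l$ with $k+1 \le l \le b+2$; since $k+1$ is even and $b+2$ is odd, the largest admissible $l$ is $b+1$, giving $\frac{b-k}{2}+1$ even values. Writing $k = a+2j$ with $j = 0,1,\ldots,\frac{b-a}{2}$ and setting $N = \frac{b-a}{2}$, the total count is
\[
N(a,b) \;=\; \sum_{j=0}^{N} \Bigl(\tfrac{b-a-2j}{2}+1\Bigr) \;=\; \sum_{j=0}^{N}(N-j+1) \;=\; \frac{(N+1)(N+2)}{2},
\]
which I would then rewrite as $\frac{(b-a+2)(b-a+4)}{8}$ to match \eqref{eq10}.

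For \eqref{eq11}, $a$ is even and $b$ is odd, so the outer index $k$ ranges over even integers $a, a+2, \ldots, b-1$, giving $\frac{b-a+1}{2}$ values. For a fixed even $k$, the inner product runs over even $l$ with $k+2 \le l \le b+2$; since $b+2$ is odd, the largest admissible $l$ is $b+1$, giving $\frac{b-k+1}{2}$ even values. Writing $k = a + 2j$ with $j = 0,1,\ldots,M$ where $M = \frac{b-a-1}{2}$, the total is
\[
N(a,b) \;=\; \sum_{j=0}^{M} \frac{b-a-2j+1}{2} \;=\; \sum_{j=0}^{M}(M-j+1) \;=\; \frac{(M+1)(M+2)}{2},
\]
which rearranges to $\frac{(b-a+1)(b-a+3)}{8}$, as required.

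There is no substantive obstacle; the only care needed is bookkeeping of parities (to identify the greatest even $l \le b+2$) and the change of variable $k = a+2j$ that converts the sum of cardinalities into a triangular number. Both computations are straightforward and parallel, so I would present them as a single argument with the two case distinctions.
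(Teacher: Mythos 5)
Your proposal is correct and follows essentially the same route as the paper: both arguments truncate the inner product at the largest even index $b+1$, count the number of factors contributed by each value of $k$, and sum the resulting arithmetic progression to a triangular number. The only cosmetic difference is that you organize the count as an explicit sum over $j$ with $k=a+2j$, whereas the paper lists the inner products term by term.
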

  \begin{proof}
    First, note that, for $a$ odd, $b$ odd, we have that:
    \begin{equation*}
    \begin{split}
      \prod_{\substack{k=a \\ k \text{ odd}}}^{b}\prod_{\substack{l=k+1 \\ l \text{ even}}}^{b+2}A &=     \prod_{\substack{k=a \\ k \text{ odd}}}^{b}\prod_{\substack{l=k+1 \\ l \text{ even}}}^{b+1}A = \prod_{\substack{l=a+1 \\ l \text{ even}}}^{b+1}A\prod_{\substack{l=a+3 \\ l \text{ even}}}^{b+1}A  \cdots\prod_{\substack{l=b-1 \\ l \text{ even}}}^{b+1}A\prod_{\substack{l=b+1 \\ l \text{ even}}}^{b+1}A \\ &= A^{\frac{b-a+2}{2}} A^{\frac{b-a}{2}}  \cdots A^2 A = A^{\frac{(b-a+2)(b-a+4)}{8}},
      \end{split}
    \end{equation*}
    so equality \eqref{eq10} holds. \\  \\
    Let now $a$ be even, and $b$ odd. Then 
    \begin{equation*}
    \begin{split}
        \prod_{\substack{k=a \\ k \text { even}}}^{b}\prod_{\substack{l=k+2 \\ l \text{ even}}}^{b+2}A &= \prod_{\substack{k=a \\ k \text{ even}}}^{b-1}\prod_{\substack{l=k+2 \\ l \text{ even}}}^{b+1}A = \prod_{\substack{l=a+2 \\ l \text{ even}}}^{b+1}A \prod_{\substack{l=a+4 \\ l \text{ even}}}^{b+1}A \cdots \prod_{\substack{l=b-1 \\ l \text{ even}}}^{b+1}A \prod_{\substack{l=b+1 \\ l \text{ even}}}^{b+1}A\\
        &= A^{\frac{b-a+1}{2}} A^{\frac{b-a-1}{2}} \cdots A^2 A = A^{\frac{(b-a+1)(b-a+3)}{8}},
        \end{split}
    \end{equation*}
     so we have that equality \eqref{eq11} holds as well.
  \end{proof}

Now, note that, since $n$ is even and $P$ is cyclic in the statement of Theorem \eqref{impthm}, from Theorem \eqref{thm009} it follows that \begin{equation}
\sum_{m=1}^{n}(-1)^{m+1} \delta(m) = 0,
\label{eq31}
\end{equation}
where \begin{equation}\delta(m) = \prod_{\substack {i=1 \\ i \neq m}}^{n-1}(\prod_{\substack {j=i+1 \\ j \neq m }}^{n} \delta_{ij}),
\label{eq34}
\end{equation}
recalling that $\delta_{pq}= |A_pA_q|$, for $p, q \in \{1,2,..., n\}.$
 \begin{prop} For $\delta(m)$ as in \eqref{eq34}, the following holds:
 \label{prop9}
 \begin{multline}
\delta(m) = R^{\frac{n-2}{2}} \left(\prod_{k=1}^{m-1}\left(\prod_{\substack {l=k+1 \\ l \text{ even}}}^{m-1}S_{k,l,l+1} \prod_{\substack{l=m+1 \\ l \text{ odd }}}^{n-1}S_{k,l,l+1}\right)\right)\left(\prod_{k=m}^{n-3}\prod_{\substack {l=k+2 \\ l \text{ odd }}}^{n-1}S_{k+1,l,l+1}\right)\\  \left(\prod_{\substack {k=2 \\ k \text{ even}}}^{m-1}\prod_{\substack {l=k+1 \\ l \text{ even}}}^{m-1} \frac{R^2}{x_{l,l+1}}\right)\left(\prod_{\substack{k=m \\ k\text{ even}}}^{n-3}\prod_{\substack {l=k+2 \\ l odd }}^{n-1}\frac{R^2}{x_{l,l+1}}\right)\left(\prod_{\substack {l=m+1 \\ l\text{ odd}}}^{n-1} \frac{R^{m-2}}{x_{l,l+1}^{\frac{m-2}{2}}}\right),
\label{deltam}
 \end{multline}
 for even $m$, $2 \leq m \leq n$;
 \begin{equation}
   \delta(1) = \left(\prod_{k=1}^{n-3}\prod_{\substack {l=k+2 \\ l \text{ odd}}}^{n-1}S_{k+1,l,l+1}R\right) \left(\prod_{\substack{k=2 \\ k \text{ even}}}^{n-3}\prod_{\substack {l=k+2 \\ l \text{ odd}}}^{n-1}\frac{1}{x_{l,l+1}}\right) ;
   \label{dm}
 \end{equation}
 \begin{equation}
     \delta(3) = S_{124}R \left(\prod_{\substack{l=5 \\ l \text{ odd }}}^{n-1}S_{1,l,l+1}S_{2,l,l+1}R^2\right)\left(\prod_{k=3}^{n-3}\prod_{\substack{l=k+2 \\ l\text{ odd}}}^{n-1}S_{k+1,l,l+1}R\right)\left(\prod_{\substack{k=3 \\k \text{ odd}}}^{n-3}\prod_{\substack{l=k+2 \\ l \text{ odd}}}^{n-1}\frac{1}{x_{l,l+1}}\right);
     \label{deltamm}
 \end{equation}
 and
 \begin{equation}
 \begin{split}
    \delta(m) &= \frac{S_{1,2,m-1}S_{1,2,m+1}R^2}{x_{12}}\left(\prod_{\substack{l=2 \\ l \text{ even}}}^{m-2}S_{1,l,l+1}R\right)\left(\prod_{\substack{l=m+2 \\ l\text{ odd}}}^{n-1}S_{1,l,l+1}R\right) 
        \left(\prod_{\substack{l=m+2 \\ l \text{ odd}}}^{n-1} \frac{S_{m-1,l,l+1}S_{m-3,l,l+1}R^2}{x_{l,l+1}}\right)  \\ & \qquad \qquad  \left(\prod_{\substack{k=3 \\ k \text{ odd}}}^{m-4}\left(\frac{S_{k,k+1,m-1}S_{k,k+1,m+1}R^2}{x_{k,k+1}}\prod_{\substack{l=k+1 \\ l \text{ even}}}^{m-2}\frac{S_{k,l,l+1}R}{x_{l,l+1}}\prod_{\substack{l=m+2 \\ l \text{ odd}}}^{n-1}\frac{1}{x_{l,l+1}}\right)\right)\left(\prod_{\substack{k=2 \\ k \text{ even}}}^{m-5}\prod_{\substack{l=k+2 \\ l \text{ even}}}^{m-2} S_{k,l,l+1}R\right)  \\& \qquad \qquad \left(\prod_{k=2}^{m-4}\prod_{\substack{l=m+2 \\ l \text{ odd}}}^{n-1}S_{k,l,l+1}R\right) S_{m-2,m-1,m+1}R \left(\prod_{\substack{l=m+2 \\ l \text{ odd}}}^{n-1}\frac{S_{m-2,l,l+1}R}{x_{l,l+1}}\right)\left(\prod_{\substack{k=m \\ k \text{ odd}}}^{n-3} S_{k+1,k+2,k+3}R\right)  \\ & \qquad \qquad \left(\prod_{k=m}^{n-3}\prod_{\substack{l=k+3 \\ l \text{ odd}}}^{n-1}S_{k+1,l,l+1}R\right) \left(\prod_{\substack{k=m \\ k \text{ odd}}}^{n-3}\prod_{\substack{l=k+3 \\ l \text{ odd}}}^{n-1} \frac{1}{x_{l,l+1}}\right),
    \label{deltammm}
\end{split}   
\end{equation}
for odd $m$, $3 <m \leq n-1$, where $R$ is the radius of the circumscribed circle of $P$.
\label{prop38}
 \end{prop}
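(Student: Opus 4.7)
The proof rests on the identity $R\cdot S_{ijk} = \delta_{ij}\delta_{ik}\delta_{jk}$, valid for any anticlockwise-oriented triangle $A_iA_jA_k$ inscribed in a circle of radius $R$; it follows at once from $T = \frac{abc}{4R}$ together with $S_{ijk} = 4T$. Combined with $x_{ij} = \delta_{ij}^2$, this identity means that any product of $S$-factors and reciprocal $x$-factors, multiplied by an appropriate power of $R$, is itself a monomial in the chord lengths $\delta_{ij}$. The plan is therefore, in each of the four cases, to substitute these identities into the claimed right-hand sides of \eqref{deltam}, \eqref{dm}, \eqref{deltamm}, \eqref{deltammm} and verify that the resulting expression collapses to $\prod_{1 \le i < j \le n,\, i,j \neq m} \delta_{ij}$.

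The combinatorial content of each formula is a choice of collection of triangles of the form $A_kA_lA_{l+1}$ with $k,l,l+1 \in \{1,\dots,n\}\setminus\{m\}$. If the collection is chosen so that every \emph{diagonal} chord $\delta_{pq}$ (with $q \neq p+1$) appears in exactly one triangle, while each \emph{adjacent} chord $\delta_{l,l+1}$ appears in exactly $e_{l,l+1}$ triangles, then the product $\prod_{\text{triangles}} (R \cdot S_{k,l,l+1})$ equals $\bigl(\prod_{p<q}\delta_{pq}\bigr)\bigl(\prod_l \delta_{l,l+1}^{e_{l,l+1}-1}\bigr)$, with the factors of $R$ balancing against the explicit $R$-prefactor. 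Dividing by $\prod_l x_{l,l+1}^{(e_{l,l+1}-1)/2}$ then removes the excess and yields exactly $\delta(m)$. The various ranges of indices in \eqref{deltam}--\eqref{deltammm} simply enumerate the chosen triangles: for $m$ even, the triangulation is uniform and splits naturally into a piece ``to the left of'' $m$ and a piece ``to the right''; for odd $m \geq 5$, the gap at position $m$ forces several boundary triangles such as $S_{1,2,m-1}$, $S_{1,2,m+1}$ and $S_{m-2,m-1,m+1}$, which account for the extra factors in \eqref{deltammm}; the edge cases $m=1$ and $m=3$ admit the simpler forms \eqref{dm} and \eqref{deltamm} because the ``left'' region is empty or nearly so.

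Once the triangle collection is written down, the proof reduces to verifying multiplicities: for each pair $\{i,j\} \subseteq \{1,\ldots,n\}\setminus\{m\}$, the exponent of $\delta_{ij}$ on the right-hand side is $1$, and the total exponent of $R$ is $0$. This is mechanical, and the re-indexing Lemmas~\ref{lem32}--\ref{lem8} are precisely what is needed to put the resulting double products into the compact form stated. The main obstacle is the odd $m \ge 5$ case, where the triangulation scheme near the vertex $m$ is not a straightforward extension of the even case and must be chosen so that the non-adjacent chords $\delta_{k,m-1}$ and $\delta_{k,m+1}$ for various $k$ are each picked up by a unique triangle; once the correct scheme is identified, the calculation goes through as in the other cases.
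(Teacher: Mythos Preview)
Your approach is essentially the one the paper takes: the same circumradius identity $R\,S_{ijk}=\delta_{ij}\delta_{ik}\delta_{jk}$, the same idea of grouping the chords of $\{1,\dots,n\}\setminus\{m\}$ into triangles $A_kA_lA_{l+1}$, and the same use of Lemma~\ref{lem32} to collapse paired products over consecutive $k$ into products over even $k$. The only difference is orientation: the paper starts from the raw product $\delta(m)=\prod_{i<j,\,i,j\neq m}\delta_{ij}$, explicitly writes out a grouping into consecutive-pair factors $\delta_{kl}\delta_{k,l+1}$ (supplying or cancelling the needed $\delta_{l,l+1}$), and then substitutes $\delta_{kl}\delta_{k,l+1}=S_{k,l,l+1}R/\delta_{l,l+1}$ to arrive at the stated formulas, whereas you propose the reverse substitution. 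Note, however, that your proposal remains a sketch: you assert that the multiplicity check is ``mechanical'' but do not carry it out, and for the odd case $m\ge 5$ you do not actually specify the triangle collection or verify that each diagonal chord is hit exactly once. The paper does perform these bookkeeping steps in full (introducing auxiliary quantities $Q$, $Q_1$, $Q_2$ and simplifying them term by term), and since the whole content of the proposition \emph{is} this bookkeeping, a complete write-up would need to do the same.
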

 \begin{proof}
For even $m$, $2 \leq m \leq n$, equality \eqref{eq34} can be rewritten as follows: 
\begin{equation*}
\begin{split}
    \delta(m) &= \left(\prod_{\substack {l=2 \\ l \text{ even}}}^{m-2}\delta_{1l}\delta_{1,l+1}\delta_{l,l+1}\right)\left(\prod_{\substack {l=m+1 \\ l \text{ odd}}}^{n-1}\delta_{1l}\delta_{1,l+1}\delta_{l,l+1}\right)  \\ & \qquad \qquad \left(\prod_{k=2}^{m-1}\left(\prod_{\substack {l=k+1 \\ l \text{ even}}}^{m-1}\delta_{kl}\delta_{k,l+1}\prod_{\substack{l=m+1 \\ l \text{ odd }}}^{n-1}\delta_{kl}\delta_{k,l+1}\right)\right)
    \left(\prod_{k=m}^{n-3}\prod_{\substack {l=k+2 \\ l \text{ odd }}}^{n-1}\delta_{k+1,l}\delta_{k+1,l+1}\right)  \\ &=
    \left(\prod_{\substack {l=2 \\ l \text{ even}}}^{m-2}S_{1,l,l+1}R\right)\left(\prod_{\substack {l=m+1 \\ l \text{ odd}}}^{n-1}S_{1,l,l+1}R\right)  \\ & \qquad \qquad \left(\prod_{k=2}^{m-1}\left(\prod_{\substack {l=k+1 \\ l \text{ even}}}^{m-1}\frac{S_{k,l,l+1}R}{\delta_{l,l+1}} \prod_{\substack{l=m+1 \\ l \text{ odd }}}^{n-1}\frac{S_{k,l,l+1}R}{\delta_{l,l+1}}\right)\right) \left(\prod_{k=m}^{n-3}\prod_{\substack {l=k+2 \\ l \text{ odd }}}^{n-1}\frac{S_{k+1,l,l+1}R}{\delta_{l,l+1}}\right)  \\ & 
     = R^{\frac{n-2}{2}} \prod_{k=1}^{m-1}\left(\prod_{\substack {l=k+1 \\ l \text{ even}}}^{m-1}S_{k,l,l+1} \prod_{\substack{l=m+1 \\ l \text{ odd }}}^{n-1}S_{k,l,l+1}\right)\left(\prod_{k=m}^{n-3}\prod_{\substack {l=k+2 \\ l \text{ odd }}}^{n-1}S_{k+1,l,l+1}\right) \\ & \qquad \qquad \left(\prod_{k=2}^{m-1}\left(\prod_{\substack {l=k+1 \\ l \text{ even}}}^{m-1}\frac{R}{\delta_{l,l+1}}\prod_{\substack{l=m+1 \\ l\text{ odd }}}^{n-1}\frac{R}{\delta_{l,l+1}}\right)\right)\left(\prod_{k=m}^{n-3}\prod_{\substack {l=k+2 \\ l\text{ odd}}}^{n-1}\frac{R}{\delta_{l,l+1}}\right).
\end{split}
\end{equation*}
Let now $Q: = \left(\prod_{k=2}^{m-1}\left(\prod_{\substack {l=k+1 \\ l \text{ even}}}^{m-1}\frac{R}{\delta_{l,l+1}} \prod_{\substack{l=m+1 \\ l \text{ odd} }}^{n-1}\frac{R}{\delta_{l,l+1}}\right)\right)\left(\prod_{k=m}^{n-3}\prod_{\substack {l=k+2 \\ l \text{ odd }}}^{n-1}\frac{R}{\delta_{l,l+1}}\right).$
\\ Then 
\begin{equation*}
Q = \left(\prod_{k=2}^{m-1}\prod_{\substack {l=k+1 \\ l \text{ even}}}^{m-1} \frac{R}{\delta_{l,l+1}}\right) \left(\prod_{\substack {l=m+1 \\ l \text{ odd}}}^{n-1} \frac{R^{m-2}}{\delta_{l,l+1}^{m-2}}\right)\left(\prod_{k=m}^{n-3}\prod_{\substack {l=k+2 \\ l \text{ odd }}}^{n-1}\frac{R}{\delta_{l,l+1}}\right).
\end{equation*}
By Lemma \ref{lem32} \eqref{eq111}, we have that
\begin{equation*}
   \prod_{k=2}^{m-1}\prod_{\substack {l=k+1 \\ l \text{ even}}}^{m-1} \frac{R}{\delta_{l,l+1}} = \prod_{\substack {k=2 \\ k \text{ even}}}^{m-1}\prod_{\substack {l=k+1 \\ l \text{ even}}}^{m-1} \frac{R^2}{x_{l,l+1}}.
\end{equation*}
From Lemma \ref{lem32} \eqref{eq112}, it follows that 
\begin{equation*}
    \prod_{k=m}^{n-3}\prod_{\substack {l=k+2 \\ l \text{ odd} }}^{n-1}\frac{R}{\delta_{l,l+1}} = \prod_{\substack{k=m \\ k \text{ even}}}^{n-3}\prod_{\substack {l=k+2 \\ l \text{ odd }}}^{n-1}\frac{R^2}{x_{l,l+1}}.
\end{equation*}
Finally, we have that 
\begin{equation*}
    \prod_{\substack {l=m+1 \\ l \text{ odd}}}^{n-1} \frac{R^{m-2}}{\delta_{l,l+1}^{m-2}} = \prod_{\substack {l=m+1 \\ l \text{ odd}}}^{n-1} \frac{R^{m-2}}{x_{l,l+1}^{\frac{m-2}{2}}},
\end{equation*} so it follows that 
\begin{equation*}
Q = \left(\prod_{\substack {k=2 \\ k \text{ even}}}^{m-1}\prod_{\substack {l=k+1 \\ l \text{ even}}}^{m-1} \frac{R^2}{x_{l,l+1}}\right)\left(\prod_{\substack{k=m \\ k \text { even}}}^{n-3}\prod_{\substack {l=k+2 \\ l \text{ odd }}}^{n-1}\frac{R^2}{x_{l,l+1}}\right)\left(\prod_{\substack {l=m+1 \\ l \text{ odd}}}^{n-1} \frac{R^{m-2}}{x_{l,l+1}^{\frac{m-2}{2}}}\right).
\end{equation*}
Since 
\begin{equation*}
 \delta(m) = R^{\frac{n-2}{2}}\left( \prod_{k=1}^{m-1}\left(\prod_{\substack {l=k+1 \\ l \text{ even}}}^{m-1}S_{k,l,l+1} \prod_{\substack{l=m+1 \\ l \text{ odd }}}^{n-1}S_{k,l,l+1}\right)\right)\left(\prod_{k=m}^{n-3}\prod_{\substack {l=k+2 \\ l \text{ odd }}}^{n-1}S_{k+1,l,l+1}\right) \cdot Q,
\end{equation*}
we have that 
\begin{multline}
\delta(m) = R^{\frac{n-2}{2}} \left(\prod_{k=1}^{m-1}\left(\prod_{\substack {l=k+1 \\ l \text{ even}}}^{m-1}S_{k,l,l+1} \prod_{\substack{l=m+1 \\ l \text{ odd }}}^{n-1}S_{k,l,l+1}\right)\right)\left(\prod_{k=m}^{n-3}\prod_{\substack {l=k+2 \\ l \text{ odd }}}^{n-1}S_{k+1,l,l+1}\right) \\ \left(\prod_{\substack {k=2 \\ k \text{ even}}}^{m-1}\prod_{\substack {l=k+1 \\ l \text{ even}}}^{m-1} \frac{R^2}{x_{l,l+1}}\right)\left(\prod_{\substack{k=m \\ k \text { even}}}^{n-3}\prod_{\substack {l=k+2 \\ l \text{ odd }}}^{n-1}\frac{R^2}{x_{l,l+1}}\right)\left(\prod_{\substack {l=m+1 \\ l \text{ odd}}}^{n-1} \frac{R^{m-2}}{x_{l,l+1}^{\frac{m-2}{2}}}\right),
\end{multline}   
for $m$ even, $2 \leq m \leq n$. \\ \\
Let us now see how equality \eqref{eq34} looks like when $m$ is odd. We will look separately into the cases $m = 1$, $m = 3$ and $m > 3$. \\ \\ Firstly, let $m = 1$. Then 
\begin{equation*}
\begin{split}
\delta(m) = \delta(1)& = \delta_{23}\delta_{24}\delta_{34}\left(\prod_{\substack{l=5 \\ l \text{ odd}}}^{n-1}\delta_{2l}\delta_{2,l+1}\delta_{l,l+1}\right) \\ & \qquad \qquad  \left(\prod_{k=2}^{n-3}\left(\left(\delta_{k+1,k+2}\delta_{k+1,k+3}\right)^{(k \text{ mod 2})}\prod_{\substack {l=k+3 \\ l \text{ odd}}}^{n-1}\delta_{k+1,l}\delta_{k+1,l+1}\right)\right) \\ &= \delta_{23}\delta_{24}\delta_{34}\left(\prod_{\substack{l=5 \\ l \text{ odd}}}^{n-1}\delta_{2l}\delta_{2,l+1}\delta_{l,l+1}\right) \left(\prod_{k=2}^{n-3}\prod_{\substack {l=k+2 \\ l \text{ odd}}}^{n-1}\delta_{k+1,l}\delta_{k+1,l+1}\right) \\ &= 
S_{234}R\left(\prod_{\substack{l=5 \\ l \text{ odd}}}^{n-1}S_{2,l,l+1}R\right)\left(\prod_{k=2}^{n-3}\prod_{\substack {l=k+2 \\ l \text{ odd}}}^{n-1}\frac{S_{k+1,l,l+1}R}{\delta_{l,l+1}}\right).
\end{split}
\end{equation*}
Using Lemma \ref{lem32} \eqref{eq112}, it follows that 
\begin{equation*}
    \prod_{k=2}^{n-3}\prod_{\substack {l=k+2 \\ l \text{ odd}}}^{n-1}\frac{S_{k+1,l,l+1}R}{\delta_{l,l+1}} = \left(\prod_{k=2}^{n-3}\prod_{\substack {l=k+2 \\ l \text{ odd}}}^{n-1}S_{k+1,l,l+1}R\right) \left(\prod_{\substack{k=2 \\ k \text{ even}}}^{n-3}\prod_{\substack {l=k+2 \\ l \text{ odd}}}^{n-1}\frac{1}{x_{l,l+1}}\right),
\end{equation*}
so
\begin{equation*}
\begin{split}
    \delta(1) &= S_{234}R\left(\prod_{\substack{l=5 \\ l\text{ odd}}}^{n-1}S_{2,l,l+1}R\right)\left(\prod_{k=2}^{n-3}\prod_{\substack {l=k+2 \\ l \text{ odd}}}^{n-1}S_{k+1,l,l+1}R\right) \left(\prod_{\substack{k=2 \\ k \text{ even}}}^{n-3}\prod_{\substack {l=k+2 \\ l \text{ odd}}}^{n-1}\frac{1}{x_{l,l+1}}\right) \\ 
    &= \left(\prod_{\substack{l=3 \\ l \text{ odd}}}^{n-1}S_{2,l,l+1}R\right)\left(\prod_{k=2}^{n-3}\prod_{\substack {l=k+2 \\ l \text{ odd}}}^{n-1}S_{k+1,l,l+1}R\right) \left(\prod_{\substack{k=2 \\ k \text{ even}}}^{n-3}\prod_{\substack {l=k+2 \\ l \text{ odd}}}^{n-1}\frac{1}{x_{l,l+1}}\right) \\ 
    & = \left(\prod_{k=1}^{n-3}\prod_{\substack {l=k+2 \\ l \text{ odd}}}^{n-1}S_{k+1,l,l+1}R\right) \left(\prod_{\substack{k=2 \\ k \text{ even}}}^{n-3}\prod_{\substack {l=k+2 \\ l \text{ odd}}}^{n-1}\frac{1}{x_{l,l+1}}\right).
    \end{split}
\end{equation*}
Now let $m = 3$. We have: 
\begin{equation}
 \begin{split}   
 \delta(m) = \delta(3) &= \delta_{12}\delta_{14}\delta_{24}\left(\prod_{\substack{l=5 \\ l \text{ odd}}}^{n-1}\delta_{1l}\delta_{1,l+1}\delta_{l,l+1}\delta_{2,l}\delta_{2,l+1}\right)  \left(\prod_{k=3}^{n-3}\prod_{\substack{l=k+2 \\ l \text{ odd}}}^{n-1}\delta_{k+1,l}\delta_{k+1,l+1}\right) \\ &= S_{124}R\left(\prod_{\substack{l=5 \\ l \text{ odd}}}^{n-1}\left(S_{1,l,l+1}R\frac{S_{2,l,l+1}R}{\delta_{l,l+1}}\right)\right)\left(\prod_{k=3}^{n-3}\prod_{\substack{l=k+2 \\ l \text{ odd}}}^{n-1}\frac{S_{k+1,l,l+1}R}{\delta_{l,l+1}}\right)\\ 
 &= S_{124}R\left(\prod_{\substack{l=5 \\ l \text{ odd}}}^{n-1}S_{1,l,l+1}S_{2,l,l+1}R^2\right)\left(\prod_{\substack{l=5 \\ l \text{ odd}}}^{n-1}\frac{1}{x_{l,l+1}}\right)\left(\prod_{k=3}^{n-3}\prod_{\substack{l=k+2 \\ l \text{ odd}}}^{n-1}S_{k+1,l,l+1}R\right) \left(\prod_{k=4}^{n-3}\prod_{\substack{l=k+2 \\ l \text{ odd}}}^{n-1}\frac{1}{\delta_{l,l+1}}\right).
 \end{split}
\end{equation}
Using Lemma \ref{lem32} \eqref{eq112}, we have that
\begin{equation*}
    \prod_{k=4}^{n-3}\prod_{\substack{l=k+2 \\ l \text{ odd}}}^{n-1}\frac{1}{\delta_{l,l+1}} = \prod_{\substack{{k=4} \\ k \text{ even}}}^{n-3}\prod_{\substack{l=k+2 \\ l \text{ odd}}}^{n-1}\frac{1}{x_{l,l+1}}, 
\end{equation*}
and also using the fact that
\begin{equation*}
\prod_{\substack{{k=4} \\ k \text{ even}}}^{n-3}\prod_{\substack{l=k+2 \\ l \text{ odd}}}^{n-1}\frac{1}{x_{l,l+1}} = \prod_{\substack{{k=5} \\ k \text{ odd}}}^{n-3}\prod_{\substack{l=k+2 \\ l \text{ odd}}}^{n-1}\frac{1}{x_{l,l+1}},
\end{equation*}
we get
\begin{equation*}
    \delta(3) = S_{124}R\left(\prod_{\substack{l=5 \\ l \text{ odd }}}^{n-1}S_{1,l,l+1}S_{2,l,l+1}R^2\right)\left(\prod_{k=3}^{n-3}\prod_{\substack{l=k+2 \\ l \text{ odd}}}^{n-1}S_{k+1,l,l+1}R\right)\left(\prod_{\substack{k=3 \\k \text{ odd}}}^{n-3}\prod_{\substack{l=k+2 \\ l \text{ odd}}}^{n-1}\frac{1}{x_{l,l+1}}\right).
\end{equation*}
\\
Let us now consider $\delta(m)$, for odd $m$, $3 < m \leq n-1$. For such an $m$, we have that 
\begin{equation*}
\begin{split}
    \delta(m) &= \left(\prod_{\substack{l=2 \\ l \text{ even}}}^{m-2}\delta_{1,l}\delta_{1,l+1}\delta_{l,l+1}\right)\left(\delta_{1,m-1}\delta_{2,m-1}\right)(\delta_{1,m+1}\delta_{2,m+1}) \\ & \qquad \qquad \left(\prod_{\substack{l=m+2 \\ l \text{ odd}}}^{n-1}\delta_{1,l}\delta_{1,l+1}\delta_{l,l+1}\right)\left(\prod_{\substack{k=2 \\ k \text{ even}}}^{m-1}\left(\prod_{\substack{l=k+2 \\ l \text{even}}}^{m-2}\delta_{k,l}\delta_{k,l+1}\prod_{\substack{l=m+2 \\ l \text{ odd}}}^{n-1}\delta_{k,l}\delta_{k,l+1}\right)\right) \\ & \qquad \qquad 
    \left(\prod_{\substack{k=3 \\ k \text{ odd}}}^{m-4}\left(\delta_{k,m-1}\delta_{k+1,m-1}\delta_{k,m+1}\delta_{k+1,m+1}\prod_{\substack{l=k+1 \\ l \text{ even}}}^{m-2}\delta_{k,l}\delta_{k,l+1}\prod_{\substack{l=m+2 \\ l \text{ odd}}}^{n-1}\delta_{k,l}\delta_{k,l+1}\right)\right) \\ & \qquad \qquad  \left(\delta_{m-2,m-1}\delta_{m-2,m+1}\delta_{m-1,m+1}\right)\left(\prod_{\substack{l=m+2 \\ l \text{ odd}}}^{n-1}\delta_{m-2,l}\delta_{m-2,l+1}\right)\\ & \qquad \qquad \left(\prod_{\substack{k=m \\ k \text{ odd}}}^{n-3}\prod_{\substack{l=k+2 \\ l \text{ odd}}}^{n-1}\delta_{k+1,l}\delta_{k+1,l+1}\right) \left(\prod_{\substack{k=m \\ k \text{ even}}}^{n-3}\prod_{\substack{l=k+3 \\ l \text{ odd}}}^{n-1}\delta_{k+1,l}\delta_{k+1,l+1}\right).
    \end{split}
\end{equation*}
Let now 
\begin{equation*}
\begin{split}
    Q_1 :&=\left(\prod_{\substack{k=2 \\ k \text{ even}}}^{m-1}\left(\prod_{\substack{l=k+2 \\ l \text{ even}}}^{m-2}\delta_{k,l}\delta_{k,l+1}\prod_{\substack{l=m+2 \\ l \text{ odd}}}^{n-1}\delta_{k,l}\delta_{k,l+1}\right)\right) \\ & \qquad \qquad
    \left(\prod_{\substack{k=3 \\ k \text{ odd}}}^{m-4}\left(\delta_{k,m-1}\delta_{k+1,m-1}\delta_{k,m+1}\delta_{k+1,m+1}\prod_{\substack{l=k+1 \\ l \text{ even}}}^{m-2}\delta_{k,l}\delta_{k,l+1}\prod_{\substack{l=m+2 \\ l \text{ odd}}}^{n-1}\delta_{k,l}\delta_{k,l+1}\right)\right),
    \end{split}
   \end{equation*}
   and 
   \begin{equation}
   Q_2:= \left(\prod_{\substack{l=m+2 \\ l\text{ odd}}}^{n-1}\delta_{m-2,l}\delta_{m-2,l+1}\right)\left(\prod_{\substack{k=m \\ k \text{ odd}}}^{n-3}\prod_{\substack{l=k+2 \\ l \text{ odd}}}^{n-1}\delta_{k+1,l}\delta_{k+1,l+1}\right)\left(\prod_{\substack{k=m \\ k \text{ even}}}^{n-3}\prod_{\substack{l=k+3 \\ l \text{ odd}}}^{n-1}\delta_{k+1,l}\delta_{k+1,l+1}\right).
   \label{eq3000}
   \end{equation}
Then 
\begin{equation}
\begin{split}
    \delta(m) &= \frac{S_{1,2,m-1}S_{1,2,m+1}R^2}{x_{12}}\left(\prod_{\substack{l=2 \\ l \text{ even}}}^{m-2}S_{1,l,l+1}R\right)\left(\prod_{\substack{l=m+2 \\ l \text{ odd}}}^{n-1}S_{1,l,l+1}R\right) \\ & \qquad \qquad
     Q_1  \left(S_{m-2,m-1,m+1}R\right)  Q_2.
    \label{eq42}
    \end{split}
\end{equation}
We will now write $Q_1$ and $Q_2$ in terms of the entries of $F$.  
\begin{equation*}
\begin{split}
    Q_1 &= \left(\prod_{\substack{k=2 \\ k \text{ even}}}^{m-1}\left(\prod_{\substack{l=k+2 \\ l \text{ even}}}^{m-2} \frac{S_{k,l,l+1}R}{\delta_{l,l+1}}\prod_{\substack{l=m+2 \\ l \text{ odd}}}^{n-1}\frac{S_{k,l,l+1}R}{\delta_{l,l+1}}\right)\right)\\ & \qquad \qquad \left(\prod_{\substack{k=3 \\ k \text{ odd}}}^{m-4}\left(\frac{S_{k,k+1,m-1}S_{k,k+1,m+1}R^2}{x_{k,k+1}}\prod_{\substack{l=k+1 \\ l \text{ even}}}^{m-2}\frac{S_{k,l,l+1}R}{\delta_{l,l+1}}\prod_{\substack{l=m+2 \\ l \text{ odd}}}^{n-1}\frac{S_{k,l,l+1}R}{\delta_{l,l+1}}\right)\right) \\  &= \left(\prod_{\substack{l=m+2 \\ l \text{ odd}}}^{n-1}\frac{S_{m-1,l,l+1}R}{\delta_{l,l+1}}\right)\left(\prod_{\substack{l=m+2 \\ l \text{ odd}}}^{n-1}\frac{S_{m-3,l,l+1}R}{\delta_{l,l+1}}\right)\left(\prod_{\substack{k=2 \\ k \text{ even}}}^{m-5}\left(\prod_{\substack{l=k+2 \\ l \text{ even}}}^{m-2} \frac{S_{k,l,l+1}R}{\delta_{l,l+1}}\prod_{\substack{l=m+2 \\ l \text{ odd}}}^{n-1}\frac{S_{k,l,l+1}R}{\delta_{l,l+1}}\right)\right) \\ & \qquad \qquad \left(\prod_{\substack{k=3 \\ k \text{ odd}}}^{m-4}\left(\frac{S_{k,k+1,m-1}S_{k,k+1,m+1}R^2}{x_{k,k+1}}\prod_{\substack{l=k+1 \\ l \text{ even}}}^{m-2}\frac{S_{k,l,l+1}R}{\delta_{l,l+1}}\prod_{\substack{l=m+2 \\ l \text{ odd}}}^{n-1}\frac{S_{k,l,l+1}R}{\delta_{l,l+1}}\right)\right),
    \end{split}
\end{equation*}
where we use the fact that 
\begin{equation*}
\begin{split}
  \left( \prod_{\substack{k=2 \\ k \text{ even}}}^{m-1}\left(\prod_{\substack{l=k+2 \\ l \text{ even}}}^{m-2} \frac{S_{k,l,l+1}R}{\delta_{l,l+1}}\prod_{\substack{l=m+2 \\ l \text{ odd}}}^{n-1}\frac{S_{k,l,l+1}R}{\delta_{l,l+1}}\right)\right) &= \left(\prod_{\substack{l=m+2 \\ l \text{ odd}}}^{n-1}\frac{S_{m-1,l,l+1}R}{\delta_{l,l+1}}\right)\left(\prod_{\substack{l=m+2 \\ l \text{ odd}}}^{n-1}\frac{S_{m-3,l,l+1}R}{\delta_{l,l+1}}\right) \\ & \qquad \qquad \left(\prod_{\substack{k=2 \\ k \text{ even}}}^{m-5}\left(\prod_{\substack{l=k+2 \\ l \text{ even}}}^{m-2} \frac{S_{k,l,l+1}R}{\delta_{l,l+1}}\prod_{\substack{l=m+2 \\ l \text{ odd}}}^{n-1}\frac{S_{k,l,l+1}R}{\delta_{l,l+1}}\right) \right).
   \end{split}
\end{equation*}
Reorganizing terms we get that
\begin{equation*}
\begin{split}
Q_1 &= \left(\prod_{\substack{l=m+2 \\ l \text{ odd}}}^{n-1}\frac{S_{m-1,l,l+1}S_{m-3,l,l+1}R^2}{x_{l,l+1}}\right)\left(\prod_{\substack{k=2 \\ k \text{ even}}}^{m-5}\left(\prod_{\substack{l=k+2 \\ l \text{ even}}}^{m-2} \frac{1}{\delta_{l,l+1}}\prod_{\substack{l=m+2 \\ l \text{ odd}}}^{n-1}\frac{1}{\delta_{l,l+1}}\right)\right)\\ & \qquad \qquad \left(\prod_{\substack{k=3 \\ k \text{ odd}}}^{m-4}\left(\frac{S_{k,k+1,m-1}S_{k,k+1,m+1}R^2}{x_{k,k+1}}\prod_{\substack{l=k+1 \\ l \text{ even}}}^{m-2}\frac{S_{k,l,l+1}R}{\delta_{l,l+1}}\prod_{\substack{l=m+2 \\ l \text{ odd}}}^{n-1}\frac{1}{\delta_{l,l+1}}\right)\right) \\ 
& \qquad \qquad \left(\prod_{\substack{k=2 \\ k \text{ even}}}^{m-5}\prod_{\substack{l=k+2 \\ l \text{ even}}}^{m-2}S_{k,l,l+1}R\right) \left(\prod_{\substack{k=2 \\ k \text{ even}}}^{m-5}\prod_{\substack{l=m+2 \\ l \text{ odd}}}^{n-1}S_{k,l,l+1}R\right) \left(\prod_{\substack{k=3 \\ k \text{ odd}}}^{m-4}\prod_{\substack{l=m+2 \\ l \text{ odd}}}^{n-1}S_{k,l,l+1}R\right).
\end{split}
\end{equation*}
Let us now rewrite some terms of $Q_1$. Namely,
\begin{equation*}
    \prod_{\substack{k=2 \\ k \text{ even}}}^{m-5}\prod_{\substack{l=m+2 \\ l \text{ odd}}}^{n-1}\frac{1}{\delta_{l,l+1}} =  \prod_{\substack{k=3 \\ k \text{ odd}}}^{m-4}\prod_{\substack{l=m+2 \\ l \text{ odd}}}^{n-1}\frac{1}{\delta_{l,l+1}},
\end{equation*}
so 
\begin{equation*}
 \left(\prod_{\substack{k=2 \\ k \text{ even}}}^{m-5}\prod_{\substack{l=m+2 \\ l \text{ odd}}}^{n-1}\frac{1}{\delta_{l,l+1}}\right)  \left(\prod_{\substack{k=3 \\ k \text{ odd}}}^{m-4}\prod_{\substack{l=m+2 \\ l \text{ odd}}}^{n-1}\frac{1}{\delta_{l,l+1}}\right) = \prod_{\substack{k=3 \\ k \text{ odd}}}^{m-4}\prod_{\substack{l=m+2 \\ l \text{ odd}}}^{n-1}\frac{1}{x_{l,l+1}}.
\end{equation*}
We also have that 
\begin{equation}
\prod_{\substack{k=2 \\ k \text{ even}}}^{m-5}\prod_{\substack{l=k+2 \\ l \text{ even}}}^{m-2} \frac{1}{\delta_{l,l+1}} = \prod_{\substack{k=2 \\ k \text{ even}}}^{m-5}\prod_{\substack{l=k+2 \\ l \text{ even}}}^{m-3} \frac{1}{\delta_{l,l+1}} =\prod_{\substack{k=3 \\ k \text{ odd}}}^{m-4}\prod_{\substack{l=k+1 \\ l \text{ even}}}^{m-2} \frac{1}{\delta_{l,l+1}},
\label{eq1112}
\end{equation}
where the last equality is due to Lemma \ref{lemspr} \eqref{eq2}. \\ \\ Using \eqref{eq1112} it follows that 
\begin{equation*}
\left(\prod_{\substack{k=2 \\ k \text{ even}}}^{m-5}\prod_{\substack{l=k+2 \\ l \text{ even}}}^{m-2} \frac{1}{\delta_{l,l+1}}\right)\left(\prod_{\substack{k=3 \\ k \text{ odd}}}^{m-4}\prod_{\substack{l=k+1 \\ l \text{ even}}}^{m-2} \frac{S_{k,l,l+1}R}{\delta_{l,l+1}}\right) =  \prod_{\substack{k=3 \\ k \text{ odd}}}^{m-4}\prod_{\substack{l=k+1 \\ l \text{ even}}}^{m-2} \frac{S_{k,l,l+1}R}{x_{l,l+1}}.
\end{equation*}
Finally, using that 
\begin{equation*}
    \left(\prod_{\substack{k=2 \\ k \text{ even}}}^{m-5}\prod_{\substack{l=m+2 \\ l \text{ odd}}}^{n-1}S_{k,l,l+1}R\right) \left(\prod_{\substack{k=3 \\ k \text{ odd}}}^{m-4}\prod_{\substack{l=m+2 \\ l \text{ odd}}}^{n-1}S_{k,l,l+1}R\right) = \prod_{k=2}^{m-4}\left(\prod_{\substack{l=m+2 \\ l \text{ odd}}}^{n-1}S_{k,l,l+1}R\right),
\end{equation*}
we get
\begin{equation}
\begin{split}
Q_1 &= \left(\prod_{\substack{l=m+2 \\ l \text{ odd}}}^{n-1} \frac{S_{m-1,l,l+1}S_{m-3,l,l+1}R^2}{x_{l,l+1}}\right) \\ & \qquad \qquad \left(\prod_{\substack{k=3 \\ k \text{ odd}}}^{m-4}\left(\frac{S_{k,k+1,m-1}S_{k,k+1,m+1}R^2}{x_{k,k+1}}\prod_{\substack{l=k+1 \\ l \text{ even}}}^{m-2}\frac{S_{k,l,l+1}R}{x_{l,l+1}}\prod_{\substack{l=m+2 \\ l \text{ odd}}}^{n-1}\frac{1}{x_{l,l+1}}\right)\right)  \\ 
& \qquad \qquad \left(\prod_{\substack{k=2 \\ k \text{ even}}}^{m-5}\prod_{\substack{l=k+2 \\ l \text{ even}}}^{m-2} S_{k,l,l+1}R\right) \left(\prod_{k=2}^{m-4}\prod_{\substack{l=m+2 \\ l \text{ odd}}}^{n-1}S_{k,l,l+1}R\right).
\end{split}
\label{eqpone}
\end{equation}
Also, we can rewrite \eqref{eq3000} as 
\begin{equation}
 \begin{split}
 Q_2 &= \left(\prod_{\substack{l=m+2 \\ l \text{ odd}}}^{n-1} \frac {S_{m-2,l,l+1}R}{\delta_{l,l+1}}\right)\left(\prod_{\substack{k=m \\ k \text{ odd}}}^{n-3}\frac{S_{k+1,k+2,k+3}R}{\delta_{k+2,k+3}}\right) \\ & \qquad \qquad \left(\prod_{\substack{k=m \\ k \text{ odd}}}^{n-3}\prod_{\substack{l=k+3 \\ l \text{ odd}}}^{n-1}\frac{S_{k+1,l,l+1}R}{\delta_{l,l+1}}\right)\left(\prod_{\substack{k=m \\ k \text{ even}}}^{n-3}\prod_{\substack{l=k+3 \\ l \text{ odd}}}^{n-1}\frac{S_{k+1,l,l+1}R}{\delta_{l,l+1}}\right).
 \end{split}   
\end{equation}
Now, note that
\begin{equation*}
    \prod_{\substack{l=m+2 \\ l \text{ odd}}}^{n-1}\frac{1}{\delta_{l,l+1}} = \prod_{\substack{k=m \\ k \text{ odd }}}^{n-3}\frac{1}{\delta_{k+2,k+3}}, \end{equation*}so   
    \begin{equation*}
    \left(\prod_{\substack{l=m+2 \\ l \text{ odd}}}^{n-1}\frac{1}{\delta_{l,l+1}}\right) \left(\prod_{\substack{k=m \\ k \text{ odd} }}^{n-3}\frac{1}{\delta_{k+2,k+3}}\right) = \prod_{\substack{l=m+2 \\ l \text{ odd}}}^{n-1}\frac{1}{x_{l,l+1}}.
\end{equation*}
Additionally, using Lemma \ref{lemtpr} to conclude that 
\begin{equation*}
  \prod_{\substack{k=m \\ k \text{ odd}}}^{n-3}\prod_{\substack{l=k+3 \\ l \text{ odd}}}^{n-1}\frac{1}{\delta_{l,l+1}}  = \prod_{\substack{k=m \\ k \text{ even}}}^{n-3}\prod_{\substack{l=k+3 \\ l \text{ odd}}}^{n-1}\frac{1}{\delta_{l,l+1}},\end{equation*} 
  we obtain that
  \begin{equation*}
  \left(\prod_{\substack{k=m \\ k \text{ odd}}}^{n-3}\prod_{\substack{l=k+3 \\ l odd}}^{n-1}\frac{1}{\delta_{l,l+1}}\right) \left( \prod_{\substack{k=m \\ k \text{ even}}}^{n-3}\prod_{\substack{l=k+3 \\ l \text{ odd}}}^{n-1}\frac{1}{\delta_{l,l+1}}\right) = \prod_{\substack{k=m \\ k \text{ odd}}}^{n-3}\prod_{\substack{l=k+3 \\ l \text{ odd}}}^{n-1}\frac{1}{x_{l,l+1}}, 
\end{equation*}
and so we get
\begin{equation}
    \begin{split}
        Q_2 &= \left(\prod_{\substack{l=m+2 \\ l \text{ odd}}}^{n-1}\frac{S_{m-2,l,l+1}R}{x_{l,l+1}}\right)\left(\prod_{\substack{k=m \\ k \text{ odd}}}^{n-3} S_{k+1,k+2,k+3}R\right) \\ 
        & \qquad \qquad \left(\prod_{k=m}^{n-3}\prod_{\substack{l=k+3 \\ l \text{ odd}}}^{n-1}S_{k+1,l,l+1}R\right)\left(\prod_{\substack{k=m \\ k \text{ odd}}}^{n-3}\prod_{\substack{l=k+3 \\ l \text{ odd}}}^{n-1} \frac{1}{x_{l,l+1}}\right).
    \end{split}
    \label{eqptwo}
\end{equation}
Substituting \eqref{eqpone} and \eqref{eqptwo} into \eqref{eq42}, the statement of the theorem for $m$ odd, $3 < m \leq n-1$, follows.

\end{proof}
\begin{remark}Note that $\delta(m)$, as in Proposition \ref{prop9}, can be expressed in the form $\delta(m) = \frac{S(m)}{X(m)}$, where $S(m)$ is monomial in the entries of $F$ of the form $S_{***}$, and $X(m)$ is monomial in $R$ and the entries of $F$ of the form $x_{**}$.
\label{nap}
\end{remark}
\begin{prop} If $X(m)$ is as in Remark \ref{nap}, it holds that
\begin{equation*}
X(m) = \begin{cases}
         \begin{alignedat}{2}
&\frac{1}{R^{(\frac{n-2}{2})^2}}\prod_{\substack{l=m+1 \\ l \text{ odd}}}^{n-1} x_{l,l+1}^{\frac{l-3}{2}}\prod_{\substack{l=4 \\ l \text{ even}}}^{m-2} x_{l,l+1}^{\frac{l-2}{2}}; &\text{for even $m$, $2\leq m \leq n$} \\ 
& \frac{1}{R^{(\frac{n-2}{2})^2}} x_{12}^a \prod_{\substack{l=m+2 \\ l \text{ odd}}}^{n-1}x_{l,l+1}^{\frac{l-3}{2}}\prod_{\substack{l=3 \\ l \text{ odd}}}^{m-4}x_{l,l+1}\prod_{\substack{l=4 \\ l \text{ even}}}^{m-3}x_{l,l+1}^{\frac{l-2}{2}}; &\text{\quad for odd $m$, $1 \leq m \leq n-1$,}
 \end{alignedat}
 \end{cases}
    \end{equation*}
    where $a = 0$ for $m = 1$ and $m = 3$, and $a = 1$ for $m \geq 5$.
    \label{prop001}
\end{prop}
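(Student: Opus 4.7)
The plan is to derive $X(m)$ directly from Proposition \ref{prop38} by reading off, in each of the four cases of its formula for $\delta(m)$, the factors that involve $R$ and the boundary distances $x_{l,l+1}$, and separating them from the $S_{\star\star\star}$-factors that make up $S(m)$. Since Remark \ref{nap} writes $\delta(m)=S(m)/X(m)$, this determines $X(m)$ up to a regrouping of nested products. I therefore proceed case by case, corresponding to equations \eqref{deltam}, \eqref{dm}, \eqref{deltamm}, and \eqref{deltammm}.

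The key technical step in each case is to convert a nested double product $\prod_k\prod_l x_{l,l+1}^{\pm 1}$ into a single product $\prod_l x_{l,l+1}^{e(l)}$ by counting, for each fixed $l$, the admissible $k$. For even $m$, the first double product of \eqref{deltam} contributes, for fixed even $l\in[4,m-2]$, the exponent $\#\{k \text{ even} : 2\le k\le l-2\}=(l-2)/2$; the second and third products of \eqref{deltam} together contribute, for fixed odd $l\in[m+1,n-1]$, the exponent $(l-3)/2$. The reindexing moves of Lemmas \ref{lem4} and \ref{lemtpr}, together with the product-counting identities of Lemmas \ref{lemspr} and \ref{lem8}, provide the combinatorial tools needed for these simplifications. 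The cases $m=1$ and $m=3$ are read off directly from \eqref{dm} and \eqref{deltamm} by the same counting, and one checks immediately that $x_{12}$ does not appear, giving $a=0$ in these cases.

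The main obstacle is odd $m > 3$, where the formula \eqref{deltammm} contains several families of factors that each contribute to the exponent of the same $x_{l,l+1}$: the solitary factor $\frac{S_{1,2,m-1}S_{1,2,m+1}R^2}{x_{12}}$, the isolated terms $\frac{S_{k,k+1,m-1}S_{k,k+1,m+1}R^2}{x_{k,k+1}}$ for odd $k\in[3,m-4]$, the products indexed by $l$ odd in $[m+2,n-1]$ involving $S_{m-1,l,l+1}$, $S_{m-3,l,l+1}$ and $S_{m-2,l,l+1}$, and the nested double product $\prod_{k \text{ odd}}^{m-4}\prod_{l \text{ even or odd}}1/x_{l,l+1}$. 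For $l$ odd in $[m+2,n-1]$, one must sum four counts, namely $1$, $(m-5)/2$, $1$, and $(l-m-2)/2$, and verify that they add to $(l-3)/2$; this reduces to an elementary identity. For $l$ even in $[4,m-3]$, only one contribution remains, with count $(l-2)/2$; for $l$ odd in $[3,m-4]$, the isolated denominators $1/x_{k,k+1}$ alone contribute, with exponent one. The single factor $1/x_{12}$ survives exactly when $m\ge 5$, giving $a=1$ in that range. Finally, the exponent of $R$ in $X(m)$ is determined by summing all $R$-powers in \eqref{deltam}--\eqref{deltammm}; using Lemmas \ref{lemgrp} and \ref{lem8} to evaluate the double sums in closed form reduces the claim that the exponent equals $-((n-2)/2)^2$ to an elementary algebraic identity in $n$ and $m$ that can be verified directly.
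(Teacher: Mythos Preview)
Your proposal is correct and follows essentially the same approach as the paper: extract the $R$- and $x_{l,l+1}$-factors from the four formulas of Proposition~\ref{prop38}, collapse the nested double products into single products by counting, and verify the total $R$-exponent via the elementary summation lemmas. The paper carries this out in a more step-by-step fashion, applying Lemmas~\ref{lem4}, \ref{lemtpr}, \ref{lemspr}, \ref{lemgrp}, and \ref{lem8} one at a time to rewrite each nested product, whereas you phrase it as a direct exponent count for each fixed $l$; but the underlying computation (including your check $1+\tfrac{m-5}{2}+1+\tfrac{l-m-2}{2}=\tfrac{l-3}{2}$ for the odd-$m$ case) is the same.
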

\begin{proof}
Let $m$ be even, and $2 \leq m \leq n$. From Proposition \ref{prop38} \eqref{deltam} it follows that
\begin{equation*}
 X(m) = \frac{1}{R^{\frac{n-2}{2}}}\left(\prod_{\substack{k=2 \\ k \text{ even}}}^{m-1}\prod_{\substack{l=k+1 \\ l \text{ even}}}^{m-1} \frac{1}{R^2}x_{l,l+1}\right) \left(\prod_{\substack{k=m \\ k \text{ even}}}^{n-3}\prod_{\substack{l=k+2 \\ l \text{ odd}}}^{n-1} \frac{1}{R^2}x_{l,l+1}\right)\left(\prod_{\substack{l=m+1 \\ l \text{ odd}}}^{n-1} \frac{1}{R^{m-2}}x_{l,l+1}^{\frac{m-2}{2}}\right).
\end{equation*}
\noindent Using the fact that $m$ is even, we have that
\begin{equation*}
    \prod_{\substack{k=2 \\ k \text{ even}}}^{m-1}\prod_{\substack{l=k+1 \\ l \text{ even}}}^{m-1} \frac{1}{R^2}x_{l,l+1} = \prod_{\substack{k=2 \\ k \text{ even}}}^{m-4}\prod_{\substack{l=k+2 \\ l \text{ even}}}^{m-2} \frac{1}{R^2}x_{l,l+1}.
    \end{equation*} 
    We also have that 
    \begin{equation*}
    \begin{split}
     \prod_{\substack{k=2 \\ k \text{ even}}}^{m-4}\prod_{\substack{l=k+2 \\ l \text{ even}}}^{m-2}  \frac{1}{R^2}x_{l,l+1} &= \frac{1}{R^{\frac{(m-4)(m-2)}{4}}}\prod_{\substack{k=2 \\ k \text{ even}}}^{m-4}\prod_{\substack{l=k+2 \\ l \text{ even}}}^{m-2}x_{l,l+1} \\
     &= \frac{1}{R^{\frac{(m-4)(m-2)}{4}}}\prod_{\substack{l=4 \\ l \text{ even}}}^{m-2}x_{l,l+1}^{\frac{l-2}{2}},
     \end{split}
    \end{equation*}
    where the last equality is due to Lemma \ref{lemspr} \eqref{eq2}. \\ \\ Similarly, one also gets \begin{equation*}
    \begin{split}
        \prod_{\substack{k=m \\ k \text{ even}}}^{n-3}\prod_{\substack{l=k+2 \\ l \text{ odd}}}^{n-1} \frac{1}{R^2}x_{l,l+1} &= \prod_{\substack{k=m \\ k \text{ even}}}^{n-4}\prod_{\substack{l=k+3 \\ l \text{ odd}}}^{n-1} \frac{1}{R^2}x_{l,l+1}\\ &= \frac{1}{R^{\frac{(n-m)(n-m-2)}{4}}}\prod_{\substack{k=m \\k \text{ even}}}^{n-4}\prod_{\substack{l=k+3 \\ l \text{ odd}}}^{n-1}x_{l,l+1} \\
        &= \frac{1}{R^{\frac{(n-m)(n-m-2)}{4}}}\prod_{\substack{l=m+3 \\ l \text{ odd}}}^{n-1}x_{l,l+1}^{\frac{l-(m+1)}{2}},
        \end{split}
\end{equation*}
where the last equality is due to Lemma \ref{lemspr} \eqref{eq1}. \\ \\
It follows that 
\begin{equation}
\begin{split}
    X(m) &= \frac{1}{R^{\frac{n-2}{2}}}\frac{1}{R^{\frac{(m-4)(m-2)}{4}}}\left(\prod_{\substack{l=4 \\ l \text{ even}}}^{m-2}x_{l,l+1}^{\frac{l-2}{2}}\right)\frac{1}{R^{\frac{(n-m)(n-m-2)}{4}}}\left(\prod_{\substack{l=m+3 \\ l \text{ odd}}}^{n-1}x_{l,l+1}^{\frac{l-(m+1)}{2}}\right)\left(\prod_{\substack{l=m+1 \\ l \text{ odd}}}^{n-1}\frac{1}{R^{m-2}}x_{l,l+1}^{\frac{m-2}{2}}\right)\\ 
    &= \frac{1}{R^{\frac{n-2}{2}}}\frac{1}{R^{\frac{(m-4)(m-2)}{4}}}\frac{1}{R^{\frac{(n-m)(n-m-2)}{4}}}x_{m+1,m+2}^{\frac{m-2}{2}}\prod_{\substack{l=4 \\ l \text{ even}}}^{m-2}x_{l,l+1}^{\frac{l-2}{2}}\prod_{\substack{l=m+3 \\ l \text{ odd}}}^{n-1}x_{l,l+1}^{\frac{l-(m+1)}{2}}\prod_{\substack{l=m+1 \\ l \text{ odd}}}^{n-1}\frac{1}{R^{m-2}}\prod_{\substack{l=m+3 \\ l \text{ odd}}}^{n-1}x_{l,l+1}^{\frac{m-2}{2}}\\
    &= \frac{1}{R^{\frac{n-2}{2}}}\frac{1}{R^{\frac{(m-4)(m-2)}{4}}}\frac{1}{R^{\frac{(n-m)(n-m-2)}{4}}}\frac{1}{R^{\frac{(n-m)(m-2)}{2}}}x_{m+1,m+2}^{\frac{m-2}{2}}\prod_{\substack{l=m+3 \\ l \text{ odd}}}^{n-1}x_{l,l+1}^{\frac{l-3}{2}}\prod_{\substack{l=4 \\ l \text{ even}}}^{m-2}x_{l,l+1}^{\frac{l-2}{2}} \\
    &= \frac{1}{R^{(\frac{n-2}{2})^2}}\prod_{\substack{l=m+1 \\ l \text{ odd}}}^{n-1}x_{l,l+1}^{\frac{l-3}{2}}\prod_{\substack{l=4 \\ l \text{ even}}}^{m-2}x_{l,l+1}^{\frac{l-2}{2}},
    \end{split}
\end{equation}
for even $m$, $2 \leq m \leq n$. \\ \\ 
Let now $m = 1$, and recall that 
\begin{equation*}
    \delta(1) = \left(\prod_{k=1}^{n-3}\prod_{\substack {l=k+2 \\ l \text{ odd}}}^{n-1}S_{k+1,l,l+1}R\right) \left(\prod_{\substack{k=2 \\ k \text{ even}}}^{n-3}\prod_{\substack {l=k+2 \\ l \text{ odd}}}^{n-1}\frac{1}{x_{l,l+1}}\right).
\end{equation*}
It follows that 
\begin{equation*}
\begin{split}
X(1) &= \left(\prod_{k=1}^{n-3}\prod_{\substack{l=k+2 \\l \text{ odd}}}^{n-1}\frac{1}{R}\right)\left(\prod_{\substack {k=2 \\ k \text{ even}}}^{n-3}\prod_{\substack{l=k+2 \\ l \text{ odd}}}^{n-1}x_{l,l+1}\right)\\
&= \left(\prod_{k=1}^{n-3}\prod_{\substack{l=k+2 \\l \text{ odd}}}^{n-1}\frac{1}{R}\right)\left(\prod_{\substack {k=2 \\ k \text{ even}}}^{n-4}\prod_{\substack{l=k+3 \\ l odd}}^{n-1}x_{l,l+1} \right),
\end{split}
\end{equation*}
where the last equality is due to the fact that $n$ is even, and for $k$ even, $l = k+3$ is the smallest odd number such that $l\geq k+2$. \\ \\
\noindent Using Lemma \ref{lemgrp} \eqref{eqa} and Lemma \ref{lemspr} \eqref{eq1}, respectively, we get that 
\begin{equation*}
\prod_{k=1}^{n-3}\prod_{\substack{l=k+2 \\l \text{ odd}}}^{n-1}\frac{1}{R} = \frac{1}{R^{(\frac{n-2}{2})^{2}}}, \prod_{\substack {k=2 \\ k \text{ even}}}^{n-4}\prod_{\substack{l=k+3 \\ l \text{ odd}}}^{n-1}x_{l,l+1} = \prod_{\substack{l=5 \\ l \text{ odd}}}^{n-1}x_{l,l+1}^{\frac{l-3}{2}},
\end{equation*}
so 
\begin{equation*}
    X(1) =  \frac{1}{R^{(\frac{n-2}{2})^{2}}}  \prod_{\substack{l=5 \\ l \text{ odd}}}^{n-1}x_{l,l+1}^{\frac{l-3}{2}}.
    \end{equation*}
    Recall as well that 
    \begin{equation*}
    \delta(3) = S_{124}R\left(\prod_{\substack{l=5 \\ l \text{ odd }}}^{n-1}S_{1,l,l+1}S_{2,l,l+1}R^2\right)\left(\prod_{k=3}^{n-3}\prod_{\substack{l=k+2 \\ \text{ l odd}}}^{n-1}S_{k+1,l,l+1}R\right)\left(\prod_{\substack{k=3 \\k \text{ odd}}}^{n-3}\prod_{\substack{l=k+2 \\ l \text{ odd}}}^{n-1}\frac{1}{x_{l,l+1}}\right),
\end{equation*}
which we can rewrite as 
\begin{equation*}
\begin{split}
\delta(3) &= S_{124}\left(\prod_{\substack{l=5 \\ l \text{ odd}}}^{n-1}S_{1,l,l+1}S_{2,l,l+1}\right)\left(\prod_{k=3}^{n-3}\prod_{\substack{l=k+2 \\ l \text{ odd}}}^{n-1}S_{k+1,l,l+1}\right) \\ & \qquad \qquad \left(\prod_{k=1}^{n-3}\prod_{\substack{l=k+2 \\ l \text{ odd}}}^{n-1}R\right)\left(\prod_{\substack{k=3 \\k \text{ odd}}}^{n-3}\prod_{\substack{l=k+2 \\ l \text{ odd}}}^{n-1}\frac{1}{x_{l,l+1}}\right).
\end{split}
\end{equation*}
One easily checks that 
\begin{equation*}
\prod_{\substack{k=3 \\ k \text{ odd}}}^{n-3}\prod_{\substack{l=k+2 \\ l \text{ odd}}}^{n-1}x_{l,l+1} = \prod_{\substack{k=2 \\ k \text{ even}}}^{n-4}\prod_{\substack{l=k+3 \\ l \text{ odd}}}^{n-1}x_{l,l+1},
\end{equation*}
so we conclude that  $X(1) = X(3)$. \\  \\ 
Let now $m$ be odd, and $5 \leq m \leq n-1$. Using Proposition \ref{prop38} \eqref{deltammm}, Remark \ref{nap}, and combining and reorganizing some of the terms of $\delta(m)$, we get that
\begin{equation*}
\begin{split}
        X(m) &= \frac{1}{R^3} x_{12}\left(\prod_{\substack{l=m+2 \\ l \text{ odd}}}^{n-1}\frac{1}{R^4}x_{l,l+1}^2\right)\left(\prod_{\substack{k=3 \\ k \text{ odd}}}^{m-4}\left(\frac{1}{R^2}x_{k,k+1}\prod_{\substack{l=k+1 \\ l \text{ even}}}^{m-2}\frac{1}{R}x_{l,l+1}\prod_{\substack{l=m+2 \\ l \text{ odd}}}^{n-1}x_{l,l+1}\right)\right) 
        \left(\prod_{\substack{k=m \\ k \text{ odd }}}^{n-3}\prod_{\substack{l=k+3 \\ l \text{ odd}}}^{n-1}x_{l,l+1}\right) \\
        & \qquad \qquad \left(\prod_{\substack{l=2 \\ l \text{ even}}}^{m-2}\frac{1}{R}\right)  \left(\prod_{\substack{k=2 \\k \text{ even}}}^{m-5}\prod_{\substack{l=k+2 \\ l \text{ even}}}^{m-2}\frac{1}{R}\right)\left(\prod_{k=2}^{m-4}\prod_{\substack{l=m+2 \\ l \text{ odd}}}^{n-1}\frac{1}{R}\right) \left(\prod_{\substack{k=m \\ k \text{ odd}}}^{n-3}\frac{1}{R}\right)\left(\prod_{k=m}^{n-3}\prod_{\substack{l=k+3 \\ l \text{ odd}}}^{n-1}\frac{1}{R}\right).
 \end{split}       
\end{equation*}
Let us now combine and rewrite some terms of $X(m)$. Namely,
\begin{equation}
    \frac{1}{R^3}x_{12}\left(\prod_{\substack{l=m+2 \\ l \text{ odd}}}^{n-1}\frac{1}{R^4}x_{l,l+1}^2\right)\left(\prod_{\substack{k=m \\ k \text{ odd }}}^{n-3}\prod_{\substack{l=k+3 \\ l \text{ odd}}}^{n-1}x_{l,l+1}\right) = \frac{1}{R^{2n-2m+1}}x_{12}\left(\prod_{\substack{l=m+2 \\ l \text{ odd}}}^{n-1}x_{l,l+1}^{2}\right)\left(\prod_{\substack{k=m \\ k \text{ odd }}}^{n-3}\prod_{\substack{l=k+3 \\ l \text{ odd}}}^{n-1}x_{l,l+1}\right).
    \label{eq3}
\end{equation}
Using Lemma \ref{lemtpr} one gets that 
\begin{equation*}
    \prod_{\substack{k=m \\ k \text{ odd}}}^{n-3}\prod_{\substack{l=k+3 \\ l \text{ odd}}}^{n-1}x_{l,l+1} =   \prod_{\substack{k=m \\ k \text{ even}}}^{n-3}\prod_{\substack{l=k+3 \\ l \text{ odd}}}^{n-1}x_{l,l+1},
\end{equation*}
and using the facts that $m$ is odd, $n$ is even,
\begin{equation*}
     \prod_{\substack{k=m \\ k \text{ even}}}^{n-3}\prod_{\substack{l=k+3 \\ l \text{ odd}}}^{n-1}x_{l,l+1} =  \prod_{\substack{k=m+1 \\ k \text{ even}}}^{n-4}\prod_{\substack{l=k+3 \\ l \text{ odd}}}^{n-1}x_{l,l+1}.
\end{equation*}
Now, using Lemma \ref{lemspr} \eqref{eq1}, we have that \begin{equation*}
\prod_{\substack{k=m+1 \\ k \text{ even}}}^{n-4}\prod_{\substack{l=k+3 \\ l \text{ odd}}}^{n-1}x_{l,l+1} = \prod_{\substack{l=m+4 \\ l \text{ odd}}}^{n-1}x_{l,l+1}^{\frac{l-(m+2)}{2}} = \prod_{\substack{l=m+2 \\ l \text{ odd}}}^{n-1}x_{l,l+1}^{\frac{l-(m+2)}{2}},
\end{equation*} 
so we can  rewrite the equality \eqref{eq3} as 
\begin{equation}
    \frac{1}{R^3}x_{12}\left(\prod_{\substack{l=m+2 \\ l \text{ odd}}}^{n-1}\frac{1}{R^4}x_{l,l+1}^2\right)\left(\prod_{\substack{k=m \\ k \text{ odd }}}^{n-3}\prod_{\substack{l=k+3 \\ l \text{ odd}}}^{n-1}x_{l,l+1}\right) = \frac{1}{R^{2n-2m+1}}x_{12}\prod_{\substack{l=m+2 \\ l \text{ odd}}}^{n-1}x_{l,l+1}^{\frac{l-m+2}{2}}.
    \label{eq200}
    \end{equation}
Also, note that:
\begin{equation}
    \prod_{\substack{k=3 \\ k \text{ odd}}}^{m-4}\prod_{\substack{l=k+1 \\ l \text{ even}}}^{m-2}x_{l,l+1} = \prod_{\substack{k=2 \\ k \text{ even}}}^{m-4}\prod_{\substack{l=k+2 \\ l \text{ even}}}^{m-2}x_{l,l+1},
    \label{eq80}
\end{equation}
by Lemma \ref{lem4}, and 
\begin{equation}
    \prod_{\substack{k=2 \\ k \text{ even}}}^{m-4}\prod_{\substack{l=k+2 \\ l \text{ even}}}^{m-2}x_{l,l+1} = \prod_{\substack{k=2 \\ k \text{ even}}}^{m-5}\prod_{\substack{l=k+2 \\ l \text{ even}}}^{m-3}x_{l,l+1} = \prod_{\substack{l=4 \\ l \text{ even}}}^{m-3}x_{l,l+1}^{\frac{l-2}{2}}, 
    \label{eq90}
\end{equation}
 by Lemma \ref{lemspr} \eqref{eq2}, using the assumption that $m$ is odd. \\ \\
From \eqref{eq80} and \eqref{eq90}, it follows that 
\begin{equation}
    \prod_{\substack{k=3 \\ k \text{ odd}}}^{m-4}\prod_{\substack{l=k+1 \\ l \text{ even}}}^{m-2}x_{l,l+1} = \prod_{\substack{l=4 \\l \text{ even}}}^{m-3}x_{l,l+1}^{\frac{l-2}{2}}.
    \label{eq5}
\end{equation}
Using Lemma \ref{lem8} \eqref{eq10}, equality \eqref{eq5} and the fact that 
\begin{equation*}
 \prod_{\substack{k=3 \\ k \text{ odd}}}^{m-4} \frac{1}{R^2} = \frac{1}{R^{m-5}},\prod_{\substack{k=3 \\ k \text{ odd}}}^{m-4}\prod_{\substack{l=m+2 \\ l \text{ odd}}}^{n-1}x_{l,l+1} = \prod_{\substack{l=m+2 \\ l \text{ odd}}}^{n-1}x_{l,l+1}^{\frac{m-5}{2}},
    \end{equation*}gives us the following: 
\begin{equation}
\begin{split}
    \prod_{\substack{k=3 \\ k \text{ odd}}}^{m-4}\left(\frac{1}{R^2}x_{k,k+1}\prod_{\substack{l=k+1 \\ l \text{ even}}}^{m-2}\frac{1}{R}x_{l,l+1}\prod_{\substack{l=m+2 \\ l \text{ odd}}}^{n-1}x_{l,l+1}\right) &= \left(\prod_{\substack{k=3 \\ k \text{ odd}}}^{m-4}\frac{1}{R^2}\right)\left(\prod_{\substack{k=3 \\ k \text{ odd}}}^{m-4}\prod_{\substack{l=k+1 \\ l \text{ even}}}^{m-2}\frac{1}{R}\right)  \left(\prod_{\substack{k=3 \\ k \text{ odd}}}^{m-4}x_{k,k+1}\right) \\ & \qquad \qquad \left(\prod_{\substack{k=3 \\ k \text{ odd}}}^{m-4} \prod_{\substack{l=k+1 \\ l \text{ even}}}^{m-2}x_{l,l+1}\right) \left(\prod_{\substack{k=3 \\ k \text{ odd}}}^{m-4}\prod_{\substack{l=m+2 \\ l \text{ odd}}}^{n-1}x_{l,l+1}\right) \\ &= \frac{1}{R^{m-5}}  \frac{1}{R^{\frac{(m-3)(m-5)}{8}}}\prod_{\substack{l=3 \\ l \text{ odd}}}^{m-4}x_{l,l+1}\prod_{\substack{l=4 \\ l \text{ even}}}^{m-3}x_{l,l+1}^{\frac{l-2}{2}}\prod_{\substack{l=m+2 \\ l \text{ odd}}}^{n-1}x_{l,l+1}^{\frac{m-5}{2}}\\ &= \frac{1}{R^{\frac{m^2-25}{8}}}\prod_{\substack{l=3 \\ l \text{ odd}}}^{m-4}x_{l,l+1}\prod_{\substack{l=4 \\ l \text{ even}}}^{m-3}x_{l,l+1}^{\frac{l-2}{2}}\prod_{\substack{l=m+2 \\ l \text{ odd}}}^{n-1}x_{l,l+1}^{\frac{m-5}{2}}.
    \end{split}
    \label{eq100}
\end{equation}
Now, using the easily verifiable facts that
\begin{equation*}
 \prod_{\substack{l=2 \\ l \text{ even}}}^{m-2}\frac{1}{R} = \prod_{\substack{l=2 \\ l \text{ even}}}^{m-3}\frac{1}{R} = \frac{1}{R^{\frac{m-3}{2}}}, 
 \end{equation*}
 \begin{equation*}
     \prod_{\substack{k=m \\ k \text{ odd}}}^{n-3}\frac{1}{R} = \frac{1}{R^{\frac{n-m-1}{2}}},
 \end{equation*}
 and
 \begin{equation*}
     \prod_{k=2}^{m-4}\prod_{\substack{l=m+2 \\ l \text{ odd}}}^{n-1}\frac{1}{R} = \prod_{\substack{l=m+2 \\ l \text{ odd}}}^{n-1}\frac{1}{R^{m-5}} = \frac{1}{R^{\frac{(n-m-1)(m-5)}{2}}},
 \end{equation*} \\
 and Lemma \ref{lem8} \eqref{eq11} together with Lemma \ref{lemgrp} \eqref{eqb}, and the assumption that $m$ is odd, we conclude that 
 \begin{equation*}
\prod_{\substack{k=2 \\ k \text{ even}}}^{m-5}\prod_{\substack{l=k+2 \\ l \text{ even}}}^{m-2}  \frac{1}{R} = \prod_{\substack{k=2 \\ k \text{ even}}}^{m-4}\prod_{\substack{l=k+2 \\ l \text{ even}}}^{m-2}\frac{1}{R} = \frac{1}{R^{\frac{(m-3)(m-5)}{8}}},
 \end{equation*}
 and
 \begin{equation*}
     \prod_{k=m}^{n-3}\prod_{\substack{l=k+3 \\ 
 l \text{ odd}}}^{n-1}\frac{1}{R} = \frac{1}{R^{\frac{(n-m-1)(n-m-3)}{4}}}.
 \end{equation*}
 We get 
     \begin{equation}
  \left(\prod_{\substack{l=2 \\ l \text{ even}}}^{m-2}\frac{1}{R}\right) \left(\prod_{\substack{k=2 \\k \text{ even}}}^{m-5}\prod_{\substack{l=k+2 \\ l \text{ even}}}^{m-2}\frac{1}{R}\right)\left(\prod_{k=2}^{m-4}\prod_{\substack{l=m+2 \\ l \text{ odd}}}^{n-1}\frac{1}{R}\right) \left(\prod_{\substack{k=m \\ k \text{ odd}}}^{n-3}\frac{1}{R}\right)\left(\prod_{k=m}^{n-3}\prod_{\substack{l=k+3 \\ l \text{ odd}}}^{n-1}\frac{1}{R}\right) = \frac{1}{R^{\frac{-m^2+16m-24n+25+2n^2}{8}}}.
  \label{eq101}
 \end{equation}
Using \eqref{eq200}, \eqref{eq100} and \eqref{eq101}, we get that 
\begin{equation*}
\begin{split}
    X(m) &= \frac{1}{R^{(\frac{n-2}{2})^2}}x_{12}\prod_{\substack{l=m+2 \\ l \text{ odd}}}^{n-1}x_{l,l+1}^{\frac{l-m+2}{2}}\prod_{\substack{l=3 \\ l \text{ odd}}}^{m-4}x_{l,l+1}\prod_{\substack{l=4 \\ l \text{ even}}}^{m-3}x_{l,l+1}^{\frac{l-2}{2}}\prod_{\substack{l=m+2 \\ l \text{ odd}}}^{n-1}x_{l,l+1}^{\frac{m-5}{2}} \\ &= \frac{1}{R^{(\frac{n-2}{2})^2}}x_{12}\prod_{\substack{l=m+2 \\ l \text{ odd}}}^{n-1}x_{l,l+1}^{\frac{l-3}{2}}\prod_{\substack{l=3 \\ l \text{ odd}}}^{m-4}x_{l,l+1}\prod_{\substack{l=4 \\ l \text{ even}}}^{m-3}x_{l,l+1}^{\frac{l-2}{2}},
    \end{split}
\end{equation*}
for $m$ odd, $5 \leq m \leq n-1 $, completing the proof. 
 
    \end{proof}
    \begin{prop}If $S(m)$ is as in Remark \ref{nap}, it holds that
    \begin{equation}
     S(m) = 
     \left(\prod_{k=1}^{m-1}\left(\prod_{\substack {l=k+1 \\ l \text{ even}}}^{m-1}S_{k,l,l+1} \prod_{\substack{l=m+1 \\ l \text{ odd }}}^{n-1}S_{k,l,l+1}\right)\right)\left(\prod_{k=m}^{n-3}\prod_{\substack {l=k+2 \\ l \text{ odd }}}^{n-1}S_{k+1,l,l+1}\right),
     \label{sssm}
    \end{equation}
for $m$ even, $2 \leq m \leq n$;
    \begin{equation}
    \begin{split}
        S(m) &= S_{m-2,m-1,m+1}\left(\prod_{\substack{l=m+2 \\ l \text{ odd}}}^{n-1}S_{m-1,l,l+1}S_{m-2,l,l+1}S_{m-3,l,l+1}\right) \left(\prod_{k=m}^{n-3}\prod_{\substack{l=k+2 \\ l \text{ odd}}}^{n-1}S_{k+1,l,l+1}\right) \\ & \qquad \qquad  \left(\prod_{k=1}^{m-4}\left((S_{k,k+1,k+2}S_{k,k+1,m-1}S_{k,k+1,m+1})^{(k \text{ mod  
 2)}}\prod_{\substack{l=k+2 \\ l \text{ even}}}^{m-2}S_{k,l,l+1}\prod_{\substack{l=m+2 \\ l \text{ odd}}}^{n-1}S_{k,l,l+1}\right)\right), 
        \end{split}
    \end{equation}
    for odd $m$, $5 \leq m \leq n-1$;
    \begin{equation}
    S(1) = \prod_{k=1}^{n-3}\prod_{\substack{l=k+2 \\ l \text{ odd}}}^{n-1}S_{k+1,l,l+1};
    \label{nznm}
    \end{equation}
    and
    \begin{equation}
    S(3) = S_{124}\left(\prod_{\substack{l=5 \\ l \text{ odd}}}^{n-1}S_{1,l,l+1}S_{2,l,l+1}\right)\left(\prod_{k=3}^{n-3}\prod_{\substack{l=k+2 \\ l \text{ odd}}}^{n-1}S_{k+1,l,l+1}\right).
    \label{nznmm}
    \end{equation}
    \label{prop002}
    \end{prop}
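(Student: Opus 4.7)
The plan is to extract $S(m)$ directly from the formulas for $\delta(m)$ given in Proposition~\ref{prop38}, using the decomposition $\delta(m) = S(m)/X(m)$ from Remark~\ref{nap}. Since Proposition~\ref{prop001} already pins down $X(m)$ and the right-hand sides of Proposition~\ref{prop38} are written as products of $S_{***}$-terms, $R$-powers and $x_{**}$-terms, the task reduces to collecting the $S_{***}$-terms and verifying that what remains equals $R^{\bullet}\,X(m)^{-1}$ so that $S(m)$ is precisely the product of the $S_{***}$-factors appearing in $\delta(m)$.

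First I would treat the easy cases. For even $m$ with $2\le m\le n$, reading off \eqref{deltam} and discarding all $R$-powers and $x_{l,l+1}$-factors gives
\[
S(m)=\Bigl(\prod_{k=1}^{m-1}\Bigl(\prod_{\substack{l=k+1\\ l\text{ even}}}^{m-1}S_{k,l,l+1}\prod_{\substack{l=m+1\\ l\text{ odd}}}^{n-1}S_{k,l,l+1}\Bigr)\Bigr)\Bigl(\prod_{k=m}^{n-3}\prod_{\substack{l=k+2\\ l\text{ odd}}}^{n-1}S_{k+1,l,l+1}\Bigr),
\]
which is \eqref{sssm}. For $m=1$ and $m=3$, the same reading off from \eqref{dm} and \eqref{deltamm} immediately yields \eqref{nznm} and \eqref{nznmm}, respectively.

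The main step is the odd case $5\le m\le n-1$. Starting from \eqref{deltammm}, I would group the $S_{***}$-factors according to their first index: (a) the ``$k=1$ block'' contributes $S_{1,2,m-1}S_{1,2,m+1}$ together with $\prod_{l\text{ even}}^{m-2}S_{1,l,l+1}$ and $\prod_{l\text{ odd}}^{n-1}S_{1,l,l+1}$; (b) the ``$k$ odd, $3\le k\le m-4$'' block contributes $S_{k,k+1,m-1}S_{k,k+1,m+1}$ together with the two $l$-products; (c) the ``$k$ even, $2\le k\le m-5$'' block contributes $\prod_{l\text{ even}}S_{k,l,l+1}$ and $\prod_{l\text{ odd}}S_{k,l,l+1}$; (d) the ``middle" terms $S_{m-2,m-1,m+1}$, and $S_{m-1,l,l+1}$, $S_{m-3,l,l+1}$, $S_{m-2,l,l+1}$ for $l$ odd from $m+2$ to $n-1$; (e) the ``$k\ge m$'' tail contributing $S_{k+1,k+2,k+3}$ for $k$ odd and $S_{k+1,l,l+1}$ for $l$ odd from $k+3$ to $n-1$. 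I would then recombine (a)--(c) into the single product
\[
\prod_{k=1}^{m-4}\Bigl((S_{k,k+1,k+2}S_{k,k+1,m-1}S_{k,k+1,m+1})^{(k\bmod 2)}\prod_{\substack{l=k+2\\ l\text{ even}}}^{m-2}S_{k,l,l+1}\prod_{\substack{l=m+2\\ l\text{ odd}}}^{n-1}S_{k,l,l+1}\Bigr),
\]
noting that the exponent $(k\bmod 2)$ kills the $S_{k,k+1,*}$ factors when $k$ is even (in which case they do not appear in \eqref{deltammm} either), and that the shift of the lower $l$-bound from $k+1$ to $k+2$ in the even inner product is harmless because $l=k+1$ is odd when $k$ is even. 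Similarly the tail (e) would be combined into $\prod_{k=m}^{n-3}\prod_{l=k+2,\ l\text{ odd}}^{n-1}S_{k+1,l,l+1}$ by absorbing the $k=m+2j+1$ contribution $S_{k+1,k+2,k+3}$ as the $l=k+2$ term.

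The main obstacle will be the careful bookkeeping in this odd case: one has to check (i) the parity conventions on the inner $l$-index (odd vs.\ even), (ii) that no $S_{***}$-factor is double-counted when merging the $k$-odd and $k$-even blocks into a single $k=1,\dots,m-4$ product, and (iii) that the three ``middle'' factors $S_{m-1,l,l+1}S_{m-2,l,l+1}S_{m-3,l,l+1}$ correctly collect the contributions from $k=m-1$ (via the $Q_1$ block), from the explicit $Q_2$-term, and from $k=m-3$ (via block~(b) for $k=m-3$). Once these identifications are verified the four stated formulas follow, and the proposition is proved by invoking Proposition~\ref{prop38} together with the uniqueness of the $S$-versus-$x$ separation in Remark~\ref{nap}.
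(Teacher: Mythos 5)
Your proposal is correct and follows essentially the same route as the paper: read the $S_{***}$-monomial off the formulas for $\delta(m)$ in Proposition~\ref{prop38} via the separation of Remark~\ref{nap}, then for odd $m\ge 5$ merge the $k=1$ contribution, the odd-$k$ and even-$k$ blocks, and the tail into the single $\prod_{k=1}^{m-4}$ product with the $(k\bmod 2)$ exponent and the combined $\prod_{k=m}^{n-3}\prod_{l=k+2}$ tail. The paper just carries out the reindexing identities you list as bookkeeping checks explicitly.
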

    \begin{proof}
    Equality \eqref{sssm} follows from Proposition \ref{prop9} \eqref{deltam}, and equalities \eqref{nznm} and \eqref{nznmm}
     follow from Proposition \ref{prop9} \eqref{dm} and Proposition \ref{prop9} \eqref{deltamm}, respectively.\\ \\
     From Proposition \ref{prop9}\eqref{deltammm} it follows that 
     \begin{equation*}
     \begin{split}
         S(m) &= S_{1,2,m-1}S_{1,2,m+1}S_{m-2,m-1,m+1}\left(\prod_{\substack{l=2 \\ l \text{ even}}}^{m-2}S_{1,l,l+1}\right)\left(\prod_{\substack{k=m \\ k \text{ odd}}}^{n-3}S_{k+1,k+2,k+3}\right)\left(\prod_{k=m}^{n-3}\prod_{\substack{l=k+3 \\ l \text{ odd}}}^{n-1}S_{k+1,l,l+1}\right) \\ & \qquad \qquad \left(\prod_{\substack{l=m+2 \\ l \text{ odd}}}^{n-1}S_{1,l,l+1}S_{m-1,l,l+1}S_{m-2,l,l+1}S_{m-3,l,l+1}\right) \left(\prod_{\substack{k=3\\ k \text{ odd}}}^{m-4}\left(S_{k,k+1,m-1}S_{k,k+1,m+1}\prod_{\substack{l=k+1 \\ l \text{ even}}}^{m-2}S_{k,l,l+1}\right)\right) \\ & \qquad \qquad \left(\prod_{\substack{k=2 \\ k \text{ even}}}^{m-4}\prod_{\substack{l=k+2 \\ l \text{ even}}}^{m-2}S_{k,l,l+1}\right) \left(\prod_{k=2}^{m-4}\prod_{\substack{l=m+2 \\ l odd}}^{n-1}S_{k,l,l+1}\right),
         \end{split}
         \end{equation*}
         for $m$ odd, $5 \leq m \leq n-1$. \\ \\
Since 
\begin{equation*}
    S_{1,2,m-1}S_{1,2,m+1}\left(\prod_{\substack{k=3 \\ k \text{ odd}}}^{m-4}S_{k,k+1,m-1}S_{k,k+1,m+1}\right) = \prod_{\substack{k=1 \\ k \text{ odd}}}^{m-4}S_{k,k+1,m-1}S_{k,k+1,m+1},
\end{equation*}
\begin{equation*}
    \left(\prod_{\substack{l=m+2 \\ l \text{ odd}}}^{n-1}S_{1,l,l+1}\right)\left(\prod_{k=2}^{m-4}\prod_{\substack{l=m+2 \\ l \text{ odd}}}^{n-1}S_{k,l,l+1}\right) = \prod_{k=1}^{m-4}\prod_{\substack{l=m+2\\ l \text{ odd}}}^{n-1}S_{k,l,l+1},
\end{equation*}
and
\begin{equation*}
    \left(\prod_{k=m}^{n-3}\prod_{\substack{l=k+3 \\ l \text{ odd}}}^{n-1}S_{k+1,l,l+1}\right)\left(\prod_{\substack{k=m \\ k \text{ odd}}}^{n-3}S_{k+1,k+2,k+3}\right) = \prod_{k=m}^{n-3}\prod_{\substack{l=k+2 \\ l \text{ odd}}}^{n-1}S_{k+1,l,l+1},
\end{equation*}
it follows that, for $m$ odd, $5 \leq m \leq n-1$,
\begin{equation*}
\begin{split}
S(m) &= 
S_{m-2,m-1,m+1}\left(\prod_{\substack{l=2 \\ l \text{ even}}}^{m-2}S_{1,l,l+1}\right) \left(\prod_{k=m}^{n-3}\prod_{\substack{l=k+2 \\ l \text{ odd}}}^{n-1}S_{k+1,l,l+1}\right)\left(\prod_{\substack{l=m+2 \\ l \text{ odd}}}^{n-1}S_{m-1,l,l+1}S_{m-2,l,l,l+1}S_{m-3,l,l+1}\right) \\ & \qquad \qquad \left(\prod_{\substack{k=1 \\ k \text{ odd}}}^{m-4}S_{k,k+1,m-1}S_{k,k+1,m+1}\right)  \left(\prod_{\substack{k=3 \\ k \text{ odd}}}^{m-4}\prod_{\substack{l=k+1 \\ l \text{ even}}}^{m-2}S_{k,l,l+1}\right)\left(\prod_{\substack{k=2 \\ k \text{ even}}}^{m-4}\prod_{\substack{l=k+2 \\ l \text{ even}}}^{m-2}S_{k,l,l+1}\right)\left(\prod_{k=1}^{m-4}\prod_{\substack{l=m+2 \\ l \text{ odd}}}^{n-1}S_{k,l,l+1}\right).
\end{split}
\end{equation*}
Now, using that 
\begin{equation*}
    \left(\prod_{\substack{k=2 \\ k \text{ even}}}^{m-4}\prod_{\substack{l=k+2 \\ l \text{ even}}}^{m-2}S_{k,l,l+1}\right)\left(\prod_{\substack{k=3 \\ k \text{ odd}}}^{m-4}\prod_{\substack{l=k+1 \\ l \text{ even}}}^{m-2}S_{k,l,l+1}\right) = \left(\prod_{k=2}^{m-4}\prod_{\substack{l=k+2 \\ l \text{ even}}}^{m-2}S_{k,l,l+1}\right)\left(\prod_{\substack{k=3 \\ k \text{ odd}}}^{m-4}S_{k,k+1,k+2}\right),
\end{equation*}
we have 
\begin{equation}
\begin{split}
S(m) &= S_{m-2,m-1,m+1}  \left(\prod_{k=m}^{n-3}\prod_{\substack{l=k+2 \\ l \text{ odd}}}^{n-1}S_{k+1,l,l+1}\right) \left(\prod_{\substack{l=m+2 \\ l \text{ odd}}}^{n-1}S_{m-1,l,l+1}S_{m-2,l,l+1}S_{m-3,l,l+1}\right)\\ & \qquad \qquad \left(\prod_{\substack{k=1 \\ k \text{ odd}}}^{m-4}S_{k,k+1,m-1}S_{k,k+1,m+1}\right)\left(\prod_{k=1}^{m-4}\prod_{\substack{l=m+2 \\ l \text{ odd}}}^{n-1}S_{k,l,l+1}\right) \left(\prod_{\substack{l=2 \\ l \text{ even}}}^{m-2}S_{1,l,l+1}\right) \\ &\qquad \qquad \left(\prod_{\substack{k=3 \\ k \text{ odd}}}^{m-4}S_{k,k+1,k+2}\right)\left(\prod_{k=2}^{m-4}\prod_{\substack{l=k+2 \\ l \text{ even}}}^{m-2}S_{k,l,l+1}\right).
\end{split}
\label{nnn}
\end{equation}
Finally, noting that
\begin{equation*}
    \prod_{\substack{l=2 \\ l \text{ even}}}^{m-2}S_{1,l,l+1} = S_{123}\prod_{\substack{l=4 \\ l \text{ even}}}^{m-2}S_{1,l,l+1},
\end{equation*}
and then applying equalities 
\begin{equation*}
    S_{123}\prod_{\substack{k=3 \\ k \text{ odd}}}^{m-4}S_{k,k+1,k+2} = \prod_{\substack{k=1 \\ k \text{ odd}}}^{m-4}S_{k,k+1,k+2},
\end{equation*}
and
\begin{equation*}
    \left(\prod_{\substack{l=4 \\ l \text{ even}}}^{m-2}S_{1,l,l+1}\right)\left(\prod_{k=2}^{m-4}\prod_{\substack{l=k+2 \\ l \text{ even}}}^{m-2}S_{k,l,l+1}\right) = \prod_{k=1}^{m-4}\prod_{\substack{l=k+2 \\ l \text{ even}}}^{m-2}S_{k,l,l+1}
\end{equation*}
to \eqref{nnn}, gives
\begin{equation*}
\begin{split}
    S(m) & = S_{m-2,m-1,m+1} \left(\prod_{k=m}^{n-3}\prod_{\substack{l=k+2 \\ l \text{ odd}}}^{n-1}S_{k+1,l,l+1}\right)\left(\prod_{\substack{l=m+2 \\ l \text{ odd}}}^{n-1}S_{m-1,l,l+1}S_{m-2,l,l+1}S_{m-3,l,l+1}\right) \\ & \qquad \qquad \left(\prod_{\substack{k=1 \\ k \text{ odd}}}^{m-4}S_{k,k+1,m-1}S_{k,k+1,m+1}\right) \left(\prod_{k=1}^{m-4}\prod_{\substack{l=m+2 \\ l \text{ odd}}}^{n-1}S_{k,l,l+1}\right) \left(\prod_{\substack{k=1 \\ k \text{ odd}}}^{m-4}S_{k,k+1,k+2}\right)\left(\prod_{k=1}^{m-4}\prod_{\substack{l=k+2 \\ l \text{ even}}}^{m-2}S_{k,l,l+1}\right) \\ & = S_{m-2,m-1,m+1}\left(\prod_{\substack{l=m+2 \\ l \text{ odd}}}^{n-1}S_{m-1,l,l+1}S_{m-2,l,l+1}S_{m-3,l,l+1}\right) \left(\prod_{k=m}^{n-3}\prod_{\substack{l=k+2 \\ l \text{ odd}}}^{n-1}S_{k+1,l,l+1}\right) \\ & \qquad \qquad \left(\prod_{k=1}^{m-4}\left(\left(S_{k,k+1,k+2}S_{k,k+1,m-1}S_{k,k+1,m+1}\right)^{(k \text{ mod 
 2)}}\prod_{\substack{l=k+2 \\ l \text{ even}}}^{m-2}S_{k,l,l+1}\prod_{\substack{l=m+2 \\ l \text{ odd}}}^{n-1}S_{k,l,l+1}\right)\right),
\end{split}
\end{equation*}
for $m$ odd, $5 \leq m \leq n-1$.
         \end{proof}
 \begin{prop} Let $X(m)$, for $1\leq m \leq n$, be as in Proposition \ref{prop001}. Then 
     \begin{equation*}
    L = \lcm (X(m))_{1 \leq m \leq n} = \frac{1}{R^{(\frac{n-2}{2})^2}}x_{12}x_{34}^{b}\prod_{\substack{l=4 \\ l \text{ even}}}^{n-2}x_{l,l+1}^{\frac{l-2}{2}}\prod_{\substack{l=5 \\ l \text{ odd}}}^{n-1}x_{l,l+1}^{\frac{l-3}{2}},
\end{equation*}
where $b = 0$ for $n = 6$, and $b = 1$ for $n \geq 8$.
\label{lcm}
     \end{prop}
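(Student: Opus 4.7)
The plan is to exploit the fact that, by Proposition~\ref{prop001}, each $X(m)$ is a rational monomial in $R^{-1}$, in $x_{12}$, and in the boundary-edge variables $x_{l,l+1}$ for $3 \le l \le n-1$. The least common multiple of a finite family of such monomials is obtained, variable by variable, by taking the maximum exponent attained. So the task reduces to checking, for each variable appearing in the proposed $L$, that its exponent equals the maximum of its exponents across all $X(m)$, and that no other variable appears.

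I would handle the easy cases first. The factor $R^{-(\frac{n-2}{2})^2}$ is common to every $X(m)$, so it enters $L$ unchanged. The variable $x_{12}$ appears in $X(m)$ only via the prefactor $x_{12}^{a}$ when $m$ is odd and $m \ge 5$; since $n \ge 6$ is even, $m = 5$ is always admissible, so $x_{12}$ divides $L$ with exponent exactly $1$.

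Next I would treat the bulk variables $x_{l,l+1}$. For even $l$ with $4 \le l \le n-2$, inspection of the two formulas in Proposition~\ref{prop001} shows that $x_{l,l+1}$ occurs exclusively in the last product of $X(m)$: for even $m \ge l+2$, or for odd $m \ge l+3$, always with exponent $(l-2)/2$. Taking $m = n$ (even and at least $l+2$, because $l \le n-2$) realises this exponent, giving the factor $x_{l,l+1}^{(l-2)/2}$ in $L$. For odd $l$ with $5 \le l \le n-1$, the variable $x_{l,l+1}$ contributes exponent $(l-3)/2$ via the first product of $X(m)$ whenever $m \le l-1$ is even or $m \le l-2$ is odd, and exponent $1$ via the middle product $\prod_{l=3,\,l\text{ odd}}^{m-4} x_{l,l+1}$ whenever $m \ge l+4$ is odd. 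Since $(l-3)/2 \ge 1$ for $l \ge 5$, the maximum is $(l-3)/2$, already attained at $m = 2$.

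The remaining and most delicate case is $x_{34}$, i.e.\ $l = 3$ (odd), which is the sole source of the constant $b$. Because $(l-3)/2 = 0$ here, the first-product contributions vanish identically, and the only place $x_{34}$ can arise is the middle product $\prod_{l=3,\,l\text{ odd}}^{m-4} x_{l,l+1}$ in $X(m)$ for odd $m \ge 7$, which yields exponent $1$. Such an $m$ lies in the admissible range $[1,n-1]$ precisely when $n \ge 8$, giving $b = 1$; for $n = 6$ no such $m$ exists and $x_{34}$ does not divide any $X(m)$, giving $b = 0$. The main obstacle is simply the bookkeeping: one must carefully match each product in Proposition~\ref{prop001} to the corresponding exponent in $L$, keep track of empty products at the boundaries (smallest and largest $m$), and verify the small case $n = 6$ directly, since there the middle products collapse and one has to confirm $L$ against the short explicit list of $X(m)$.
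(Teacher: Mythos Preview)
Your proof is correct and follows essentially the same approach as the paper's: both determine the least common multiple by computing, for each variable $x_{l,l+1}$ (and for $R^{-1}$), the maximum exponent occurring among the $X(m)$ from the explicit formulas of Proposition~\ref{prop001}. The paper organises this casework into four tables (even $l\ge 4$, odd $l\ge 5$, $x_{12}$, $x_{34}$) while you present the same case distinctions in prose, but the content and the conclusions drawn are identical.
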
 
     \begin{proof}Let $l \in \{1,2,3,...,n-1\}$, and let us find the maximum exponent of $x_{l,l+1}$ amongst the $X(m)$ for $m \in \{1,2,...,n\}$. \\
Note that exponent of $x_{23}$ is equal to zero in each $X(m)$, for $m \in \{1,2,...,n\}$.\\
Tables \ref{tab:table 1}, \ref{tab:table 2}, \ref{tab:table 3} and \ref{tab: table 4}, given below show the exponents of $x_{l,l+1}$ in the $X(m)$.

\begin{table}[h!]
 \centering
 \begin{tabular}{|c|c|}
 \hline
 $\mathbf{range \quad of \quad  values \quad  of \quad m}$ & \textbf{exponent of $\mathbf{x_{l,l+1}}$ in $\mathbf{X(m)}$} \\
 \hline
 $m$ even, such that $l+2\leq m \leq n$ & $\frac{l-2}{2}$
 \\ \hline
 $m$ even, such that $m<l+2$ & $0$
 \\ \hline
 $m$ odd, such that $m<l+3$ & $0$
 \\ \hline
 $m$ odd, such that $l+3 \leq m \leq n-1$ & $\frac{l-2}{2}$\\ 
 \hline
 \end{tabular}
 \caption{Exponents of $x_{l,l+1}$ for $l$ even, $4\leq l \leq n-2$}
 \label{tab:table 1}
\end{table}
\begin{table}[h!]
 \centering
 \begin{tabular}{|c|c|}
 \hline
 $\mathbf{range \quad of \quad values \quad of \quad m}$ & \textbf{exponent of $\mathbf{x_{l,l+1}}$ in $\mathbf{X(m)}$} \\
 \hline
 $m$ even, such that $2\leq m \leq l-1$ & $\frac{l-3}{2}$
 \\ \hline
 $m$ even, such that $l-1< m\leq n$ & $0$
 \\ \hline
 $m$ odd, such that $1 \leq m \leq l-2$ & $\frac{l-3}{2}$
 \\ \hline
 $m$ odd, such that $l-2 < m <l+4$ & $0$\\ 
 \hline
 $m$ odd, such that $l+4 \leq m \leq n-1$ & $1$ \\
 \hline
 \end{tabular}
 \caption{Exponents of $x_{l,l+1}$ for $l$ odd, $5\leq l \leq n-1$}
 \label{tab:table 2}
\end{table}
\begin{table}[h!]
 \centering
 \begin{tabular}{|c|c|}
 \hline
 $\mathbf{range \quad of \quad values \quad of \quad m}$ & \textbf{exponent of $\mathbf{x_{12}}$ in $\mathbf{X(m)}$} \\
 \hline
 $m$ odd, such that $5 \leq m \leq n-1$ & $1$
 \\ \hline
 $m \in \{1,3\}$ & $0$
 \\ \hline
 $m$ even, such that $2 \leq m \leq n$ & $0$
 \end{tabular}
 \caption{Exponents of $x_{12}$}
  \label{tab:table 3}
\end{table}
\begin{table}[h!]
 \centering
 \begin{tabular}{|c|c|}
 \hline
 $\mathbf{range \quad of \quad values \quad of \quad m}$ & \textbf{exponent of $\mathbf{x_{34}}$ in $\mathbf{X(m)}$} \\
 \hline
 $m$ odd, such that $7\leq m \leq n-1$ & $1$
 \\ \hline
 $m \in \{1,3,5\}$ & $0$
 \\ \hline
 $m$ even, such that $2 \leq m \leq n$ & $0$
 \end{tabular}
 \caption{Exponents of $x_{34}$}
  \label{tab: table 4}
\end{table}
\noindent From Table \ref{tab:table 1}, we conclude that,  for $l$ even, $4 \leq l \leq n-2$, the maximum of all exponents of $x_{l,l+1}$, in $X(m)$, where $m \in \{1,2,...,n\}$, is $\frac{l-2}{2}$.
 \\ Since for $l \geq 5$, $\frac{l-3}{2} \geq 1$, we conclude  from Table \ref{tab:table 2} that, for $l$ odd, $5 \leq l \leq n-1$, the maximum exponent of $x_{l,l+1}$ is $\frac{l-3}{2}$. From Table \ref{tab:table 3}, we have that the maximum exponent of $x_{12}$ is $1$, and from Table \ref{tab: table 4} that the maximum exponent of $x_{34}$ is $0$, for $n = 6$, and $1$ for $n\geq 8$. \\ \\
Hence,
\begin{equation*}
    \lcm(X(m))_{1 \leq m \leq n} = \frac{1}{R^{(\frac{n-2}{2})^2}}x_{12}x_{34}^{c}\prod_{\substack{l=4 \\ l \text{ even}}}^{n-2}x_{l,l+1}^{\frac{l-2}{2}}\prod_{\substack{l=5 \\ l \text{ odd}}}^{n-1}x_{l,l+1}^{\frac{l-3}{2}},
\end{equation*}
where $c = 0$ for $n = 6$, and $c = 1$ for $n \geq 8$.
\end{proof}
\begin{proof}[Proof of Theorem 3.1] Multiplying the equality \eqref{eq31} by $L$, as given in Proposition \ref{lcm}, we get that
\begin{equation}
    \sum_{m=1}^{n}\left[\frac{(-1)^{m+1}}{R^{(\frac{n-2}{2})^2}}x_{12}x_{34}^c\prod_{\substack{l=4 \\ l \text{ even}}}^{n-2}x_{l,l+1}^{\frac{l-2}{2}}\prod_{\substack{l=5 \\ l \text{ odd}}}^{n-1}x_{l,l+1}^{\frac{l-3}{2}}\delta(m)\right] = 0,
    \label{eq1001}
\end{equation}
where $c = 0$ for $n = 6$, and $c = 1$ for $n \geq 8$. \\ \\
Using Remark \ref{nap}, we can rewrite \eqref{eq1001} as 
\begin{equation}
   \sum_{m=1}^{n}\left[\frac{(-1)^{m+1}}{R^{(\frac{n-2}{2})^2}}x_{12}x_{34}^c\prod_{\substack{l=4 \\ l \text{ even}}}^{n-2}x_{l,l+1}^{\frac{l-2}{2}}\prod_{\substack{l=5 \\ l \text{ odd}}}^{n-1}x_{l,l+1}^{\frac{l-3}{2}}\frac{S(m)}{X(m)}\right] = 0.
   \label{eq2002}
\end{equation}
Now, in order to prove the theorem, we need to show that $\frac{L}{X(m)} = x(m)$. \\To do that, let us look at the quotient $\frac{L}{X(m)}.$\\ \\ Namely, from Proposition \ref{prop001}, we have that 
\begin{equation*}
    X(m) = \frac{1}{R^{(\frac{n-2}{2})^2}}\prod_{\substack{l=m+1 \\ l \text{ odd}}}^{n-1} x_{l,l+1}^{\frac{l-3}{2}}\prod_{\substack{l=4 \\ l \text{ even}}}^{m-2} x_{l,l+1}^{\frac{l-2}{2}},
\end{equation*}
for $m$ even, $2 \leq m \leq n$, so it follows that 
\begin{equation}
    \frac{L}{X(m)} = \frac{x_{12}x_{34}^c\prod_{\substack{l=4 \\ l \text{ even}}}^{n-2}x_{l,l+1}^{\frac{l-2}{2}}\prod_{\substack{l=5 \\ l \text{ odd}}}^{n-1}x_{l,l+1}^{\frac{l-3}{2}}}{\prod_{\substack{l=m+1 \\ l \text{ odd}}}^{n-1}x_{l,l+1}^{\frac{l-3}{2}}\prod_{\substack{l=4 \\ l \text{ even}}}^{m-2}x_{l,l+1}^{\frac{l-2}{2}}},
    \label{kol}
\end{equation}
for $m$ even, $2 \leq m \leq n$. \\ \\ 
Equality \eqref{kol} can be rewritten as:
\begin{equation*}
    \frac{L}{X(m)} = x_{12}x_{34}^c \frac{\prod_{\substack{l=4 \\ l \text{ even}}}^{n-2}x_{l,l+1}^{\frac{l-2}{2}}}{\prod_{\substack{l=4 \\ l \text{ even}}}^{m-2}x_{l,l+1}^{\frac{l-2}{2}}} \cdot \frac{\prod_{\substack{l=5 \\ l \text{ odd}}}^{n-1}x_{l,l+1}^{\frac{l-3}{2}}}{\prod_{\substack{l=m+1 \\ l \text{ odd}}}^{n-1}x_{l,l+1}^{\frac{l-3}{2}}}.
\end{equation*}
If $m \neq n$ and $m \neq 2,4$, we have that
\begin{equation}
    \frac{\prod_{\substack{l=4 \\ l \text{ even}}}^{n-2}x_{l,l+1}^{\frac{l-2}{2}}}{\prod_{\substack{l=4 \\ l \text{ even}}}^{m-2}x_{l,l+1}^{\frac{l-2}{2}}} = \prod_{\substack{l=m \\ l \text{ even}}}^{n-2}x_{l,l+1}^{\frac{l-2}{2}},
    \label{prva}
\end{equation}
and
\begin{equation}
\frac{\prod_{\substack{l=5 \\ l \text{ odd}}}^{n-1}x_{l,l+1}^{\frac{l-3}{2}}}{\prod_{\substack{l=m+1 \\ l \text{ odd}}}^{n-1}x_{l,l+1}^{\frac{l-3}{2}}} = \prod_{\substack{l=5 \\ l \text{ odd}}}^{m-1}x_{l,l+1}^{\frac{l-3}{2}},
\label{druga}
\end{equation}
so it follows that
\begin{equation*}
    \frac{L}{X(m)} = x_{12}x_{34}^c\prod_{\substack{l=m \\ l \text{ even}}}^{n-2}x_{l,l+1}^{\frac{l-2}{2}}\prod_{\substack{l=5 \\ l \text{ odd}}}^{m-1}x_{l,l+1}^{\frac{l-3}{2}},
\end{equation*}
where $c = 0$ for $n = 6$, and $c = 1$ for $n \geq 8$. \\ \\
Using the convention that 
\begin{equation}
    \prod_{k=a}^{b}A = 1,
    \label{conv}
    \end{equation}
    if $a < b$, one easily checks that \eqref{prva} and \eqref{druga} hold for $m \in \{n,2,4\}$ as well, which proves the theorem for even $m$.
\\ \\
Now, recall that, for odd $m$, $1 \leq m \leq n-1$, 
\begin{equation*}
    X(m) = \frac{1}{R^{(\frac{n-2}{2})^2}} x_{12}^a \prod_{\substack{l=m+2 \\ l \text{ odd}}}^{n-1}x_{l,l+1}^{\frac{l-3}{2}}\prod_{\substack{l=3 \\ l \text{ odd}}}^{m-4}x_{l,l+1}\prod_{\substack{l=4 \\ l \text{ even}}}^{m-3}x_{l,l+1}^{\frac{l-2}{2}},
    \end{equation*}
  where $a = 0$ for $m = 1$ and $m = 3$, and $a = 1$ for $m \geq 5$. \\
  \\ It follows that, for $m$ odd, $ 1 \leq m \leq n-1$, and $n > 6$,
  \begin{equation}
  \begin{split}
\frac{L}{X(m)} &= \frac{x_{12}x_{34}\prod_{\substack{l=4 \\ l \text{ even}}}^{n-2}x_{l,l+1}^{\frac{l-2}{2}}\prod_{\substack{l=5 \\ l \text{ odd}}}^{n-1}x_{l,l+1}^{\frac{l-3}{2}}}{x_{12}^a\prod_{\substack{l=m+2 \\ l \text{ odd}}}^{n-1}x_{l,l+1}^{\frac{l-3}{2}}\prod_{\substack{l=3 \\ l \text{ odd}}}^{m-4}x_{l,l+1}\prod_{\substack{l=4 \\ l \text{ even}}}^{m-3}x_{l,l+1}^{\frac{l-2}{2}}},
\end{split}
\label{kol1}
  \end{equation}
  where $a = 0$ for $m = 1$ and $m = 3$, and $a = 1$ for $m \geq 5$. \\ \\
  If $7 \leq m < n-1$, we have that
  \begin{equation*}
      \prod_{\substack{l=4 \\ l \text{ even}}}^{n-2}x_{l,l+1}^{\frac{l-2}{2}} = \prod_{\substack{l=4 \\ l \text{ even}}}^{m-3}x_{l,l+1}^{\frac{l-2}{2}}\prod_{\substack{l=m-1 \\ l \text{ even}}}^{n-2}x_{l,l+1}^{\frac{l-2}{2}},
  \end{equation*}
  and
  \begin{equation*}
      \prod_{\substack{l=5 \\ l \text{ odd}}}^{n-1}x_{l,l+1}^{\frac{l-3}{2}} = \prod_{\substack{l=5 \\ l \text{ odd}}}^{m}x_{l,l+1}^{\frac{l-3}{2}}\prod_{\substack{l=m+2 \\ l \text{ odd}}}^{n-1}x_{l,l+1}^{\frac{l-3}{2}},
  \end{equation*}
  so 
  \begin{equation}
  \begin{split}
      \frac{L}{X(m)} &= \frac{x_{34}\prod_{\substack{l=m-1 \\ l \text{ even}}}^{n-2}x_{l,l+1}^{\frac{l-2}{2}}\prod_{\substack{l=5 \\ l \text{ odd}}}^{m}x_{l,l+1}^{\frac{l-3}{2}}}{\prod_{\substack{l=3 \\ l \text{ odd}}}^{m-4}x_{l,l+1}} = \frac{\prod_{\substack{l=m-1 \\ l \text{ even}}}^{n-2}x_{l,l+1}^{\frac{l-2}{2}}\prod_{\substack{l=5 \\ l \text{ odd}}}^{m}x_{l,l+1}^{\frac{l-3}{2}}}{\prod_{\substack{l=5 \\ l \text{ odd}}}^{m-4}x_{l,l+1}} \\ &=  \frac{x_{m,m+1}^{\frac{m-3}{2}}x_{m-2,m-1}^{\frac{m-5}{2}}\prod_{\substack{l=m-1 \\ l \text{ even}}}^{n-2}x_{l,l+1}^{\frac{l-2}{2}}\prod_{\substack{l=5 \\ l \text{ odd}}}^{m-4}x_{l,l+1}^{\frac{l-3}{2}}}{\prod_{\substack{l=5 \\ l \text{ odd}}}^{m-4}x_{l,l+1}} \\ &= x_{m,m+1}^{\frac{m-3}{2}}x_{m-2,m-1}^{\frac{m-5}{2}}\prod_{\substack{l=m-1\\ l \text{ even}}}^{n-2}x_{l,l+1}^{\frac{l-2}{2}}\prod_{\substack{l=5 \\ l \text{ odd}}}^{m-4}x_{l,l+1}^{\frac{l-5}{2}},
      \end{split}
      \label{eq67}
  \end{equation}
  where, in case $m = 7$, for establishing the second and the third equality sign, we use the convention \eqref{conv}. \\ \\
For $n>6$, the quotient $\frac{L}{X(m)}$ is the same for $m=1$ and $m = 3$, and is given by
\begin{equation}
    \frac{L}{X(m)} = \frac{x_{12}x_{34}\prod_{\substack{l=4 \\ l \text{ even}}}^{n-2}x_{l,l+1}^{\frac{l-2}{2}}\prod_{\substack{l=5 \\ l \text{ odd}}}^{n-1}x_{l,l+1}^{\frac{l-3}{2}}}{\prod_{\substack{l=5 \\ l \text{ odd}}}^{n-1}x_{l,l+1}^{\frac{l-3}{2}}} = x_{12}x_{34}\prod_{\substack{l=4 \\ l \text{ even}}}^{n-2}x_{l,l+1}^{\frac{l-2}{2}}.
    \label{kol2}
\end{equation}
For $n>6$ and $m = 5$, we have that
\begin{equation}
   \frac{L} {X(m)} = \frac{x_{12}x_{34}\prod_{\substack{l=4 \\ l \text{ even}}}^{n-2}x_{l,l+1}^{\frac{l-2}{2}}\prod_{\substack{l=5 \\ l \text{ odd}}}^{n-1}x_{l,l+1}^{\frac{l-3}{2}}}{x_{12}\prod_{\substack{l=7 \\ l \text{ odd}}}^{n-1}x_{l,l+1}^{\frac{l-3}{2}}} = x_{34}x_{56}\prod_{\substack{l=4 \\ l \text{ even}}}^{n-2}x_{l,l+1}^{\frac{l-2}{2}}.
    \label{kol3}
\end{equation}
It is an easy check that 
\begin{equation}
    \frac{L}{X(m)} = x_{n-3,n-2}^{\frac{n-6}{2}}x_{n-2,n-1}^{\frac{n-4}{2}}x_{n-1,n}^{\frac{n-4}{2}}\prod_{\substack{l=5 \\ l \text{ odd}}}^{n-5}x_{l,l+1}^{\frac{l-5}{2}},
    \label{eq70}
\end{equation}
for $n>6$ and $m = n-1$. \\ \\
Now, the statement of the theorem for $m$ odd, $n > 6$, follows from \eqref{eq67}, \eqref{kol2}, \eqref{kol3} and \eqref{eq70}, and the proof for the case $m$ odd, $n = 6$, can be conducted in a similar way.
\end{proof}
\begin{example}
Let $P =(A_{1}, A_{2}, A_{3}, A_{4}, A_{5}, A_{6}, A_7, A_8)$ be a cyclic $8$-gon, whose vertices are ordered anticlockwise. Then, by Theorem \ref{impthm}, we have that the entries of the corresponding polygonal Heronian frieze satisfy the following polynomial relation:
\begin{equation*}
\begin{split}
    & x_{12}x_{34}x_{45}x_{67}^2S_{234}S_{256}S_{278}S_{356}S_{378}S_{456}S_{478}S_{578}S_{678}  \\  & \qquad \qquad -x_{12}x_{34}x_{45}x_{67}^2S_{134}S_{156}S_{178}S_{356}S_{378}S_{456}S_{478}S_{578}S_{678}  \\ & \qquad \qquad + x_{12}x_{34}x_{45}x_{67}^2S_{124}S_{256}S_{156}S_{278}S_{178}S_{456}S_{478}S_{578}S_{678} \\ & \qquad \qquad -x_{12}x_{34}x_{45}x_{67}^2S_{123}S_{156}S_{178}S_{256}S_{278}S_{356}S_{378}S_{578}S_{678}  \\ & \qquad \qquad +x_{34}x_{56}x_{45}x_{67}^2S_{346}S_{478}S_{378}S_{278}S_{678}S_{123}S_{124}S_{126}S_{178}  \\ & \qquad \qquad -x_{12}x_{34}x_{56}x_{67}^2S_{123}S_{145}S_{178}S_{245}S_{278}S_{345}S_{378}S_{478}S_{578}  \\ & \qquad \qquad +x_{56}x_{78}^2x_{67}^2S_{568}S_{123}S_{126}S_{128}S_{145}S_{245}S_{345}S_{346}S_{348}  \\ & \qquad \qquad -x_{12}x_{34}x_{56}x_{78}^2S_{123}S_{145}S_{167}S_{245}S_{267}S_{345}S_{367}S_{467}S_{567} = 0.
    \end{split}
\end{equation*}
\end{example}
\noindent
\textbf{Acknowledgement}:
This paper was written while the author was a PhD student at the University of Leeds, funded by the University of Leeds and supervised by Bethany Rose Marsh. The author is also thankful to Francesca Fedele for many helpful discussions.

\Addresses
\end{document}